\newcommand{\COLORON}{0}
\newcommand{\NOTESON}{0}
\newcommand{\Debug}{0}
\newcommand{\comment}[1]{}
\newcommand{\COMMENT}[1]{}
\definecolor{darkgray}{rgb}{0.3,0.3,0.3}
\newcommand{\defi}[1]{{\color{darkgray}\emph{#1}}}
\newcommand{\acknowledgements}{\section*{Acknowledgements}}
\newtheorem{proposition}{Proposition}[section]
\newtheorem{theorem}[proposition]{Theorem}
\newtheorem{corollary}[proposition]{Corollary}
\newtheorem{lemma}[proposition]{Lemma}
\newtheorem{conjecture}{{Conjecture}}[section]
\newtheorem{problem}[conjecture]{{Problem}}
\newtheorem{examp}[proposition]{Example}
\newcommand{\kreis}[1]{\mathaccent"7017\relax #1}
\newcommand{\FIG}{0}
\newcommand{\note}[1]{ 

\hspace*{-30pt}
	{\color{blue}  NOTE: \color{Turquoise}{\small  \tt \begin{minipage}[c]{1.1\textwidth}  #1 \end{minipage} \ignorespacesafterend }} 
	
	}
\else \newcommand{\note}[1]{} \fi
\newcommand{\afsubm}[1]{ \ifnum \Debug = 1 {\mymargin{#1}}
\fi} 
\newcommand{\fig}[1]{Figure ``{#1}''}
\else \newcommand{\fig}[1]{Figure~\ref{#1}} \fi
\renewcommand{\color}[1]{}
\newcommand{\showFig}[2]{
   \begin{figure}[htbp]
   \centering
   \noindent
   \epsfbox{#1.eps}
   \caption{\small #2}
   \label{#1}
   \end{figure}
}
\newcommand{\D}{\ensuremath{\mathbb D}}
\newcommand{\N}{\ensuremath{\mathbb N}}
\newcommand{\R}{\ensuremath{\mathbb R}}
\newcommand{\C}{\ensuremath{\mathbb C}}
\newcommand{\Z}{\ensuremath{\mathbb Z}}
\newcommand{\BS}{\ensuremath{\mathbb S}}
\newcommand{\ca}{\ensuremath{\mathcal A}}
\newcommand{\cb}{\ensuremath{\mathcal B}}
\newcommand{\cc}{\ensuremath{\mathcal C}}
\newcommand{\cs}{\ensuremath{\mathcal S}}
\newcommand{\cu}{\ensuremath{\mathcal U}}
\newcommand{\cv}{\ensuremath{\mathcal V}}
\newcommand{\oo}{\ensuremath{\omega}}
\newcommand{\OO}{\ensuremath{\Omega}}
\newcommand{\sig}{\ensuremath{\sigma}}
\newcommand{\sm}{\backslash}
\newcommand{\isom}{\cong}
\newcommand{\cls}[1]{\ensuremath{\overline{#1}}}
\newcommand{\act}{\curvearrowright}
\DeclareRobustCommand{\cev}[1]{%
  \mathpalette\do@cev{#1}%
}
\newcommand{\do@cev}[2]{%
  \fix@cev{#1}{+}%
  \reflectbox{$\m@th#1\vec{\reflectbox{$\fix@cev{#1}{-}\m@th#1#2\fix@cev{#1}{+}$}}$}%
  \fix@cev{#1}{-}%
}
\newcommand{\fix@cev}[2]{%
  \ifx#1\displaystyle
    \mkern#23mu
  \else
    \ifx#1\textstyle
      \mkern#23mu
    \else
      \ifx#1\scriptstyle
        \mkern#22mu
      \else
        \mkern#22mu
      \fi
    \fi
  \fi
}
\newcommand{\pth}[2]{\ensuremath{#1}\text{--}\ensuremath{#2}~path}
\newcommand{\pths}[2]{\ensuremath{#1}\text{--}\ensuremath{#2}~paths}
\newcommand{\arc}[2]{\ensuremath{#1}\text{--}\ensuremath{#2}~arc}
\newcommand{\seq}[1]{\ensuremath{(#1_n)_{n\in\N}}} 
\newcommand{\g}{\ensuremath{G\ }}
\newcommand{\G}{\ensuremath{G}}
\newcommand{\Lr}[1]{Lemma~\ref{#1}}
\newcommand{\Lrs}[1]{Lemmas~\ref{#1}}
\newcommand{\Tr}[1]{Theorem~\ref{#1}}
\newcommand{\Trs}[1]{Theorems~\ref{#1}}
\newcommand{\Sr}[1]{Section~\ref{#1}}
\newcommand{\Prr}[1]{Pro\-position~\ref{#1}}
\newcommand{\Cr}[1]{Corollary~\ref{#1}}
\newcommand{\lf}{locally finite}
\newcommand{\fg}{finite graph}
\newcommand{\Cg}{Cayley graph}
\renewcommand{\iff}{if and only if}
\newcommand{\fe}{for every}
\newcommand{\Fe}{For every}
\newcommand{\st}{such that}
\newcommand{\ti}{there is}
\newcommand{\obda}{without loss of generality}
\newcommand{\wrt}{with respect to}
\newcommand{\FC}{Freudenthal compactification}
\newcommand{\labtequ}[2]{
 \begin{equation} \label{#1} 	\begin{minipage}[c]{0.9\textwidth}  #2 \end{minipage} \ignorespacesafterend \end{equation} }
\newcommand{\mymargin}[1]{
 \ifnum \Debug = 1
  \marginpar{%
    \begin{minipage}{\marginparwidth}\small%
      \begin{flushleft}%
        {\color{blue}#1}%
      \end{flushleft}%
   \end{minipage}%
  }%
 \fi
}%
\newcommand{\mySection}[2]{}
\newcommand{\pd}{properly discontinuous}
\newcommand{\PF}{\ensuremath{\mathcal P}}
\newcommand{\SF}{\ensuremath{\mathcal S}}
\newcommand{\RF}{\ensuremath{\mathcal R}}
\newcommand{\cova}{covariant}
\newcommand{\covaly}{\cova ly}
\renewcommand{\fg}{finitely generated}
\begin{document}

\title{On planar Cayley graphs and Kleinian groups}

\author{Agelos Georgakopoulos\thanks{Supported by the European Research Council (ERC) under the European Union's Horizon 2020 research and innovation programme (grant agreement No 639046).}
\medskip 
\\
  {Mathematics Institute}\\
 {University of Warwick}\\
  {CV4 7AL, UK}\\
}

\date{\today}
\maketitle

\begin{abstract}



Let $G$ be a finitely generated group acting faithfully and properly discontinuously by homeomorphisms on a planar surface $X \subseteq \BS^2$. We prove that $G$ admits such an action that is in addition co-compact, provided we can replace $X$ by another surface $Y \subseteq \BS^2$. 

We also prove that if a group $H$ has a finitely generated Cayley\\ (multi-)graph $C$ covariantly embeddable in $\BS^2$, 
then $C$ can be chosen so as to have no infinite path on the boundary of a face. 

The proofs of these facts are intertwined, and the classes of groups they define coincide. In the orientation-preserving case they are exactly the (isomorphism types of) finitely generated Kleinian function groups. We construct a finitely generated  planar Cayley graph whose group is not in this class.

In passing, we observe that the Freudenthal compactification of every planar surface is homeomorphic to the sphere.

\end{abstract}

\noindent
{\bf 2010 Mathematics Subject Classification:} 05C10, 57M60, 57M07, 57M15.\\ 
{\bf Keywords:} Planar Cayley graphs, covariant embedding, Kleinian groups, properly discontinuous actions, planar surface, Freudenthal compactification.

\section{Introduction}
\comment{
The study of discrete groups of isometries, or M\"obius transformations, of $\R^2$ and $\mathbb H^2$ is classic. It is at the heart of Klein's Erlangen program \cite{Klein} of understanding geometries by studying groups acting on them. Poincar\'e's  contributions to the topic are also famous; quoting \cite{ChGhLeSci},\\ {\it ``... a kind of apotheosis of 19th century mathematics, a meeting place for group theory, complex analysis, elliptic and modular functions, differential equations, Riemann surfaces, hyperbolic geomentry and quadratic forms ...''.}\\ 
According to \cite{ChGhLeSci} again, these groups ``...\ are tools used constantly today in various areas of mathematics and even in physics ...''. For example, they play an important role in the study of 3-manifolds \cite{Maskit,ThuThr,Thurston}. 
The groups alluded to here are generally called \defi{Kleinian} groups, 
although the precise  meaning of the term differs slightly depending on the author and era.
Some survey material and many further references can be found e.g.\ in \cite{LyndonSchupp,Marden,Maskit,Ohshika,SerCra,ZVC}.
}
The study of discrete groups of isometries, or M\"obius transformations, of $\R^2$ and $\mathbb H^2$ is classic. It is at the heart of Klein's Erlangen program \cite{Klein} as well as some of Poincar\'e's  most famous work \cite{ChGhLeSci}. According to \cite{ChGhLeSci}, these groups ``...\ are tools used constantly today in various areas of mathematics and even in physics ...''. For example, they play an important role in the study of 3-manifolds \cite{Maskit,ThuThr,Thurston}. 
The groups alluded to here are generally called \defi{Kleinian} groups, 
although the precise  meaning of the term differs slightly depending on the author and era.
Some survey material and many further references can be found e.g.\ in \cite{LyndonSchupp,Marden,Maskit,Ohshika,SerCra}.

A well-studied sub-family of the Kleinian groups are the \defi{function groups}, defined by the existence of an invariant component in their domain of discontinuity, see \Sr{sec Klein} for definitions, and \cite{MarGeo,Marden,MasCla,Maskit} for some literature. In this paper we show that every \fg\ function group is isomorphic to a group acting faithfully, \pd ly, and co-compactly on a planar surface. 

Levinson \& Maskit  \cite{LevMasSpe} proved that these groups are exactly the ones admitting a \fg\  \Cg\ that embeds in $\BS^2$ with a fixed cyclic ordering of the labels of the edges around each vertex. Thus function groups form a subfamily (proper, as shown in this paper) of the \defi{planar groups}, i.e.\ the groups having planar \Cg s, which are studied in recent work by the author and others \cite{ArzChe,DroInf,DrSeSeCon, DunPla, cayley3, planarPresI, planarPresII,KroInf,mohTre,ThoTil,TucFin}.
We will prove that each \fg\ function group admits a planar \Cg\ having no infinite facial path, by possibly allowing loops and parallel edges. This was of interest to the author since \cite{cayley3}, which provides constructions of 3-connected planar \Cg s in which no face is bounded by a finite cycle, previously thought to be impossible. These results apply more generally to groups containing orientation-reversing elements.

\comment{
\begin{theorem}\label{thm pd}
A finitely generated group has a \cova\ plane, finitely generated \Cg, \iff\ it admits a faithful, \pd\  action by homeomorphisms on a 2-manifold contained in the sphere $\BS^2$.

Moreover, this 2-manifold can be assumed to be the sphere, the plane $\R^2$, the open annulus, or the Cantor sphere, depending on whether the group has 0, 1, 2, or infinitely many ends, respectively. In addition, the action can be assumed to be co-compact.
\end{theorem} 

In other words:
}

\medskip
An embedding $\sigma: G \to \BS^2$ of a \Cg\ $G$ of a group $\Gamma$ is \defi{\cova}, if the canonical action of $\Gamma$ on $G$ maps every facial path onto a facial path; see \Sr{sec defs} for more precise definitions. (This is equivalent to saying that the canonical action $\Gamma \act \sigma(G)$ extends into an action $\Gamma \act \BS^2$ by homeomorphisms, see \Cr{cor hom S}.)  

A \defi{planar surface} is a connected 2-manifold homeomorphic to an (open) subset of the sphere $\BS^2$. Our main result is
\begin{theorem}\label{tfae}
For a finitely generated group $\Gamma$, the following are equivalent:
\begin{enumerate}[label=(\Alph*)]
\item \label{T i} $\Gamma$ admits a faithful, \pd\  action by homeomorphisms on a planar surface;
\item \label{T ii} $\Gamma$ has a \Cg\ admitting a \cova\ embedding; 
\item \label{T iv} $\Gamma$ has a Cayley multi-graph (see \Sr{sec GA}) admitting a \cova\ embedding every facial path of which is finite; 
\item \label{T iii} $\Gamma$ admits a faithful, \pd, {\bf \emph{co-compact}}  action by homeomorphisms on  the sphere, the plane $\R^2$, the open annulus, or the Cantor sphere.
\end{enumerate}
\end{theorem}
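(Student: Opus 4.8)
The plan is to run the cyclic chain of implications (D)$\Rightarrow$(A)$\Rightarrow$(B)$\Rightarrow$(C)$\Rightarrow$(D). The surface-replacement phenomenon advertised in the abstract then falls out for free: nothing in the cycle preserves the surface we start from, and the co-compact action delivered at the end lives on one of the four canonical surfaces rather than on the original $X$. The implication (D)$\Rightarrow$(A) is immediate, since the sphere, the plane, the open annulus and the Cantor sphere are all (homeomorphic to) open subsets of $\BS^2$ and hence planar surfaces, while a co-compact action is in particular \pd; so we simply forget co-compactness.

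For (A)$\Rightarrow$(B) I would first pass to the \FC\ of $X$, which is homeomorphic to $\BS^2$ by the observation recorded in the abstract; the action extends over $|X|\isom\BS^2$ because homeomorphisms permute ends, so $\Gamma$ acts on the end-space as well. To manufacture an actual \Cg, whose vertex set must carry a free transitive $\Gamma$-action, I would choose a fundamental domain $D$ for the action — one shows a \pd\ action on a surface admits a suitably tame, locally finite fundamental domain — whose translates $\{gD\}$ tile $X$ and are permuted freely and transitively by $\Gamma$. Placing one vertex in the interior of each tile and one edge across each shared wall yields the dual graph of the tiling; its side-pairing transformations generate $\Gamma$, and since the tiles correspond bijectively to group elements, this dual graph is a \Cg\ of $\Gamma$. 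The tiling is $\Gamma$-equivariant, so the induced embedding in $X\subseteq\BS^2$ is \cova, and by \Cr{cor hom S} this \cova nce is precisely the statement that the canonical action extends over $\BS^2$, which is how I would package the conclusion.

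The heart of the matter, and the step I expect to be the main obstacle, is (B)$\Rightarrow$(C): upgrading a \covaly\ embedded \Cg\ to a \covaly\ embedded Cayley multi-graph all of whose facial paths are finite. An infinite facial path bounds a face that escapes to an end of $G$, and the task is to insert, $\Gamma$-equivariantly and without destroying planarity, new edges — equivalently new generators, which is exactly why a multi-graph (with loops and parallel edges) must be permitted — that chop every such face into finitely many finite faces while creating no fresh infinite face. Controlling the possible shapes of infinite faces requires the topological machinery assembled above: arcs and circles in $|G|$, the description of finite cuts via $\cls{X}\cap\cls{Y}$, and in particular the subsequence lemma for topological paths, which lets one extract from a sequence of facial paths a limiting topological path and thereby pin down how a face can approach the ends. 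This is the construction underlying the paper's second advertised result — that the \Cg\ can be chosen with no infinite facial path — and I expect (A)$\Rightarrow$(B) and (B)$\Rightarrow$(C) to be genuinely intertwined rather than cleanly separable.

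Finally, for (C)$\Rightarrow$(D): the \cova\ embedding again yields, through \Cr{cor hom S}, an action $\Gamma\act\BS^2$, and I would set $X:=\BS^2\setminus\Lambda$, where $\Lambda$ is the $\Gamma$-invariant set of limit points, i.e.\ the image of the end-space of the embedded multi-graph. The action on $X$ is \pd\ because the limit set has been deleted, faithful because the action on the Cayley multi-graph is faithful, and co-compact precisely because all facial paths are finite: the quotient $X/\Gamma$ is assembled from the finite quotient of the $1$-skeleton together with finitely many orbits of finite polygonal faces, hence is compact. The Freudenthal--Hopf theorem on the number of ends then identifies $\Lambda$ as empty, a single point, two points, or a Cantor set according as $\Gamma$ has $0$, $1$, $2$, or infinitely many ends, so that $X$ is the sphere, the plane $\R^2$, the open annulus, or the Cantor sphere respectively, closing the cycle.
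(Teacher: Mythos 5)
Your cyclic scheme (D)$\Rightarrow$(A)$\Rightarrow$(B)$\Rightarrow$(C)$\Rightarrow$(D) would in principle establish the equivalence, and your (D)$\Rightarrow$(A) and the identification of the four surfaces via Freudenthal--Hopf at the end are fine. But two of your three nontrivial steps have genuine gaps. For (A)$\Rightarrow$(B) you posit a ``suitably tame, locally finite fundamental domain whose translates tile $X$'' and take the dual graph. This is exactly the move that fails here: the action is only by homeomorphisms (not isometries), may have torsion, and may live on the Cantor sphere, and in that generality there is no reason such a tiling exists --- producing one is not easier than the theorem itself. The paper's proof of \ref{T i} $\to$ \ref{T ii} (\Lr{lem vapf}) deliberately avoids fundamental domains of the \emph{action}: it picks a regular orbit (\Lr{reg orb}), joins a basepoint to its $S$-translates by arcs, uses proper discontinuity only to bound the number of arc crossings, treats crossings as new vertices (blowing up those fixed by involutions so the action on the auxiliary graph becomes free), and then applies Babai's contraction lemma (\Lr{babai}) to a fundamental domain of the \emph{graph}. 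Freeness and transitivity on the vertex set --- which you assert for your tiles --- are precisely what this detour manufactures.

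The second gap is (B)$\Rightarrow$(C). You correctly identify that one must insert new edges $\Gamma$-equivariantly to chop the infinite faces, but you do not address why \emph{finitely many} new generators suffice; this is the crux, and it rests on Droms' theorem that planar groups are finitely presented together with Dunwoody's accessibility theorem, packaged as \Lr{cutspace} (a finite set of cuts whose $\Gamma$-translates generate the cut space). The topological lemmas you invoke instead (the subsequence lemma for topological paths, the closure criterion for finite cuts) do not produce this finiteness. Moreover, the paper's proof of \ref{T ii} $\to$ \ref{T iv} (\Lr{lem fin fac}) essentially requires the \pd\ action on $\BS^2 \sm \sigma(\OO(G))$ supplied by \Lr{pd ext}, i.e.\ the content of \ref{T ii} $\to$ \ref{T iii} --- the author states explicitly that no proof avoiding this is known --- so in your ordering (B)$\Rightarrow$(C) secretly presupposes (B)$\Rightarrow$(D), including its hard general case with infinite face boundaries (the ``shadow'' construction), which your easy (C)$\Rightarrow$(D) step does not cover. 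A smaller point: in (C)$\Rightarrow$(D), proper discontinuity does not follow merely ``because the limit set has been deleted''; one still needs the covering argument of \Lr{cover pd} (and the coherent choice of face homeomorphisms in \Cr{cor hom S} to make $g\mapsto h_g$ a homomorphism), though in the all-faces-finite case this is the manageable special case of \Lr{pd ext}.
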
 

In the orientation preserving case, the groups of \Tr{tfae} coincide, as abstract groups, with the Kleinian function groups mentioned above:
\begin{corollary}\label{Kleinian}
A finitely generated group $\Gamma$ admits a faithful, \pd\ (co-compact) action by orientation-preserving homeomorphisms on a planar surface \iff\ it is isomorphic to a Kleinian function group. 
\end{corollary}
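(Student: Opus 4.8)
The plan is to deduce the corollary from \Tr{tfae} together with the combinatorial characterisation of function groups due to Levinson and Maskit \cite{LevMasSpe}. That result states that a \fg\ group is isomorphic to a Kleinian function group precisely when it admits a \fg\ \Cg\ that embeds in $\BS^2$ with a fixed cyclic ordering of the edge-labels around every vertex. My first step is to observe that this last condition is exactly the orientation-preserving instance of condition~\ref{T ii} of \Tr{tfae}: a \cova\ embedding in which \emph{every} group element preserves the prescribed rotation system is precisely one whose extension to $\BS^2$ (guaranteed by \Cr{cor hom S}) can be taken orientation-preserving. Conversely, if $\Gamma$ has an orientation-reversing element, then the induced homeomorphism reverses the cyclic order of the edge-labels at some vertex, so the two notions match up cleanly.

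Next I would revisit the proof of \Tr{tfae} and verify that orientation is respected throughout each implication, so that the cycle of equivalences descends to the subclass of orientation-preserving actions and embeddings. Concretely: in passing from \ref{T i} to \ref{T ii}, the embedding extracted from a faithful, \pd\ action by orientation-preserving homeomorphisms inherits a globally consistent rotation system and is therefore orientation-preserving \cova; in passing from \ref{T ii} through \ref{T iv} to \ref{T iii}, the canonical \Cg\ action and its extension to the relevant model surface can be chosen orientation-preserving whenever the rotation system is preserved by all of $\Gamma$; and in passing back from the co-compact action on one of the four model surfaces to \ref{T i}, orientation is manifestly retained. The upshot is an orientation-preserving refinement of \Tr{tfae} in which \ref{T i}, \ref{T ii}, \ref{T iv} and the co-compact \ref{T iii} remain equivalent, all of the homeomorphisms involved now being orientation-preserving.

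Combining the two ingredients then finishes the proof. By Levinson--Maskit, $\Gamma$ is isomorphic to a Kleinian function group if and only if it satisfies the orientation-preserving form of \ref{T ii}; by the orientation-preserving refinement of \Tr{tfae} this is equivalent to the orientation-preserving form of \ref{T i}, which is exactly the assertion that $\Gamma$ acts faithfully and \pd ly\ by orientation-preserving homeomorphisms on a planar surface. The parenthetical co-compactness is obtained for free from the equivalence with \ref{T iii}, where the action is realised on the sphere, the plane $\R^2$, the open annulus, or the Cantor sphere.

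I expect the main obstacle to lie in the bookkeeping of the second step: one must confirm that orientation-preservation genuinely survives each construction used in \Tr{tfae}, in particular that the rotation system read off from an orientation-preserving action is \emph{globally} invariant and not merely invariant up to a reflection at individual vertices, and that no reflection is silently introduced when collapsing the action to one of the four canonical surfaces. A secondary subtlety is aligning the defining property of a function group --- the existence of an invariant component of the domain of discontinuity --- with the Levinson--Maskit rotation-system criterion; this is precisely where the hypothesis that the action be orientation-preserving is indispensable, since Kleinian function groups consist by definition of orientation-preserving M\"obius transformations.
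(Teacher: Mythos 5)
Your argument is essentially sound, but it is a genuinely different route from the one the paper takes --- in fact it is precisely the derivation that the author mentions and deliberately declines in the paragraph following the statement of \Cr{Kleinian}: there it is noted that the corollary ``can be deduced from \cite[THEOREM~4]{LevMasSpe}'', and that the paper will instead give ``an alternative proof bypassing Ahlfors' finiteness theorem.'' Your proof imports the Levinson--Maskit characterisation of function groups via rotation systems and then reduces everything to an orientation-preserving refinement of the equivalence \ref{T i} $\leftrightarrow$ \ref{T ii}; the bookkeeping you flag as the main obstacle is indeed nontrivial but is largely supported by the ``furthermore'' clauses of \Lr{lem vapf} (orientation-preservation passes from the action to the embedding) and by the fact that a $\Gamma$-invariant rotation system forces the extensions $h_g$ of \Cr{cor hom S} to preserve orientation. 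The paper's own forward direction instead runs \ref{T i} $\to$ \ref{T iv} to get a \cova, orientation-preserving embedding with all faces bounded by cycles, then invokes the final statement of \Lr{pd ext} to produce a co-compact \pd\ action on $Y=\BS^2\sm\sigma(\OO(G))$ whose non-free locus $Z$ meets each face in at most one point; restricting to $Y\sm Z$ gives a free action, hence a regular covering of the topologically finite surface $(Y\sm Z)/\Gamma$, to which Maskit's planarity theorem (\Tr{thm Maskit}) applies to conclude that $\Gamma$ is a Koebe group. The trade-off: your route is shorter and treats the equivalence more symmetrically, but it rests on \cite[THEOREM~4]{LevMasSpe} and hence on Ahlfors' finiteness theorem; the paper's route is longer but self-contained modulo Maskit's covering theorem, and it is exactly what makes the strengthening in \Cr{cor Maskit} (dropping topological finiteness) fall out as a byproduct. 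Also note that the paper's backward direction is more elementary than yours: a function group acts faithfully, \pd ly and orientation-preservingly on an invariant component of its domain of discontinuity directly, with no need to pass back through \ref{T ii}.
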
 
\Cr{Kleinian} can be deduced from \cite[THEOREM 4]{LevMasSpe}, which essentially says that the groups of \ref{T ii} coincide in the orientation-preserving case with the Kleinian function groups. We will give an alternative proof bypassing Ahlfors' finiteness theorem. 

Our proof of \Cr{Kleinian} makes use of a classical theorem of Maskit, saying that if $p:\tilde{S} \to S$ is a regular covering of a topologically finite surface $S$, where $\tilde{S} $ is planar, then the group of deck transformations is isomorphic to a function group. By exploiting the equivalence of \ref{T i} and  \ref{T iii} of \Tr{tfae} we can strengthen this statement by dropping the topological finiteness condition (\Cr{cor Maskit}); I do not know if this result is knew. (Maskit's formulation however is stronger than the one above, and it is not possible to strengthen it this way, see \Sr{proof} for details.)


In \Cr{Kleinian} we cannot replace `Kleinian function group' by just `Kleinian group': \Sr{sec rels} provides an explicit example  of a finitely generated Kleinian group that does not admit a planar \Cg\ (in fact most Kleinian groups have this property).

A description of the isomorphism types of Kleinian function groups in terms of fundamental groups of graphs of groups with simpler building blocks can be found in \cite{LevMasSpe}. Dunwoody \cite{DunPla} extends this to groups with planar \Cg s. 

\medskip
I suspect that \Cr{Kleinian} extends to the orientation reversing case, by using orientation reversing Kleinian groups, i.e.\  discrete subgroups of $PSL(2, \C).2$.
In support of this conjecture, we will also prove the following (in \Sr{sec R3}): 
\begin{theorem}\label{act R3}
Every group $\Gamma$ as in \Tr{tfae} admits a faithful \pd\ action on $\R^3$.
\end{theorem} 

We show, in \Sr{sec rels}, that the family of such groups is 
a proper subfamily of the groups admitting a planar \Cg.

I suspect that the restriction of $\Gamma$ being finitely generated can be dropped in all the above, except that in \ref{T iii} we would have to extend the list of possible surfaces; see the remark in \Sr{sketch}. Faithfulness is however necessary e.g.\ for the implication \ref{T i} $\to$ \ref{T ii}.

The groups of \Tr{tfae} can be described by a certain kind of group presentation, which allows them to be effectively enumerated \cite{planarPresI}. Thus we also obtain an effective enumeration of the isomorphism types of Kleinian function  groups.

\medskip
I do not know a proof of the implication \ref{T ii} $\to$ \ref{T iv} of \Tr{tfae} that does not go via \ref{T iii}.
I also do not know a proof of the implication \ref{T i} $\to$ \ref{T iii} that does not go via \ref{T ii}. However, if we relax \ref{T iii} by not requiring co-compactness, then it follows from the following more general and perhaps well-known  statement (\Sr{X}):
\begin{proposition}\label{ext pd i}
Let $\Gamma \act X$ be a \pd\ action on a metrizable, arc-connected, locally compact space $X$. Then the canonical extension $\Gamma \act (X \cup \OO^V(X))$ of the action to the non-accumulation ends $\OO^V(X)$ of $X$ is \pd.
\end{proposition}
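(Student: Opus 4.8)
The plan is to realise the extended space as a subspace of the \FC\ $|X|=X\cup\OO(X)$ of $X$. Every $g\in\Gamma$ is a self-homeomorphism of the locally compact space $X$, hence extends uniquely to a self-homeomorphism of $|X|$ permuting the ends, giving an action $\Gamma\act|X|$ by homeomorphisms. Since a homeomorphism of $|X|$ carries orbits to orbits, the property of being a non-accumulation end is $\Gamma$-invariant, so $\OO^V(X)$ is $\Gamma$-invariant and the action restricts to the subspace $Y:=X\cup\OO^V(X)$, which is Hausdorff as a subspace of $|X|$ and contains $X$ as an open, $\Gamma$-invariant subset. I would then verify proper discontinuity in its local form: every $y\in Y$ has an open neighbourhood $U$ with $gU\cap U=\emptyset$ for all but finitely many $g\in\Gamma$.

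For $y=x\in X$ this is immediate: since $X$ is open and $\Gamma$-invariant in $Y$, local compactness together with the proper discontinuity of $\Gamma\act X$ supplies a relatively compact neighbourhood $U\subseteq X$ of $x$ meeting only finitely many of its own translates, and $gU\cap U$ is computed inside $X$. The substance is at an end $\omega\in\OO^V(X)$. A basic neighbourhood of $\omega$ in $Y$ has the form $\hat C$, where $C$ is the component of $X\setminus K$ with $\omega\in\overline C$ for some compact $K$ (whose frontier $\partial C$ then lies in $K$, by the standard theory of ends); since $X$ is dense in $|X|$, one has $g\hat C\cap\hat C\neq\emptyset$ exactly when $gC\cap C\neq\emptyset$. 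The clean reduction I would isolate is that it suffices to find such a $K$ whose whole orbit avoids $C$, i.e. with $\Gamma K\cap C=\emptyset$. Indeed, if $gC\cap C\neq\emptyset$ then, as both $gK$ and $g^{-1}K$ lie in $\Gamma K$ and hence miss $C$, the connected set $C\subseteq X\setminus gK$ meets the component $gC$, forcing $C\subseteq gC$, and symmetrically $gC\subseteq C$; thus $gC=C$. Consequently $\{g:g\hat C\cap\hat C\neq\emptyset\}$ is contained in the set-stabiliser $\{g:gC=C\}$, and this is finite: any such $g$ maps the nonempty frontier $\partial C$ onto itself, so $\partial C=g\,\partial C\subseteq K\cap gK$, placing $g$ in the finite set $\{g:gK\cap K\neq\emptyset\}$ furnished by proper discontinuity of $\Gamma\act X$.

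The crux is therefore a single lemma: the existence of a \emph{wandering core}, a compact $K$ separating $\omega$ (so $\omega\in\overline C$ for a component $C$ of $X\setminus K$) with $\Gamma K\cap C=\emptyset$. This is where the non-accumulation hypothesis on $\omega$ must be spent, and I expect it to be the main obstacle. The difficulty is genuine: a neighbourhood $C$ of an end is never compact, so one cannot simply apply the proper (equivalently, properly discontinuous) action to two compact sets; worse, $C$ could a priori contain other ends at which some orbit accumulates, and even one such end would drive infinitely many translates $gK$ into $C$. I would attack it in two stages. First, show that the accumulation ends form a closed subset of $\OO(X)$ — they are $\overline{\Gamma x}\cap\OO(X)$ for the relevant points, and $\overline{\Gamma x}$ is closed in the compact space $|X|$ — so that $\omega$, being a non-accumulation end, has a neighbourhood containing no accumulation end at all. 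Second, within such a neighbourhood, upgrade the point-orbit statement ``no $\Gamma x$ accumulates at $\omega$'' to the compact-core statement ``$\Gamma K$ does not accumulate at $\omega$'', using the compactness of $K$ and of $|X|$ to pass from sequences $g_nk_n\to\omega$ (with $k_n\to k\in K$) to a contradiction with proper discontinuity. This point-to-compact upgrade is the delicate step, and it is conceivable that the paper's definition of $\OO^V(X)$ already encodes the wandering core directly, in which case this lemma is immediate and the proposition reduces entirely to the reduction of the previous paragraph; if instead $\OO^V(X)$ is defined through orbits of single points, the upgrade above is the heart of the matter.
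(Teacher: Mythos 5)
Your reduction at a single end is sound as far as it goes, and you have correctly located the two load-bearing facts: that the accumulation ends form a closed subset of $\Omega(X)$, and the ``point-to-compact upgrade'', which is precisely Proposition~\ref{def acc end} of the paper (proved there by joining $K$ to $L$ with an arc and invoking definition \ref{pd ii} of proper discontinuity --- this is where arc-connectedness is spent). But there are two genuine gaps. The more serious one is that you verify the wrong condition: you check only the diagonal, one-point form of proper discontinuity (``every $y$ has a neighbourhood $U$ with $gU\cap U=\emptyset$ for almost all $g$''). That condition is strictly weaker than the definitions \ref{pd i}--\ref{pd iii} used in the paper; for instance $\mathbb{Z}$ acting on $\mathbb{R}^2\setminus\{0\}$ by $(x,y)\mapsto(2x,y/2)$ satisfies it while violating \ref{pd i}. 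What must be verified is the two-point condition \ref{pd iii} for \emph{every} pair of points of $X\cup\Omega^V(X)$, and the cases you omit are exactly the nontrivial mixed ones: a point of $X$ against an end, and two distinct ends. (For $x\in X$ and an end $\omega$ with neighbourhood $\hat C$ you must rule out $g^{-1}x\in C$ for infinitely many $g$; this again needs that $\overline{\hat C}$ contains no accumulation end, plus compactness of $|X|$ and Proposition~\ref{def acc end}.) The paper's proof is organised around \ref{pd iii} from the outset and treats all pairs uniformly, by producing for each of $x$ and $y$ a neighbourhood base cut out by compact connected separators and playing the translated separators $gK^x_i$ against $K^y_j$ via Lemma~\ref{nested}.

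The second gap you acknowledge yourself: the ``wandering core'' lemma is left unproved. Two remarks on it. As stated ($\Gamma K\cap C=\emptyset$) it is stronger than needed and not obviously attainable by your two-stage plan, since enlarging $K$ to absorb the finitely many offending translates changes $K$ and hence its orbit; what you actually need, and what your plan does deliver, is that $gK\cap C\neq\emptyset$ for only finitely many $g$, which suffices for your stabiliser argument after discarding those finitely many $g$. Also, your ``stage 2'' is not a routine compactness argument: passing from $g_nk_n\to\omega$ to an accumulating point-orbit requires connecting $k_n$ to a fixed point inside a compact connected set and applying proper discontinuity against the compact set defining a basic neighbourhood of $\omega$ --- i.e.\ it amounts to reproving Proposition~\ref{def acc end}. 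In summary the route is viable and genuinely different from the paper's (wandering neighbourhoods of ends versus nested separators), but as written it neither establishes proper discontinuity in the form the paper defines it nor completes its own key lemma.
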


We remark that although any action $\Gamma \act X$ as in \Tr{tfae} \ref{T iii} defines a quotient orbifold $O:= X/\Gamma$, understanding these groups goes beyond understanding 2-orbifold fundamental groups, because $X$ is not simply connected in the cases we are most interested in, see \Sr{sec orb}.

\medskip
Any action $\Gamma\act X$ as in \ref{T iii} can be `geometrised', to turn it into an action by isometries on a smooth manifold homeomorphic to one of those four spaces, see \Sr{proof}.

\medskip 
In passing, we observe the following purely topological statement (\Sr{sec back})
\begin{corollary} \label{cor FX i} 
Let $X \subseteq \BS^2$ be a 2-manifold. Then the \FC\ of $X$ is homeomorphic to $\BS^2$.
\end{corollary}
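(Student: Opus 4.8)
The plan is to realise $|X|$, the \FC\ of $X$, as a quotient of the sphere and then to invoke a classical decomposition theorem. Being a $2$-manifold embedded in the $2$-manifold $\BS^2$, the set $X$ is open in $\BS^2$ by invariance of domain; write $K := \BS^2 \sm X$, a compact set. If $K = \emptyset$ then $X = \BS^2$ is already compact and there is nothing to prove, so assume $K \neq \emptyset$. The first step is to record that no connected component of $K$ separates $\BS^2$: if a component $K_0$ separated $\BS^2$, then, $X$ being connected, it would lie in a single component of $\BS^2 \sm K_0$; picking any other component $V$ we would get $V \subseteq K$, so that $\cls{V} \cup K_0$ is a connected subset of $K$ strictly larger than $K_0$ (it contains the nonempty set $V$ disjoint from $K_0$), contradicting the maximality of $K_0$. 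Thus every component of $K$ is a non-separating subcontinuum of $\BS^2$.

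Next I would form the decomposition $\cd$ of $\BS^2$ whose elements are the connected components of $K$ together with the singletons $\{x\}$, $x \in X$, and check that $\cd$ is upper semicontinuous. On $X$ this is immediate, since $X$ is open and every subset of $X$ is $\cd$-saturated; at a component $K_0 \subseteq K$ one uses that in the compact set $K$ components coincide with quasicomponents, so any open $U \supseteq K_0$ contains a relatively clopen piece $W$ of $K$ with $K_0 \subseteq W$, which can be thickened to a saturated open neighbourhood of $K_0$ inside $U$. Since each element of $\cd$ is a non-separating continuum (points trivially, the $K$-components by the previous paragraph), Moore's decomposition theorem applies and yields $\BS^2/\cd \isom \BS^2$. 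This is the step carrying the whole topological content, and it is what forces the answer to be the sphere rather than another surface.

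It then remains to identify $\BS^2/\cd$ with $|X|$. The quotient map $q$ embeds $X$ as an open subset of $\BS^2/\cd$, and this image is dense: if some component $K_0$ had a saturated open neighbourhood $V$ disjoint from $X$, then $V \subseteq K$, and the component of $V$ meeting $K_0$ would be an open connected subset of $K$ equal to $K_0$, making $K_0$ clopen in $\BS^2$ and hence all of $\BS^2$ — absurd. Thus $\BS^2/\cd$ is a compactification of $X$ whose remainder is the (totally disconnected) space of components of $K$. To pin it down as the \FC\ I would build the comparison map directly: to an end $\varepsilon = (D_n)_n$ of $X$, represented by a nested sequence of components $D_n$ of $X \sm C_n$ for an exhaustion $(C_n)$ of $X$ by compacta with $C_n \subseteq \mathrm{int}\,C_{n+1}$, assign the subcontinuum $\bigcap_n \cls{D_n} \subseteq K$ and thence the unique component of $K$ containing it. I would show this induces a continuous bijection $|X| \to \BS^2/\cd$, and conclude it is a homeomorphism because $|X|$ is compact and $\BS^2/\cd$ is Hausdorff (a continuous bijection from a compact space to a Hausdorff space being a homeomorphism). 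Combined with Moore's theorem this gives $|X| \isom \BS^2/\cd \isom \BS^2$.

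The main obstacle is this last identification, and specifically the bijectivity of the correspondence between ends of $X$ and components of $K$. Surjectivity follows from density together with a diagonal/compactness argument producing, for each component $K_0$, an end whose limit continuum lies in $K_0$. The delicate direction is injectivity: one must rule out two distinct ends accumulating at the same $K_0$, and this is exactly where non-separation is used — since $\BS^2 \sm K_0$ is connected, a neighbourhood basis of $K_0$ meets $X$ in sets that are eventually connected, so only one end can converge to $K_0$. Making this precise in the absence of any local connectedness of $K$ is the technical heart of the argument; everything else is bookkeeping around Moore's theorem and the defining property of the \FC.
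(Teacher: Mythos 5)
Your argument takes a genuinely different route from the paper's. The paper first proves (\Lr{Freu X}) that $|X|$ embeds in $\BS^2$, by triangulating $X$ via Rad\"o's theorem, refining the $1$-skeleton to a $3$-connected locally finite planar graph, and invoking the Thomassen--Richter embedding theorem (\Lr{FG embeds}); the corollary then follows by showing that embedding is surjective, using \Lr{scc} (faces of compact $2$-connected locally connected subsets of $\BS^2$ are bounded by simple closed curves): a missing point would produce a face whose boundary circle contains a manifold point of $X$, which is absurd. Your proof via Moore's decomposition theorem avoids triangulations and the graph-theoretic machinery altogether, and gives a cleaner conceptual picture: $|X|$ is the sphere with each complementary component collapsed to a point. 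What it does not yield is the unique planarity of $|X|$ (\Lr{Freu X}), which is the statement the paper actually needs downstream --- the corollary is only a by-product there. Your Moore half is correct as written: openness of $X$, the non-separation of components of $K$ via maximality, and upper semicontinuity via quasicomponents are all fine.

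The one place where what you have written is not yet a proof is the injectivity of the end-to-component correspondence. The inference ``since $\BS^2\sm K_0$ is connected, a neighbourhood basis of $K_0$ meets $X$ in sets that are eventually connected'' does not follow from connectedness of $\BS^2\sm K_0$ alone: other components of $K$ may accumulate on $K_0$, and a union of individually non-separating continua can a priori separate a small neighbourhood of $K_0$. The clean fix is to run this step downstairs in $\BS^2/\cd\isom\BS^2$, where the remainder $q(K)$ is compact and totally disconnected: there one invokes the classical fact that a closed totally disconnected subset of the sphere separates no domain (equivalently, Janiszewski's theorem that a compact plane set separates $\BS^2$ only if one of its components does), so that small balls around $q(K_0)$ meet $q(X)$ in connected sets, giving exactly one end per component; surjectivity and continuity then follow by the diagonal argument you indicate, and compactness of $|X|$ against Hausdorffness of $\BS^2/\cd$ finishes the identification. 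So the proof is completable, but this lemma must be named and used --- it is not bookkeeping, and it is precisely the point your sketch elides.
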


Part of the motivation behind this paper was to understand which planar surfaces admit an action by an infinite group. \Prr{ext pd i}, and the discussion in \Sr{X} sheds some light into this question. One way to produce such surfaces is to start from an action as in \Tr{tfae} \ref{T iii}, remove any totally disconnected subspace of the quotient space, and lift those punctures back to the original space. But this does not account for all such surfaces, see the Remark after \Cr{cor four}. I think that it is possible to give a full list of those surfaces by pursuing these ideas further, but this will be rather tedious and will not add much to our understanding. Much more interesting would be to extend \Tr{tfae} \ref{T iii} to higher dimensions:

\begin{problem}
Is there for every $n\geq 3$ a finite list $M_n$ of $n$-manifolds \st\ if a (finitely generated) group $\Gamma$ acts faithfully and \pd ly on any $n$-manifold contained in $\R^n$, then $\Gamma$ acts faithfully, \pd ly and co-compactly on an element of $M_n$?
\end{problem}

\Tr{act R3} motivates
\begin{problem}
Let $\Gamma$ be a group acting faithfully and \pd ly on an $n$-manifold contained in $\R^n$. Does $\Gamma$ act faithfully and \pd ly on $\R^{n+1}$? On $\R^{n+f(n)}$ for some function $f: \N \to \N$?
\end{problem}

In this paper we proved the case $n=2$, and this needs all the implications \ref{T i} $\to$ \ref{T ii} $\to$ \ref{T iii} $\to$ \ref{T iv} $\to$ \Tr{act R3}, so it is not straightforward to adapt our proof to higher dimensions. 

\medskip
The proof of \Trs{tfae} and~\ref{Kleinian} span Sections~\ref{sec cocomp}--\ref{proof}. \Tr{act R3} needs the full strength of \Tr{tfae} as it is based on \ref{T iv}. We prove it in \Sr{sec R3}. 

\subsection{Proof ideas} \label{sketch}
We now sketch the main ideas behind the proof of \Tr{tfae}. 

A central notion in the study of groups acting on surfaces, going back at least as far as Poincare's polyhedron theorem \cite{Maskit}, is that of a \defi{fundamental domain}. For the purposes of this sketch, let us say that a fundamental domain of the action $\Gamma \act X$ is a subset  $D\subset X$ containing exactly one point from each orbit of $\Gamma \act X$. Thus $\Gamma$ is in bijection with the set of translates of $D$, and the nicer $D$ is, the easier it makes it to understand the action. In general, one wants $D$ to be connected, and the ideal situation is when the closure of $D$ is a polygon, with its translates giving a locally finite tessellation of $X$. This way one obtains e.g.\ regular tessellations of the hyperbolic and euclidean plane when $\Gamma$ is a crystallographic group. 

When $\Gamma$ acts by arbitrary homeomorphisms rather than isometries however, and especially when $X$ is the Cantor sphere, then it is not easy to find useful fundamental domains. Instead, we will work with fundamental domains of \emph{graphs} embedded in $X$ upon which $\Gamma$ acts: a \defi{fundamental domain} in this sense will be a connected subgraph containing exactly one vertex from each $\Gamma$-orbit. We will make extensive use of an observation of Babai \cite{BabCon} (\Sr{sec Bab}), that if $\Gamma$ acts freely on a connected graph $H$, then contracting each translate of a fundamental domain into a vertex turns $H$ into a \Cg\ of $\Gamma$. 

Let me explain how this helps to prove  \Tr{tfae}.
Suppose $\Gamma$ is a finite group, and $\Gamma \act  \BS^2$ a faithful action by homeomorphisms.
As a warm-up exercise, let me sketch a proof that $\Gamma$ has a planar \Cg, which is the easiest special case of the implication \ref{T i} $\to$ \ref{T ii}. Easily, there is a point $p\in \BS^2$ with trivial stabiliser in $\Gamma \act  \BS^2$. In other words, the orbit $V$ of $p$ is in bijection with $\Gamma$. Let  $G$ be any \Cg\ of $\Gamma$, for example the complete graph on $\Gamma$. We identify the vertex set of $G$ with $V$ using the aforementioned bijection, and we map each edge of $G$ to an arc in $\BS^2$ between its end-vertices \st\ the action $\Gamma \act  \BS^2$ permutes these arcs. This can easily be achieved by choosing the arcs corresponding to edges of $G$ going out of a reference vertex, and defining the rest of the arcs as the images of the former under $\Gamma \act  \BS^2$. The map of $G$ into $\BS^2$ thus defined is not necessarily an embedding, as these arcs may intersect each other. However, it is easy to choose the arcs so that they intersect in at most finitely many points. Treating these intersection points as vertices defines a new graph $G'$. This $G'$ is planar by definition. It is not a \Cg, although $\Gamma$ acts on it by restricting $\Gamma \act  \BS^2$ to $G'$. With a slight modification, only needed if $\Gamma$ contains involutions, we can ensure that the latter action $\Gamma \act G'$ is free. By Babai's aforementioned result, $G'$ contracts into a \Cg\ $C$ of $\Gamma$. Since the contracted sets are connected, and $G'$ was planar, so is its minor $C$. We have found a planar \Cg\ of $\Gamma$, proving in particular that it is one of Maschke's groups \cite{Maschke}.

The same technique works when $\Gamma$ is infinite but finitely generated and acts \pd ly on a proper sub-surface of $X\subset \BS^2$. Proper discontinuity is important for ensuring that each arc is intersected at most finitely often when mapping $G$ into $X$, but the rest of the proof is essentially the same, and it yields the implication \ref{T i} $\to$ \ref{T ii}. 

The requirement that $\Gamma$ be finitely generated was useful here in order to guarantee that each arc is intersected at most finitely often. If we drop it, and some arcs are intersected infinitely often, perhaps we can still control those intersections so as to get a graph-like space in the sense of \cite{ThomassenVellaContinua}.  I suspect that Babai's result can be extended to such spaces, and that this can be used to generalise our proofs to the infinitely generated case, but this will require additional work.

For \ref{T ii} $\to$ \ref{T iii} the main machinery is the work of Thomassen \& Richter \cite{ThomassenRichter}, showing that if $G$ is a 3-connected planar graph, then its \FC\ $|G|$ embeds in $\BS^2$, and this embedding is unique up to modifying it by a homeomorphism of $\BS^2$ (the corresponding statement for finite graphs is a classical result of Whitney), see \Sr{sec TR} for details. Given a \Cg\ $G$ as in \ref{T ii}, we extend $\Gamma \act G$ to the faces of an embedding of $|G|$ into $\BS^2$, and after removing the images of the ends of $G$ we are left with a \pd\ action on one of the four spaces in \ref{T iii}, because every finitely generated \Cg\ has either at most two or a Cantor set of ends. Co-compactness is a byproduct of this construction. Some technical difficulties arise from the fact that our graphs are not necessarily 3-connected, and are overcome by embedding them into 3-connected graphs on which $\Gamma$ acts freely but not transitively (\Lr{ext 3cd}).

To go from \ref{T iii} to \ref{T iv}, we revisit the above proof of \ref{T i} $\to$ \ref{T ii}. We are given a plane \Cg\ $G$ of $\Gamma$ some faces of which have infinite paths of $G$ in their boundaries. We extend $G$ by adding some further generators, and map the corresponding edges into arcs in $\BS^2$ that cut up all such faces into smaller faces bounded by cycles, when again we treat intersection points of these arcs as new vertices in an auxiliary graph $G'$. The fact that this is possible with only finitely many additional generators is not obvious: it requires Dunwoody's \cite{DunAcc} result that finitely presented groups are accessible, combined with Droms' \cite{DroInf} result that planar groups are finitely presented, see \Lr{cutspace}. We then apply  Babai's contraction result as above to $G'$ to obtain a \Cg\ of $\Gamma$, and show that the property that all faces are bounded by finite cycles is hereby preserved.
\medskip

For the proof of \Tr{act R3}, we use \ref{T iv} and the main result of \cite{planarPresI}. The latter states that every 
\covaly\ planar Cayley graph $G$ is the 1-skeleton of a Cayley complex $Z$, which complex can be mapped into $\BS^2$ in such a way that (a) the restriction of the map to $G$ is \cova, and (b) the images of any two 2-cells of $Z$ are \defi{nested}, i.e.\ either disjoint, or one contained in the other (\Lr{pla pres}). The latter property allows us to map the 2-cells of $Z$ injectively into the inside $B^3$ of $\BS^2$ in $\R^3$, so that the image of $Z$ separates $B^3$ into 3-dimensional `chambers'. Using \ref{T iv} and some refinements on \Lr{pla pres} proved in \Sr{sec R3}, we can control the boundary of those chambers, so that we can use them to extend the action $\Gamma \act Z$ into a \pd\ action on $B^3$. 


\comment{
\begin{problem}
A finitely generated group $\Gamma$ acts faithfully and \pd ly on $\R^{n}$ for some $n\in \N$ \iff\ $\Gamma$ is finitely presented.
\end{problem}
}

\section{Definitions} \label{sec defs}

\subsection{Graphs} \label{def graphs}

We follow the terminology of \cite{diestelBook05}.

A 1-way infinite path is
called a \defi{ray}, a 2-way infinite path is
a \defi{double ray}.  

Two rays $R,L$ in $G$ are \defi{equivalent} if no finite set of vertices
separates them; we denote this fact by $R\approx_G L$, or simply by $R\approx L$ if $G$ is fixed. The corresponding equivalence
classes of rays are the \defi{ends} of $G$. We
denote the set of these ends by $\Omega =
\Omega(G)$. 

\subsection{Embeddings in the plane} \label{secDem}

An {\em embedding} of a graph \g will always mean a topological embedding of the corresponding 1-complex in the sphere $\BS^2$; in simpler words, an embedding is a drawing in $\BS^2$ with no two edges crossing. 

More generally, an embedding of a topological space $X$ in a topological space $Y$, is a map $\sigma: X\to Y$ which is a homeomorphism of $X$ to its image $\sigma(X)$.

A \defi{plane} graph is a graph endowed with a fixed embedding. A graph is \defi{planar}, if it admits an embedding. 

A {\em face} of an embedding $\sig: X \to \BS^2$, where $X$ is a topological space, is a component of $\BS^2 \sm \sig(X)$. If $X$ is a graph, or the \FC\ of a graph (see \Sr{sec FC}), we will say that  the face $F$ has {\em finite boundary}, if $\partial F$ contains the images of only finitely many vertices and edges of \G. 
Note that in this case  $\partial F$ is a cycle of \G.

A walk or path in \g is called {\em facial} with respect to \sig\ if it is contained in the boundary of some face of \sig. 

\subsection{Group actions} \label{sec GA}

Given a group $\Gamma$ and a generating set $S \subset \Gamma$, we define the (right) \defi{\Cg} $G= Cay(\Gamma,S)$ to be the graph with vertex set $V(G)= \Gamma$ and edge set $E(G)= \{ g(gs) \mid g\in \Gamma, s\in S\}$. We consider $G$ as a directed, labelled graph, with the edge $g(gs)$ being directed from $g$ to $gs$ and labelled by the generator $s$. Here, we are assuming that $S$ generates $\Gamma$, so that all \Cg s in this paper are connected. The group $\Gamma$ acts on $G$ by automorphisms, by multiplication on the left. 
By allowing $S$ to be a multi-set, possibly containing the identity, in the above definition we obtain a \defi{Cayley multi-graph}, with parallel edges and loops. For most of the paper parallel edges and loops, but they can matter in \ref{T iv} of \Tr{tfae}.

The \emph{Cayley complex} $C_\PF$ corresponding to a group presentation $\PF=\left< \SF \mid \mathcal \RF \right>$ is the 2-complex obtained from the \Cg\ $G$ of $\PF$ by glueing a 2-cell along each closed walk of $G$ induced by a relator $R\in \RF$. Here, a \defi{walk} $W$ is a sequence $v_0, v_1, \ldots v_k$ of vertices, \st\ each $v_i$ is joined to $v_{i+1}$ with an edge of \G; it is \defi{closed} when $v_k=v_1$. We say that $W$ is \defi{induced} by a relator $R$, if $R$ has exactly $k$ letters and the label of the edge from $v_i$ to $v_{i+1}$  coincides modulo $k$ with the $(i+n)$th letter of $R$ \fe\ $i$ and some fixed $n$.  

\medskip
Given a topological space $X$ and a group $\Gamma$ acting on it, the images of a  point $x\in X$ under the action of $\Gamma$ form the \defi{orbit} of $x$. A \defi{fundamental domain} is a subset of $X$ which contains exactly one point from each of these orbits.

An action $\Gamma \act X$ is \defi{\pd}, if it satisfies any of the following equivalent conditions:
\begin{enumerate} 
\item \label{pd i} \fe\ compact subspace $K$ of $X$, the set $\{g\in\Gamma \mid g K \cap K \neq \emptyset\}$ is finite;
\item \label{pd ii} \fe\ two compact subspaces $K,K'$ of $X$, the set $\{g\in\Gamma \mid g K \cap K' \neq \emptyset\}$ is finite;
\item \label{pd iii}  \fe\ $x,y\in X$ there are open neighbourhoods $U_x \ni x, U_y \ni y$ \st\ $U_x$ intersects $gU_y$ for at most finitely many $g\in \Gamma$.
\end{enumerate}
To see the equivalence of \ref{pd i} and \ref{pd ii} it suffices to notice that $K\cup K'$ is compact. 
 A proof of the equivalence of \ref{pd i} and \ref{pd iii} can be found in \cite{KapPD}, which offers many more equivalent definitions.

An action $\Gamma \act X$ is \defi{faithful}, if for every two distinct $g,h \in G$ there exists an $x \in X$ such that $gx \neq hx$; or equivalently, if for each $g \neq e \in G$ there exists an $x \in X$ such that $gx \neq x$. It is \defi{free} if $gx \neq hx$ \fe\ $g,h\in \Gamma$ and $x\in X$. It is \defi{transitive} if \fe\ $x,y\in X$ \ti\ $g\in \Gamma$ with $gx =y$ (we will only encounter transitive actions on discrete spaces $X$). Finally, $\Gamma \act X$ is \defi{regular} if it is free and transient.

An action $\Gamma \act X$ is \defi{co-compact}, if the quotient space $X/\Gamma$ is compact. If $X$ is locally compact, then an equivalent condition is that there is a compact subset $K$ of $X$ such that $\bigcup \Gamma K =X$. 

\subsection{Covariant embeddings}

Let \g be a graph and $\sigma: G \to \BS^2$ an embedding into the sphere. We say that $\sigma$ is \defi{\cova} \wrt\ a group action $\Gamma \act G$, if every element of $\Gamma$ maps each facial path of $\sigma$ into a facial path. To simplify notation, if $G$ is a \Cg\ of $\Gamma$ then we just say that $\sigma$ is {\cova} in this case. If \g is a plane graph, then we say that \g  is  \defi{\cova} if its embedding is \cova. If \g admits a \cova\ embedding then we say that it is \defi{\covaly\ planar}.

\subsection{Kleinian groups} \label{sec Klein}
A \defi{Kleinian group} is a discrete subgroup of $PSL(2, \C)$. Since $PSL(2, \C)$ is isomorphic to the group of M\"obius transformations of the Riemann sphere, every Kleinian group $\Gamma$ comes with a canonical action $\Gamma \act \BS^2$. The set of accumulation points of orbits of $\Gamma \act \BS^2$ is the \defi{limit set} $\Lambda(\Gamma)$, and the \defi{domain of discontinuity} is $\BS^2 \sm \Lambda(\Gamma)$. It is easy to check that $\Gamma$ acts \pd\ ly on the latter.

\subsection{Topology}

The \defi{boundary} $\partial U$ of a subset $U$ of a topological space $X$ comprises the points $x\in X$ \st\ every open neighbourhood of $x$ intersects both $U$ and $X\sm U$. The closure $\cls{U}$ of $U$ is the set $U \cup \partial U$.

The \defi{Cantor sphere} is the topological space $\cc$ obtained by removing from $\BS^2$ any subspace $S$ homeomorphic to the Cantor set. The well-known fact that $\cc$ does not depend on the particular choice of $S$ follows from Richards' classification of noncompact surfaces \cite{Richards}. The following well-known fact provides some explanation why the Cantor set is important for us.

\begin{proposition}[\cite{BroPer}] \label{Cantor}
Every nonempty totally disconnected perfect compact metrizable space is homeomorphic to the Cantor set
\end{proposition}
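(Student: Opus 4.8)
The plan is to exhibit a homeomorphism from $X$ onto the standard Cantor set $\{0,1\}^\N$ (product topology), which suffices since the latter is a model of the Cantor set. I would organise $X$ into a \emph{Cantor scheme}: a family $(U_s)_{s \in 2^{<\N}}$ of nonempty clopen subsets of $X$, indexed by finite binary strings, \st\ $U_\emptyset = X$, each parent is the disjoint union $U_s = U_{s0} \sqcup U_{s1}$ of its two children, and the diameters $\mathrm{diam}(U_{x\restr n})$ tend to $0$ along every branch $x \in \{0,1\}^\N$. Given such a scheme, the map $\phi : \{0,1\}^\N \to X$ sending a branch $x$ to the unique point of $\bigcap_n U_{x\restr n}$ will be the desired homeomorphism. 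Two structural facts drive the construction: that the clopen subsets of $X$ form a basis for its topology, and that $X$ has no clopen singletons.

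For the clopen basis I would use that a compact Hausdorff space is normal, and that in a compact totally disconnected space the connected component of each point --- namely the point itself --- coincides with its quasi-component, i.e.\ with the intersection of all clopen sets containing it. A standard compactness argument then separates any point from a disjoint closed set by a clopen set, and this yields a basis of clopen sets. The second fact is immediate from perfectness: a clopen singleton would be an isolated point. Hence every nonempty clopen set $U$ contains two distinct points, which the clopen basis separates, so $U$ splits as a disjoint union of two nonempty clopen sets. I would isolate this as a \emph{splitting lemma}, together with its finite version: given $\eps > 0$, each nonempty clopen $U$ can be partitioned into finitely many nonempty clopen sets of diameter $< \eps$ (cover $U$ by clopen sets of small diameter, extract a finite subcover, and disjointify).

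The scheme is then built so that diameters shrink along branches rather than uniformly at each level. The basic operation is: given nonempty clopen $U$ and $\eps > 0$, partition $U$ into finitely many nonempty clopen sets of diameter $< \eps$ by the finite version above, and arrange these finitely many pieces as the \emph{leaves} of a finite binary tree, assigning to each internal node the union of its descendant leaves --- internal nodes may be large, but only the leaves must be small. Starting from $U_\emptyset = X$ with $\eps = 1/2$, recursing into each leaf with $\eps$ halved, and padding shorter branches by arbitrary binary splits (always possible since no nonempty clopen set is a singleton), I obtain the family $(U_s)_{s \in 2^{<\N}}$ with all the required properties. The nested nonempty compact sets $U_{x\restr n}$ then have a single common point, so $\phi$ is \wed; the diameter bound along branches gives continuity; two distinct branches first differ at some coordinate, after which their images lie in disjoint $U_s$'s, giving injectivity; and since at each split a given point of $X$ lies in exactly one child, every point traces out a branch, giving surjectivity. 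Finally $\phi$ is a continuous bijection from the compact space $\{0,1\}^\N$ to the Hausdorff space $X$, hence a homeomorphism.

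The main obstacle is the first structural fact, that clopen sets form a basis; the scheme construction afterwards is a routine, if bookkeeping-heavy, tree-of-partitions argument. The subtlety is that total disconnectedness alone only guarantees that components are points, whereas what the construction needs is the stronger separation of points by \emph{clopen} sets, and it is precisely compactness that upgrades ``component'' to ``quasi-component'' here. I would therefore take care to invoke compactness at exactly that step, and again when passing from a clopen cover to a finite clopen partition of prescribed mesh.
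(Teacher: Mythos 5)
The paper does not prove this proposition at all: it is quoted as a classical result of Brouwer, with only the citation \cite{BroPer} offered in its place. Your proposal therefore cannot match or diverge from an argument in the paper; what it does is supply a complete, self-contained proof of the quoted fact, and that proof is correct. It is the standard Cantor-scheme argument (as in, e.g., Kechris's \emph{Classical Descriptive Set Theory}, Theorem 7.4): compactness upgrades total disconnectedness to zero-dimensionality (components coincide with quasi-components in a compact Hausdorff space, so clopen sets form a basis), perfectness forbids clopen singletons and so permits splitting, and the tree of clopen partitions with mesh shrinking along branches yields a continuous bijection from $\{0,1\}^{\N}$, which is a homeomorphism by compactness of the domain. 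You correctly flag the one genuinely delicate point, namely that total disconnectedness alone does not give a clopen basis and that compactness must be invoked exactly there; this is where a careless proof would fail. Two small remarks: the ``padding'' of shorter branches is harmless because each generation's finite tree has bounded depth, so every infinite branch meets a leaf of every generation and the diameter bound along branches really does hold; and the closing step is literally the paper's own lemma that a continuous bijection from a compact space to a Hausdorff space is a homeomorphism, so the argument sits comfortably within the toolkit the paper already assumes.
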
 

\comment{
	The following is a consequence of Caratheodory's theorem, see \cite{DouHubEtu} or \cite{ThomassenRichter}[Lemma~4].
\begin{lemma} \label{Carath}	
Let $K$ be a compact, connected, locally connected subset of $\BS^2$, and let $U$ be a face of $K$. Then $\partial U$ is connected and locally connected. 
	\end{lemma}
}

A topological space is \defi{$k$-connected} for some $k\in \N$, if it is connected and remains so upon the deletion of any $k-1$ points.

A topological space is \defi{arc-connected}, if it contains a homeomorph of the real unit interval joining any two of its points.

\medskip
An $n$-\defi{manifold} is a metric space $X$ such that each point $x\in X$ has an open neighbourhood homeomorphic to $\R^n$. Smooth manifold structures are irrelevant in this paper, except shortly in \Sr{proof}.

\subsection{The \FC} \label{sec FC}
Even more than the above, this subsection is to be used as a reminder and for fixing notation; readers unfamiliar with the \FC\ are advised to consult a textbook for Topology.
\smallskip

Let $X$ be a topological space, and suppose that
$K_1\subseteq K_2 \subseteq K_3 ⊆ \ldots$ 
is a  sequence of compact subsets of $X$ whose interiors cover $X$.

An \defi{end} of $X$ is an equivalence class of nested sequences $U_1 \supseteq U_2 \supseteq U_3 \ldots$, where each $U_i$ is a  (connected) component of $X \sm K_i$, and two such sequences \seq{U},\seq{V}\ are declared to be \defi{equivalent}, if each $U_i$ contains $V_j$ for sufficiently large $j$ and conversely, each $V_i$ contains $U_k$ for sufficiently large $k$.

A space $X$ admitting a sequence \seq{K}\ as above is called \defi{hemicompact}. Examples include all connected manifolds, all connected locally-finite graphs viewed as 1-complexes, and more generally, all connected locally-finite simplicial compexes.

The set of ends \defi{\OO(X)} of $X$ is used to define a compactification $|X|:= X \cup \OO(X)$, called the \defi{\FC} or the end comactification of $X$. A basis of open neighbourhoods for $|X|$ can be obtained from one for $X$ by declaring $U \cup \OO(U)$ to be a basic open set whenever $U$ is a component of $X \sm K_i$ for some $K_i$ as above, and $\OO(U)$ is the set of equivalence classes of sequences $U_1 \supseteq U_2 \supseteq U_3 \ldots$ where $U_i \subseteq U$ for large enough $i$.

It is straightforward to check that when $X$ is a connected, locally finite graph, then this definition of $\OO(X)$ coincides with the combinatorial one from \Sr{def graphs}.

\section{Preliminaries}

\subsection{The extensions of Whitney's theorem by Thomassen \& Richter} \label{sec TR}

\begin{lemma}[{\cite[Proposition~3]{ThomassenRichter}}] \label{scc}	
Let $K$ be a compact, 2-connected, locally connected subset of $\BS^2$. Then every face of $K$ is bounded by a simple closed curve (contained in $K$, since $K$ is closed).
\end{lemma}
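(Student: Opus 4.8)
The plan is to reduce the statement to two classical ingredients of plane topology: Carath\'eodory's theorem on the boundary behaviour of conformal maps, and a Jordan-curve separation argument that is exactly where the $2$-connectedness of $K$ is spent. Fix a single face $U$, that is, a connected component of $\BS^2 \sm K$; it suffices to treat $U$. Note first that $K$ is a \emph{Peano continuum}: it is compact and locally connected by hypothesis, and connected because $2$-connectedness subsumes connectedness.

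I would begin by showing that $U$ is simply connected, which on the sphere is equivalent to $\BS^2 \sm U$ being connected. Writing $\BS^2 \sm U = K \cup \bigcup_i V_i$, where the $V_i$ are the remaining components of $\BS^2 \sm K$, each closure $\overline{V_i}$ is connected and meets $K$ since $\partial V_i \subseteq K$; hence the whole union is connected. Deleting one point of $K$ to pass to $\C$, the Riemann mapping theorem now supplies a conformal homeomorphism $f \colon \D \to U$.

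The key input is Carath\'eodory's theorem: provided $\partial U$ is locally connected, $f$ extends to a continuous surjection $\overline{f} \colon \overline{\D} \to \overline{U}$ with $\overline{f}(\BS^1) = \partial U$. Establishing that $\partial U$ is locally connected from the local connectedness of $K$ is the first technical point; this is the Torhorst-type fact that the boundary of a complementary domain of a Peano continuum is again a Peano continuum, which I would either cite or derive from the characterisation of Peano continua as the continuous images of $[0,1]$ (the Hahn--Mazurkiewicz theorem). It then remains to prove that $\overline{f}$ is injective on $\BS^1$: a continuous bijection from the compact space $\BS^1$ onto the Hausdorff space $\partial U$ is automatically a homeomorphism, exhibiting $\partial U$ as a simple closed curve.

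Proving this injectivity is, I expect, the main obstacle, and it is the only place the hypothesis of $2$-connectedness is used. Suppose $\overline{f}(a) = \overline{f}(b) = p$ for distinct $a,b \in \BS^1$. The image under $\overline{f}$ of the closed chord from $a$ to $b$ is a Jordan curve $J$ meeting $K$ only in $p$, since its interior maps into $U$; let $D_1, D_2$ be the two components of $\BS^2 \sm J$. The chord cuts $\D$ into two lenses whose bounding arcs $A_1, A_2 \subseteq \BS^1$ satisfy $\overline{f}(A_i) \subseteq \overline{D_i}$. A short computation shows that if $K \cap D_i = \emptyset$ then $\overline{f}$ collapses the whole arc $A_i$ to $p$; but a non-constant bounded analytic function cannot be constant on an arc of the circle (by the uniqueness theorem for boundary values), so neither arc collapses, and therefore $K$ meets both $D_1$ and $D_2$. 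Consequently $J$ separates $K \sm \{p\}$, making $p$ a cut point of $K$ and contradicting $2$-connectedness. Hence $\overline{f}|_{\BS^1}$ is injective, completing the proof.
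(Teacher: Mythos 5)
Your argument is sound, but note that the paper itself offers no proof of this lemma: it is imported wholesale from Richter--Thomassen as their Proposition~3, so there is no internal argument to compare against. What you have written is the classical complex-analytic route to this statement (it is essentially the standard characterisation of Jordan domains: a simply connected domain is bounded by a simple closed curve iff its boundary is a locally connected continuum without cut points), whereas Richter and Thomassen work in the elementary point-set-topological tradition of Peano continua. Your chain of reductions is correct: the complement of a face $U$ is connected because every other complementary component has its boundary in the connected set $K$, so $U$ is simply connected and the Riemann mapping theorem applies; Torhorst's theorem supplies the local connectedness of $\partial U$ needed for Carath\'eodory's continuity theorem; and the injectivity step is exactly where $2$-connectedness (in the paper's sense: no cut points) is spent. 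That step checks out in detail: if $\overline{f}(a)=\overline{f}(b)=p$, then $\overline{f}(A_i)\subseteq \partial U\cap\cls{D_i}\subseteq K\cap(D_i\cup J)$, and $K\cap J=\{p\}$ since $J\sm\{p\}\subseteq U$, so $K\cap D_i=\emptyset$ forces $\overline{f}(A_i)=\{p\}$, which the boundary uniqueness theorem forbids; hence $K\sm\{p\}$ meets both sides of $J$ and is disconnected. Two small caveats: you should record that the two lenses map into \emph{distinct} components $D_1,D_2$ (if both landed in $D_1$ then $D_2$ would be an open set disjoint from $U$ whose boundary meets the open set $U$, which is impossible), and the statement is vacuously false for a degenerate one-point $K$, a case implicitly excluded here since $K$ is always the image of an infinite graph or its compactification. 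Your proof buys self-containedness at the price of the Riemann mapping machinery; the cited proof avoids analysis but leans on the classical theory of cyclic elements of Peano continua. Either is acceptable.
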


\begin{lemma}[{\cite[Lemma~12]{ThomassenRichter}}] \label{FG embeds}	
Let $G$ be a locally finite 2-connected planar graph.  Then the \FC\ $|G|$ of \g embeds in $\BS^2$.
\end{lemma}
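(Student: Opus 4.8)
The plan is to begin from any planar embedding $\sigma:G\to\BS^2$ (one exists since $G$ is planar) and to deform it, without altering its combinatorial structure, into an embedding $\tau$ for which the ends ``converge to points''; this deformed $\tau$ will then extend to a topological embedding of $|G|$. Fix an exhaustion $H_1\subseteq H_2\subseteq\cdots$ of $G$ by finite subgraphs with $\bigcup_n H_n=G$, built so that every $H_n$ is $2$-connected (start from a cycle and keep adding ears, which preserves $2$-connectedness) and so that no component of $G\sm H_n$ is finite. As $G$ is locally finite, $G\sm H_n$ has only finitely many components, each joined to $H_n$ by finitely many edges, and each component converges to a union of ends in the sense of \Sr{sec FC}.

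The core idea is to produce, for each $n$ and each component $C$ of $G\sm H_n$, an open topological disk $D_C\subseteq\BS^2$ that confines $\tau(C)$ together with the stubs of the finitely many edges joining $C$ to $H_n$, in such a way that the disks refine across levels (so $\overline{D_{C'}}\subseteq D_C$ whenever $C'\subseteq C$ passes from level $n$ to level $n+1$) and their diameters tend to $0$ along every descending chain. Here $2$-connectedness and \Lr{scc} are what make the construction possible: the image $\sigma(H_n)$ is a compact, locally connected, topologically $2$-connected subset of $\BS^2$, so by \Lr{scc} each of its faces is an open disk bounded by a simple closed curve. Since $\sigma(C)$ is connected and misses $\sigma(H_n)$, every component $C$ lies inside one such face, and inside that Jordan domain I have the room to push the finitely many components sharing it into pairwise disjoint disks of small diameter, running their attaching edges in without crossings. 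Carrying this out face by face gives a homeomorphism $\phi_n$ of $\BS^2$ supported in the level-$(n{-}1)$ disks; because these supports shrink, the infinite composition $\tau:=\lim_n(\phi_n\circ\cdots\circ\phi_1)\circ\sigma$ stabilises on each $H_k$ and hence defines a planar embedding of $G$ realising the disk system.

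With $\tau$ in hand I extend it to the ends. An end $\omega$ is a descending chain $C_1\supseteq C_2\supseteq\cdots$ of components, one per level, and the corresponding closed disks $\overline{D_{C_n}}$ are nested with diameters tending to $0$, so $\bigcap_n\overline{D_{C_n}}$ is a single point $p_\omega$; set $\bar\tau(\omega):=p_\omega$. Distinct ends are eventually housed in disjoint components, hence in disjoint disk-closures, so they receive distinct points; and since every point of $\tau(G)$ lies in the ``deep'' part $\tau(H_m)$ for some $m$, which the disks $D_{C_n}$ avoid for $n>m$, no $p_\omega$ is a graph point. Continuity of $\bar\tau$ is clear on $G$ and holds at each end $\omega$ because the basic neighbourhood $C_n\cup\OO(C_n)$ of $\omega$ in $|G|$ is carried into the closure of $D_{C_n}$, whose diameter tends to $0$. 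Thus $\bar\tau:|G|\to\BS^2$ is a continuous bijection onto its image. As $|G|$ is compact (it is the \FC\ of a connected locally finite graph) and $\BS^2$ is Hausdorff, $\bar\tau$ is automatically a homeomorphism onto its image, giving the desired embedding.

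I expect the genuinely delicate step to be the inductive deformation of the second paragraph: one must confine each infinite component together with its attaching edges into a disk whose diameter can be forced to $0$ along every end, while keeping all these disks disjoint and the attaching edges embedded, and then verify that the resulting infinite composition of boundedly supported homeomorphisms converges to an \emph{embedding} rather than merely a continuous map. The Jordan-domain description of faces provided by \Lr{scc} is exactly the tool that turns this into manageable bookkeeping, with the remaining verifications being routine point-set topology.
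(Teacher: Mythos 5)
The paper does not prove this lemma at all: it is quoted verbatim as Lemma~12 of Thomassen \& Richter, so there is no internal proof to compare against. Your overall architecture --- exhaust $G$ by finite $2$-connected subgraphs $H_n$, confine each component of $G\sm H_n$ to a disk, nest and shrink the disks, send each end to the intersection point of its chain, and finish with ``continuous bijection from a compact space to a Hausdorff space'' --- is the right skeleton and is essentially how such results are proved in the literature. The third paragraph (extension to ends, injectivity, continuity) is fine as written.

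The gap is in the central mechanism of your second paragraph: you cannot in general realise the confinement by homeomorphisms $\phi_n$ of $\BS^2$ supported away from the already-fixed part of the picture. Such a $\phi_n$ fixes the current image of $H_{n-1}$ pointwise and maps the \emph{closure} of each $\sigma(C)$ onto the closure of its image; in particular it preserves whether the closures of two components meet. Now an embedding $\sigma$ of the $1$-complex $G$ only guarantees that $\cls{\sigma(C)}$ is a compact subset of the open face containing it (accumulation of edges at a point of $\sigma(G)$ would already violate the subspace topology); it does \emph{not} prevent two distinct components $C_1,C_2$ of $G\sm H_n$, drawn in the same face, from having $\cls{\sigma(C_1)}\cap\cls{\sigma(C_2)}\neq\emptyset$. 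For instance, attach two one-way infinite ladders to a finite $2$-connected base and draw their tails as interleaved spirals accumulating on a common circle inside one face: this is a legitimate embedding of the $1$-complex, yet no homeomorphism fixing the base can place the two tails in disks with disjoint closures, so either injectivity of $\bar\tau$ on the two corresponding ends fails or the diameters of your disk chains cannot tend to $0$. The same phenomenon recurs inside every level-$(n-1)$ disk, because your $\tau$ restricted to a component is always a homeomorphic copy of the original (possibly pathological) drawing. The standard repair is to \emph{redraw} rather than deform: use $\sigma$ only to extract combinatorial data (which face of $H_n$ each component attaches in, and the cyclic order of its attachment edges around the bounding cycle, the latter being where $2$-connectedness and \Lr{scc} enter), then construct a fresh embedding level by level, placing each component in a reserved small disk and drawing it there from scratch; consistency of these choices across levels is what requires an argument (e.g.\ taking the combinatorial embeddings induced by the single global $\sigma$, or a K\"onig's-lemma selection). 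With that replacement your limiting and end-extension arguments go through unchanged.
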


The following classical result of Whitney \cite[Theorem~11]{whitney_congruent_1932} (which easily extends to infinite graphs by compactness, see e.g. \cite{ImWhi})
says that 3-connected planar graphs have an essentially unique embedding.

\begin{theorem}[Whitney's theorem] \label{imrcb}
Let \g be a 3-connected graph embedded in the sphere. Then every automorphism of \g maps each facial path to a facial path.
\end{theorem}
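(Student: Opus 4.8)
The plan is to reduce the statement to a purely combinatorial description of the facial cycles that is manifestly invariant under automorphisms. I will use the classical fact, going back to Tutte, that in a finite $3$-connected plane graph the cycles bounding faces are exactly the \emph{peripheral} cycles, i.e.\ the cycles $C$ that are simultaneously \emph{induced} (have no chord) and \emph{non-separating} ($G-V(C)$ is connected). Both properties refer only to the abstract graph, not to the embedding, so any automorphism of $G$ permutes the peripheral cycles among themselves. Granting this characterisation, the theorem for finite $G$ is immediate: if $P$ is a facial path, then by \Lr{scc} (applied to $G$ itself, viewed as a compact $2$-connected locally connected subset of $\BS^2$) its ambient face is bounded by a cycle $C$, which must be peripheral; for any automorphism $\alpha$ the image $\alpha(C)$ is again an induced non-separating cycle, hence facial, and $\alpha(P)\subseteq\alpha(C)$ is therefore a facial path.

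The heart of the matter is the two inclusions of the characterisation, and this is where $3$-connectivity enters. For ``facial $\Rightarrow$ peripheral'', let $C$ bound a face $F$ and write $\BS^2\setminus C=F\sqcup F'$. A chord $uv$ would be drawn inside the closed disc $\overline{F'}$, splitting it into two discs $D_1,D_2$ meeting only along $uv$, whose boundary arcs $P_1,P_2\subseteq C$ each contain an internal vertex (as $u,v$ are non-adjacent on $C$); since no edge of $G$ crosses the open chord, $\{u,v\}$ would separate these internal vertices, contradicting $3$-connectivity, so $C$ is induced. Likewise, if $G-V(C)$ were disconnected then, all of it lying in the single disc $F'$, planarity forces the attachment points on $C$ of two distinct components to be non-interleaving, so a suitable pair of vertices of $C$ separates one component from the rest, again contradicting $3$-connectivity; hence $C$ is non-separating. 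The reverse direction is the clean one and needs only the Jordan curve theorem: if $C$ is induced and non-separating, then the connected graph $G-V(C)$, being disjoint from the arc $C$, lies entirely in one component of $\BS^2\setminus C$, say $F'$; the other component $F$ then contains no vertex of $G$ and no edge either (an edge in $F$ would be a chord, excluded by inducedness, or would run to a vertex lying on the wrong side), so $F$ is a face bounded by $C$.

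I expect the two ``$\Rightarrow$'' separation arguments, phrased carefully enough to actually exhibit the claimed $2$-separators from the plane picture, to be the only real work; the converse and the invariance step are formal once the characterisation is in hand.

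The remaining issue is the passage to infinite $G$, which I would handle exactly as indicated by \cite{ImWhi}, via a compactness and finite-approximation argument (using \Lr{FG embeds} to embed $|G|$ and reading off the combinatorial data on finite subgraphs). The subtlety here is that a face of an infinite graph need not be bounded by a cycle, so a facial path need not sit inside a facial cycle; one instead checks that membership of each \emph{finite} facial path in a common face boundary is detectable through finite peripheral subconfigurations, which $\alpha$ preserves, and then glues these local statements together. I would present the finite case in full and invoke \cite{ImWhi} for this extension, as it is genuinely routine.
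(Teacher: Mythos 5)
The paper does not prove this statement at all: it quotes it as a classical result, citing \cite{whitney_congruent_1932} for the finite case and \cite{ImWhi} for the extension to infinite graphs by compactness. Your proof of the finite case via the peripheral-cycle characterisation (face boundaries of a $3$-connected plane graph are exactly the induced non-separating cycles, a purely combinatorial and hence automorphism-invariant property) is the standard, correct argument, and the two separation arguments you flag as the real work do go through as sketched. Your treatment of the infinite case --- deferring to \cite{ImWhi} and a finite-approximation argument --- is precisely what the paper itself does, so there is nothing further to compare and no gap to report.
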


Thomassen \& Richter extended Whitney's theorem to the Freudenthal compactification of an infinite graph:
\begin{lemma}[{\cite[Theorem~2]{ThomassenRichter}}] \label{TR hom S}	
Let $G$ be a 3-connected planar graph. For every $g\in Aut(G)$, and every embedding $\sigma: |G| \to \BS^2$, there is a homeomorphism $h_g: \BS^2\to \BS^2$ \st\ $h_g \circ \sigma = g$. 

Even more, given any face $F$ of $\sigma$, and any homeomorphism $f: \cls{F} \to \cls{h_g(F)}$, we may assume that $h_g$ coincides with $f$ on $\cls{F}$.\footnote{The second statement of \Lr{TR hom S} is not stated explicitely in {\cite[Theorem~2]{ThomassenRichter}} but is implied by its proof.}
\end{lemma}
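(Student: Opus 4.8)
The plan is to extend the combinatorial automorphism to the sphere one face at a time, using that a $3$-connected embedding is combinatorially rigid and that each of its faces is bounded by a Jordan curve.

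First I would promote $g$ to a homeomorphism of the compactification. An automorphism of a connected locally finite graph permutes its ends and carries the neighbourhood basis of $|G|$ described in \Sr{sec FC} onto itself, so $g$ extends to a homeomorphism $\hat g\colon |G|\to |G|$. Writing $K:=\sigma(|G|)$ and $\psi:=\sigma\circ\hat g\circ\sigma^{-1}\colon K\to K$, the requirement that $h_g$ extend $g$ through $\sigma$ forces $h_g|_K=\psi$, so the whole problem is to extend $\psi$ across the faces of $\sigma$. By Whitney's theorem (\Tr{imrcb}), $g$ maps facial paths to facial paths, so $\psi$ permutes face boundaries: for each face $F$ there is a face, which I denote $h_g(F)$, with $\psi(\partial F)=\partial h_g(F)$. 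Since $G$ is $3$-connected, $K$ is a compact, $2$-connected, locally connected subset of $\BS^2$, so by \Lr{scc} each $\partial F$ is a simple closed curve and, by the Jordan--Schoenflies theorem, each $\cls{F}$ is a closed disk.

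Now I would extend $\psi$ over the faces. For every face $F$ the boundary homeomorphism $\psi|_{\partial F}\colon\partial F\to\partial h_g(F)$ extends, via Schoenflies and Alexander's coning trick, to a homeomorphism $\cls{F}\to\cls{h_g(F)}$ of the two closed disks; setting $h_g:=\psi$ on $K$ and equal to these extensions on the face-closures yields a bijection of $\BS^2$. It is continuous on $K$ and on each open face separately, and a continuous bijection from a compact space to a Hausdorff space is a homeomorphism \cite{armstrong}, so it remains only to verify continuity everywhere. This is the delicate point and the main obstacle: continuity can fail at a point of $K$ accumulated by infinitely many faces. I would control it by a diameter refinement of the extension step --- as $\BS^2$ is compact, for each $\eps>0$ only finitely many faces have diameter exceeding $\eps$, and since $\psi$ is uniformly continuous on the compact set $K$, I can choose each disk extension so that its image has diameter bounded in terms of that of $\psi(\partial F)$. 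Then small faces are sent to small faces, which yields continuity at the accumulation points.

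For the final (footnoted) clause I would observe that the construction prescribes nothing on the interior of any single face. Given the distinguished face $F$ and the homeomorphism $f\colon\cls{F}\to\cls{h_g(F)}$ --- which necessarily satisfies $f|_{\partial F}=\psi|_{\partial F}$, since $h_g|_K=\psi$ is forced --- I would simply use $f$ as the extension over $F$ and extend over every other face by the diameter-controlled Schoenflies procedure above. Modifying a single face does not disturb the finiteness bookkeeping that secures continuity, so the resulting $h_g$ is still a homeomorphism, and now $h_g|_{\cls{F}}=f$ as required.
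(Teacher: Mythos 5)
The paper offers no proof of this lemma at all: it is imported verbatim (plus the footnoted addendum) from Thomassen and Richter, so there is no in-paper argument to compare yours against. Your sketch is a sound reconstruction of what is essentially their proof: extend $g$ over the ends, use combinatorial rigidity (\Tr{imrcb}) to see that the induced homeomorphism $\psi$ of $K=\sigma(|G|)$ permutes face boundaries, fill in each face by Jordan--Schoenflies, and check continuity at points of $K$ accumulated by infinitely many faces via a diameter argument. Two remarks. First, in the continuity step there is nothing to ``choose'': the extension over $F$ is forced to have image exactly $\cls{h_g(F)}$, and the real point is that $\mathrm{diam}(\cls{h_g(F)})$ is controlled by $\mathrm{diam}(\partial h_g(F))=\mathrm{diam}(\psi(\partial F))$ (a closed Jordan domain lies in the convex hull of its boundary curve), which together with the uniform continuity of $\psi$ on the compact set $K$ gives ``small faces go to small faces''. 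Second, the assertion that $\psi$ permutes face boundaries is where the substance of Thomassen--Richter's theorem actually sits: one must know that a face boundary of the compact set $K$ --- which may pass through ends --- is the closure of the set of edges it contains, so that it is determined by combinatorial data that \Tr{imrcb} preserves; this follows from \Lr{scc} together with the total disconnectedness of $\OO(G)$, but it deserves an explicit sentence rather than being absorbed into the citation of Whitney. Your treatment of the footnoted clause is exactly right: the construction is free on the interior of each face, so the prescribed $f$ (which necessarily agrees with $\psi$ on $\partial F$) can be substituted on the distinguished face without disturbing continuity elsewhere.
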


In fact, Thomassen \& Richter proved a more general version of \Lr{TR hom S}, which we will also use:
\begin{lemma}[{\cite[Theorem~2]{ThomassenRichter}}] \label{TR hom S gen}	
Let $X$ be a 3-connected, compact, locally connected Hausdorff space, admitting an embedding in $\BS^2$. Then $X$ is uniquely planar.
\end{lemma}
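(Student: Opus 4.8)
The plan is to show that any two embeddings $\sigma_1,\sigma_2\colon X\to\BS^2$ differ by a self-homeomorphism of $\BS^2$, by reconstructing the combinatorial face structure of each embedding \emph{intrinsically} from $X$ and then extending a boundary identification over the faces. Write $X_i:=\sigma_i(X)$. Each $X_i$ is compact, locally connected, and, being a homeomorphic image of the $3$-connected (hence $2$-connected) space $X$, is itself $2$-connected, so \Lr{scc} applies: every face of $X_i$ is bounded by a simple closed curve, and by the Jordan--Schoenflies theorem the closure of each face is a closed disc.

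The key step is to characterise facial curves intrinsically. Call a subset $C\subseteq X$ a \emph{circle} if it is homeomorphic to $\BS^1$. I claim that, for either embedding, $\sigma_i(C)$ bounds a face of $X_i$ if and only if $X\setminus C$ is connected; in particular the family of facial circles does not depend on $i$. For the ``if'' direction, note that $\sigma_i(C)$ splits $\BS^2$ into two open discs by the Jordan curve theorem; if neither were a face then both would meet $X_i$, so $C$ would separate $X$. For the converse I use $3$-connectivity: if a facial circle $C$ separated $X$ into parts $A,B$, then all of $X_i\setminus\sigma_i(C)$ lies in the single non-face disc, $\cls A\cap\cls B\subseteq C$, and a Jordan-curve/planarity argument reduces this separation to one effected by at most two points of $C$, contradicting $3$-connectedness. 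Since $X$ is neither a single point nor a circle (both fail to be $3$-connected), no facial circle can bound faces on both sides, so for each $i$ the faces of $X_i$ are in bijection with the facial circles via $F\mapsto\partial F$.

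With this in hand, set $\phi:=\sigma_2\circ\sigma_1^{-1}\colon X_1\to X_2$, a homeomorphism. Because the facial circles are intrinsic to $X$, $\phi$ carries boundaries of faces of $X_1$ precisely onto boundaries of faces of $X_2$, inducing a bijection $F\mapsto F'$ between the two face sets. I then extend $\phi$ to a bijection $h\colon\BS^2\to\BS^2$ as follows: on each closed face $\cls F$, which is a disc, extend the boundary homeomorphism $\phi|_{\partial F}\colon\partial F\to\partial F'$ to a homeomorphism $\cls F\to\cls{F'}$; this is possible by the Schoenflies theorem together with the standard fact that a homeomorphism between two circles extends across the discs they bound. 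The resulting $h$ agrees with $\phi$ on $X_1$ and is a bijection of $\BS^2$.

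The main obstacle is the continuity of $h$, after which we are done, since a continuous bijection from the compact space $\BS^2$ to the Hausdorff space $\BS^2$ is automatically a homeomorphism. Continuity is immediate on each closed face and on $X_1$; the only difficulty is at points $x\in X_1$ that are limits of points lying in infinitely many distinct faces. Here I would use that, as a locally connected continuum, $X_1$ has complementary faces forming a \emph{null family}: for every $\eps>0$ only finitely many faces have diameter exceeding $\eps$. Hence if $y_n\to x\in X_1$ with $y_n$ in faces $F_n$, the discs $\cls{F_n}$ shrink toward $x$, their boundaries converge to $x$, and since $\phi$ is (uniformly) continuous the image discs $\cls{F_n'}$ shrink toward $\phi(x)=h(x)$, so $h(y_n)\to h(x)$. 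Establishing this null-family behaviour, and transferring the uniform control through $\phi$, is the technical heart of the argument, and it is precisely where compactness and local connectedness of $X$ are indispensable.
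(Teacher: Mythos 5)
First, a point of reference: the paper gives no proof of this lemma at all --- it is quoted as \cite[Theorem~2]{ThomassenRichter} and used as a black box --- so there is nothing internal to compare your argument against. Measured against the known proof of that theorem, your architecture is the right one and essentially the standard one: recognise face boundaries intrinsically, extend a boundary identification over each face by Schoenflies, and get continuity from the fact that the complementary domains of a Peano continuum in $\BS^2$ form a null family. Your ``if'' direction (a circle with connected complement in $X$ must bound a face on one side), the face-versus-facial-circle bijection, the disc-by-disc extension, and the closing compactness trick are all sound; the null-family fact you invoke is classical and is indeed exactly what makes $h$ continuous at points of $X_1$ approached through infinitely many faces.

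The genuine gap is the ``only if'' direction of your characterisation --- that a circle $C$ bounding a face cannot separate $X$ --- which you dispatch in a single clause (``a Jordan-curve/planarity argument reduces this separation to one effected by at most two points of $C$''). This is the step where $3$-connectedness and planarity actually interact, and it is the heart of the theorem rather than the continuity step you single out as the technical core. To carry it out one must at least: (a) pass to a component $A'$ of one side $A$ of the putative separation, note that $\cls{A'}\setminus A'$ is a closed subset $T_{A'}$ of $C$ (here one already needs that components of open subsets of $X$ are open and that their closures are again arcwise connected Peano continua), and dispose of the case $\abs{T_{A'}}\le 2$ directly, since $T_{A'}$ is then itself a separator of size at most two; (b) in the remaining case show that the traces $T_{A'}$, $T_{B'}$ of components of the two sides cannot interleave on $C$, via a Janiszewski-type separation argument in $\cls{D_2}$ using an auxiliary arc through the face $D_1$ --- and this must also handle $T_{A'}\cap T_{B'}\neq\emptyset$ and the fact that $A$ and $B$ may have infinitely many components; and (c) convert non-interleaving into an explicit pair of points of $C$ whose removal disconnects $X$. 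None of (a)--(c) is automatic, and as written the crucial use of the hypothesis is asserted rather than proved. Everything else in your proposal would go through once this step is supplied.
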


\subsection{Babai's contraction lemma} \label{sec Bab}

\begin{lemma}[{\cite{BabCon}}] \label{babai}
Let $\Gamma$ be a group acting freely on a 
connected graph \G. Then \ti\ a connected subgraph $D$ of $G$ meeting each $\Gamma$-orbit at exactly one vertex, \st\ the contraction $G/D$ is a Cayley graph of $\Gamma$.
\end{lemma}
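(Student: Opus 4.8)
The plan is to build an explicit \emph{connected} fundamental domain $D$ by lifting a spanning tree of the quotient graph, and then to recognise the contracted graph $G/D$ as a graph carrying a regular $\Gamma$-action, which is precisely the defining feature of a Cayley (multi-)graph. Freeness of the action is what makes both halves work.

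First I would pass to the quotient multigraph $H := G/\Gamma$, whose vertices are the $\Gamma$-orbits of $V(G)$ and whose edges are the orbits of $E(G)$; it is connected because $G$ is, and since $\Gamma$ acts freely the quotient projection $p \colon G \to H$ restricts to a bijection between each $\Gamma$-orbit and the single vertex of $H$ below it. Fix a spanning tree $T$ of $H$ rooted at a vertex $r$, choose any $v_0 \in p^{-1}(r)$, and lift $T$ to $G$ by induction on the (finite) distance $d_T(r,\cdot)$: if a tree edge $t$ joins an already-lifted vertex $\tilde u$ (with $p(\tilde u)=u$) to a child $v$, then some edge in the fibre $p^{-1}(t)$ is incident with $\tilde u$, and I set $\tilde v$ to be its other endpoint, so that $p(\tilde v)=v$. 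Then $v\mapsto \tilde v$ is injective, and the lifted edges form a subgraph $D$ isomorphic to $T$ via $p$ (any cycle in $D$ would project to a cycle in the tree $T$); hence $D$ is a connected subgraph of $G$ meeting each $\Gamma$-orbit in exactly one vertex. This is the desired fundamental domain.

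Next I would record the resulting partition of $G$ into translates. Each orbit $\Gamma v$ is $\Gamma$-invariant, so every set $gV(D)$ is again a transversal; and if $gv=hw$ with $v,w\in V(D)$ then $v,w$ lie in one orbit, forcing $v=w$ and then $g=h$ by freeness. Thus $\{\,gV(D) : g\in\Gamma\,\}$ is a partition of $V(G)$ indexed bijectively by $\Gamma$. Contracting each connected subgraph $gD$ to a point therefore produces a graph $C := G/D$ whose vertex set is canonically $\Gamma$, and the action $\Gamma \act G$, which permutes the translates via $g\cdot hD = ghD$, descends to an action $\Gamma \act C$ by automorphisms acting as left multiplication on $V(C)=\Gamma$, hence freely and transitively.

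Finally I would invoke the standard fact (Sabidussi's characterisation) that a graph admitting a regular automorphism group isomorphic to $\Gamma$ is a Cayley multi-graph of $\Gamma$. Concretely, letting $e\in V(C)$ be the vertex corresponding to $D$ and taking $S$ to be the multiset of other endpoints of the edges of $C$ at $e$, the equivariance $\{x,y\}\in E(C)\iff\{e,x^{-1}y\}\in E(C)$ shows $S$ is symmetric and that $C=Cay(\Gamma,S)$, with $S$ generating $\Gamma$ because $C$, being a contraction of the connected graph $G$, is connected; parallel edges and loops are simply absorbed into the multiset $S$. The only step demanding genuine care, and the main obstacle, is the first one: obtaining a transversal that is \emph{simultaneously} connected and meets each orbit exactly once, which is exactly why lifting a spanning tree of the quotient—rather than choosing representatives arbitrarily—is needed. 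Everything after that is bookkeeping with the free action.
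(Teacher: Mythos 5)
The paper does not prove this lemma itself---it is imported verbatim from Babai \cite{BabCon}---so there is no in-paper argument to compare against. Your proof is correct and is essentially the standard argument behind the cited result: lift a spanning tree of the quotient $G/\Gamma$ edge by edge to obtain a connected transversal $D$, use freeness to see that the translates $gD$ partition $V(G)$ and are indexed bijectively by $\Gamma$, and then apply Sabidussi's criterion to the induced regular action on the contraction $G/D$. Two minor points of hygiene: the remark that a cycle of $D$ would project to a cycle of $T$ is both slightly loose (a cycle could a priori project to a closed walk) and unnecessary, since only connectedness and the one-vertex-per-orbit property of $D$ are ever used; and in the multigraph form of Sabidussi's argument one should state explicitly that vertex-regularity forces the edge multiplicity between $g$ and $h$ to depend only on $g^{-1}h$, which is exactly what lets you read off the generating multiset $S$ (with loops at the contracted vertex accounting for copies of the identity in $S$, consistent with the paper's definition of a Cayley multi-graph in Section~\ref{sec GA}).
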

Here, $G/D$ is the graph obtained from $G$ by contracting each $\Gamma$-translate of $D$ into a vertex. One can think of $D$ as the graph-theoretic analogue of a fundamental domain. 

\subsection{Existence of a regular orbit}
Given a group action $\Gamma \act X$, the \defi{orbit} $\Gamma x$ of a point $x\in X$ is the set $\{gx \mid g\in \Gamma\}$. 

The following is rather an exercise: 
\begin{lemma} \label{reg orb}
For every faithful, \pd\ action $\Gamma \act X$ on a connected mani\-fold $X$, \ti\ an orbit $O=\Gamma x$ \st\ the restriction of the action to $O$ is \defi{regular}. 
\end{lemma}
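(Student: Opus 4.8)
The plan is to reduce the statement to finding a single point with trivial stabiliser, and then to produce such a point by a Baire category argument. First I would observe that the restriction of the action to any orbit $O=\Gamma x$ is automatically transitive, so that, by the definition of regularity, it suffices to find an $x\in X$ whose stabiliser $\Gamma_x:=\{g\in\Gamma\mid gx=x\}$ is trivial. Indeed, for $z=kx\in O$ the stabiliser is the conjugate $k\Gamma_x k^{-1}$, so $\Gamma_x=\{e\}$ forces every stabiliser along $O$ to be trivial, which is exactly freeness of the restricted action $\Gamma\act O$; together with transitivity this gives regularity.

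Next I would use proper discontinuity to control the stabilisers. Since $X$ is a manifold it is locally compact, so each $x$ has a compact neighbourhood $K$; if $gx=x$ then $x\in gK\cap K$, whence $\Gamma_x\subseteq\{g\mid gK\cap K\neq\emptyset\}$ is finite for a \pd\ action. In particular, every $g\neq e$ that fixes some point has finite order. The same estimate shows that each orbit meets each compact set in finitely many points; as a metrizable connected manifold, $X$ is second countable and hemicompact, so every orbit is countable, and therefore $\Gamma$ itself is countable (the fibres of the surjection $g\mapsto gx$ onto $\Gamma x$ are cosets of the finite group $\Gamma_x$).

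Now set $F_g:=\{x\in X\mid gx=x\}$ for $g\neq e$. Each $F_g$ is closed, and I claim each has empty interior. If $F_g=\emptyset$ this is trivial; otherwise $g$ fixes a point, hence has finite order, and the key input is \emph{Newman's theorem}, which asserts that a non-identity periodic homeomorphism of a connected manifold cannot fix a non-empty open set. Faithfulness guarantees $F_g\neq X$, and Newman upgrades this to empty interior. Granting the claim, $\bigcup_{g\neq e}F_g$ is a countable union of closed nowhere dense sets, hence meagre in the Baire space $X$ (locally compact Hausdorff); its complement is therefore dense, and any $x$ in it satisfies $\Gamma_x=\{e\}$. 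The orbit $O=\Gamma x$ then carries a regular action, as required.

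The only substantial obstacle is the empty-interior claim for the $F_g$. Proper discontinuity alone does not rule out a non-trivial finite-order homeomorphism with an open fixed set (consistency with proper discontinuity is precisely why the naive argument fails), so the obstruction is purely topological and lives in the local Euclidean structure of $X$; this step genuinely rests on Newman's theorem rather than on the dynamics of the action. Everything else — the reduction to trivial stabilisers, the finiteness of stabilisers, the countability of $\Gamma$, and the Baire argument — is routine, which is no doubt why the lemma is advertised as an exercise.
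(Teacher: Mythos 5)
Your proof is correct, but it takes a different route from the paper's. The paper argues in the quotient: it cites the facts that the quotient of a properly discontinuous action on a manifold is an orbifold and that the singular locus of an orbifold has empty interior, so some point of the quotient is non-singular and its preimages have trivial stabilisers. You argue upstairs in $X$: after the same reduction to finding a single point with trivial stabiliser, you show each fixed-point set $F_g$ ($g\neq e$) is closed with empty interior via Newman's theorem, establish that $\Gamma$ is countable using proper discontinuity and the $\sigma$-compactness of a connected metrizable manifold, and conclude by Baire category. The two arguments are close cousins --- the empty-interior statement for the orbifold singular locus is itself essentially Newman's theorem in local form --- but they distribute the work differently: the paper's version hides the Baire-type step inside the cited orbifold facts and never needs the countability of $\Gamma$, while yours makes the topological input (Newman) and the category argument explicit and avoids orbifold language altogether. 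Both are legitimate; your only nontrivial external input is Newman's theorem, and you correctly isolate it as the one step that proper discontinuity alone cannot supply.
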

One way to prove this for example is using the well-known facts that  
the quotient space of every \pd\ action on a manifold is an orbifold (see e.g.\ \cite[Proposition~20]{BorRie}), and that the singular locus of any orbifold has empty interior (see e.g.\ \cite[Proposition~26]{BorRie}); in particular, \ti\ at least one point of the quotient that does not lie in the singular locus, which means by definition that its preimages have trivial stabilisers.

\subsection{Richards' classification of non-compact 2-manifolds}
The following is the special case of Richards' \cite{Richards}\footnote{Richards calls this `Ker\'ekj\'art\'o's theorem', but mentions that `Ker\'ekj\'art\'o's proof seems to contain certain gaps'.} classification of non-compact 2-manifolds when restricting to subspaces of $\BS^2$.
\begin{theorem}[\cite{Richards}] \label{Richards}
Let $X$ and $X'$ be two 2-manifolds contained in $\BS^2$. Then $X$ and $X'$ are homeomorphic \iff\ their Freudenthal boundaries are homeomorphic.
\end{theorem}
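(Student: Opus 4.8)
The plan is to prove the two implications separately; the forward one is routine and the reverse one carries all the weight. First I would record a reduction: by invariance of domain, any $2$-manifold contained in $\BS^2$ is an \emph{open} subset of it, so $X$ and $X'$ are connected open planar surfaces, in particular hemicompact and locally compact. Writing $\Omega=\Omega(X)$ for the Freudenthal boundary $|X|\sm X$, the set $\Omega$ is closed in the compact space $|X|$, hence compact, and it is totally disconnected and metrizable; thus $\Omega$ is homeomorphic to a closed subspace of the Cantor set (a perfect such $\Omega$ being the Cantor set itself, by \Prr{Cantor}). For the forward implication I would note that a homeomorphism $f\colon X\to X'$ is proper, and that a proper homeomorphism between hemicompact, locally compact spaces extends canonically to a homeomorphism $|f|\colon |X|\to|X'|$ of \FC s; its restriction is the desired homeomorphism $\Omega(X)\to\Omega(X')$.

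For the reverse implication I would use an exhaustion argument in the spirit of Ker\'ekj\'art\'o and Richards. The first step is to build a \emph{nice exhaustion} of $X$: a sequence $P_1\subseteq\mathrm{int}(P_2)\subseteq P_2\subseteq\cdots$ of compact connected bordered subsurfaces with $\bigcup_i P_i=X$, where each $P_i$ is a sphere with finitely many open disks removed and each component of $X\sm\mathrm{int}(P_i)$ is non-compact and meets $\partial P_i$ in a single circle. Such exhaustions exist for any open planar surface, and they exhibit the end space as an inverse limit: letting $B_i$ be the finite set of components of $X\sm\mathrm{int}(P_i)$, the inclusion-induced surjections $B_{i+1}\to B_i$ form an inverse system whose limit is canonically homeomorphic to $\Omega(X)$. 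The basic building block I would invoke is the classification of compact bordered planar surfaces: a compact connected genus-$0$ surface with $k\ge 1$ boundary circles is homeomorphic to a disk with $k-1$ holes, uniquely up to homeomorphism, and, crucially, every homeomorphism between the boundary circles of two such pieces extends to a homeomorphism of the pieces. This last fact rests only on the Schoenflies theorem.

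Given a homeomorphism $h\colon\Omega(X)\to\Omega(X')$, I would then run a back-and-forth construction. Realising both end spaces as inverse limits $\varprojlim B_i$ and $\varprojlim B_i'$, I would pass to cofinal refinements of the two exhaustions so that $h$ is induced by a level-wise bijection $B_i\to B_i'$ commuting with the bonding maps; this matching of the combinatorial trees of the exhaustions is the technical heart of the argument. With the matching in hand, I would construct the homeomorphism piece by piece: a homeomorphism of the cores $P_1\to P_1'$ (possible since they are disks-with-holes with equal numbers of boundary circles), and then, inductively, an extension across each collar $P_{i+1}\sm\mathrm{int}(P_i)\to P_{i+1}'\sm\mathrm{int}(P_i')$ agreeing on the already-built boundary, using the extension property of the building block. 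The compatible pieces glue to a continuous bijection $\phi\colon X\to X'$ that is a homeomorphism on each $P_i$; to certify $\phi$ as a homeomorphism I would extend it by $h$ to a continuous bijection $|X|\to|X'|$ and invoke that a continuous bijection from a compact space to a Hausdorff space is a homeomorphism (the lemma of \cite{armstrong} quoted earlier), so that $\phi=\phi\restriction X$ is the map we want.

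The main obstacle is the middle step of the reverse direction: arranging, by mutual refinement of the two exhaustions, that the given $h$ is realised by bonding-map-compatible bijections $B_i\to B_i'$, while simultaneously keeping the boundary homeomorphisms across successive collars compatible so that the glued map is genuinely continuous rather than merely a bijection on each piece. Everything else is either standard surface topology (existence of the exhaustion, the Schoenflies-based building block) or the soft point-set facts of the first paragraph. I note finally that if one is willing to invoke \Cr{cor FX i}, the reverse direction shortcuts considerably: it identifies $|X|$ and $|X'|$ with $\BS^2$, reducing the problem to extending a homeomorphism between two compact, totally disconnected subsets of $\BS^2$ to an ambient self-homeomorphism of $\BS^2$. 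I would avoid this route here only to sidestep a possible circularity, since \Cr{cor FX i} appears to lie at the same depth as the present theorem.
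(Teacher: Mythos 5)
The paper does not prove \Tr{Richards} at all: it is stated as a quoted special case of Richards' classification of non-compact surfaces, with a citation in place of a proof. So there is no argument in the paper to compare yours against; what you have written is essentially a reconstruction of the standard Ker\'ekj\'art\'o--Richards exhaustion argument that the cited source carries out. Your first paragraph (openness via invariance of domain, compactness and total disconnectedness of the end space, the forward implication via functoriality of the \FC\ under proper maps) is correct and routine, and the overall architecture of the reverse direction --- nice exhaustions by compact genus-$0$ bordered pieces, the end space as $\varprojlim B_i$, a back-and-forth matching, and collar-by-collar gluing --- is the right one.

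Two points in the reverse direction need repair or completion. First, the ``building block'' claim as stated is false: it is \emph{not} true that every homeomorphism between the boundary circles of two disks-with-holes extends over the pieces. Already for the annulus, a boundary homeomorphism that is the identity on one circle and a reflection on the other acts with degrees $+1$ and $-1$ on the two generators of $H_1$, which must agree for any self-homeomorphism of the annulus; so it does not extend. The correct statement requires the boundary data to be orientation-compatible, and this compatibility has to be threaded through your inductive construction (fix orientations of all separating circles at the outset and only ever prescribe orientation-consistent circle homeomorphisms). This is standard and fixable, but as written the induction would break. Second, the step you yourself identify as ``the technical heart'' --- interleaving the two exhaustions so that $h$ is realised by bonding-compatible bijections $B_i\to B_i'$ while keeping the collar homeomorphisms coherent --- is named but not executed, and it is where essentially all of the work in Richards' proof lives. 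Finally, your closing worry about circularity is unfounded in this paper: \Cr{cor FX i} is proved via Rad\'o's triangulation theorem, \Lr{FG embeds} and Whitney's theorem, with no appeal to \Tr{Richards}. Combined with the classical fact that any homeomorphism between compact totally disconnected subsets of $\BS^2$ extends to an ambient homeomorphism of $\BS^2$ (true in dimension $2$, where all such sets are tamely embedded), that shortcut is the cleanest proof available with the paper's own toolkit, and I would recommend it over the exhaustion argument here.
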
 

\section{Separating faces and ends of planar graphs with cycles}

In this section we prove some basic facts about planar graphs that we will need later, which have nothing to do with group actions. 

If $\{V_1,V_2\}$ is a bipartition of the vertex set $V(G)$ of a graph \G, then the set of edges with exactly one endvertex in each $V_i$ is called a  \defi{cut} of \G.

The following is a rather trivial consequence of the definition of the \FC
\begin{lemma}[{\cite[Lemma~8.5.5~(ii)]{diestelBook05}}] \label{cut arc}
Let \g be a connected and locally finite graph, and $B$ a finite cut of \G. Then for every arc $A$ in $|G| \sm B$, the endpoints of $A$ lie in the same component of $G \sm B$.
\end{lemma}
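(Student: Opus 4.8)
The plan is to prove that a finite cut $B$ genuinely separates $|G|$ into pieces that cannot be reconnected by arcs, using the basic topology of the \FC. Let $\{V_1, V_2\}$ be the bipartition of $V(G)$ giving rise to $B = E(V_1, V_2)$. Since $B$ is finite and $G$ is locally finite and connected, I first want to understand $G \sm B$: after removing the finitely many edges of $B$, the graph splits into the two vertex classes $V_1, V_2$ with no edges between them, but of course each $V_i$ may itself split into several components. I would then pass to the \FC\ and argue that the closures of the (finitely many) components of $G \sm B$ are pairwise disjoint in $|G| \sm B$, so that no arc can pass from one to another.

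The key steps, in order, are as follows. First I would note that because $B$ is finite, the two sides $\cls{V_1}$ and $\cls{V_2}$ (closures taken in $|G|$) are disjoint: this is exactly the content of \Lr{cut arc}'s natural companion, the statement that a finite cut corresponds to disjoint closures, which is the basic fact about the \FC\ of a locally finite graph (every end lies on a definite side of a finite cut, since for large enough $K_i$ in the defining sequence the components of $G \sm K_i$ stay on one side of $B$). Second, given an arc $A$ in $|G| \sm B$ with endpoints $u, v$, I would suppose for contradiction that $u$ and $v$ lie in different components of $G \sm B$. Since these components partition into the two sides $V_1$ and $V_2$, the endpoints lie on opposite sides, so $u \in \cls{V_1}$ and $v \in \cls{V_2}$ (or vice versa). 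Third, I would use connectedness of the arc: the arc $A$ is a connected subset of $|G| \sm B$, and its image meets both $\cls{V_1}$ and $\cls{V_2}$. But $\cls{V_1} \sm B$ and $\cls{V_2} \sm B$ are disjoint, and together with $B$ they cover $|G|$; so $A \subseteq |G|\sm B = (\cls{V_1}\sm B) \sqcup (\cls{V_2}\sm B)$ is a connected set written as a disjoint union of two relatively open (equivalently closed) nonempty pieces, contradicting connectedness.

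The crux of the argument is establishing that removing the open set corresponding to the edges of $B$ really does disconnect the two sides in the topology of $|G|$, i.e. that $\cls{V_1}$ and $\cls{V_2}$ meet only inside $B$. Concretely, the point is that an end of $G$ cannot be a limit of vertices from both $V_1$ and $V_2$ once $B$ is finite: choosing $K_i$ in the defining sequence of compacta large enough to contain all endvertices of $B$, every component of $G \sm K_i$ avoids $B$ and hence lies entirely in one of $V_1, V_2$, so every end is assigned unambiguously to one side, and its basic open neighbourhoods (which avoid $B$ after deleting $\kreis{B}$) lie entirely on that side. I expect this step --- pinning down that the two closures are separated in $|G| \sm B$ --- to be the main obstacle, as the rest is a routine connectedness argument; but it follows directly from the definition of the \FC\ in \Sr{sec FC}, so once that separation is in hand the lemma is immediate.
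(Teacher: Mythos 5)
First, a remark on the comparison itself: the paper does not prove this lemma at all --- it is quoted from Diestel's book and described as ``a rather trivial consequence of the definition of the \FC'' --- so there is no in-paper argument to measure yours against. Your strategy (a finite cut forces the closures of the two sides to be disjoint in $|G|$, after which a connectedness argument on the arc finishes the job) is exactly the standard proof of the cited result, and the key topological input --- that a sufficiently large $K_i$ containing all endvertices of $B$ assigns every end unambiguously to one side --- is correctly identified and correctly justified.

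There are, however, two places where the write-up does not quite prove the statement as phrased. The substantive one is the step ``since these components partition into the two sides $V_1$ and $V_2$, the endpoints lie on opposite sides'': this is a non sequitur, because two distinct components of $G \sm B$ may lie on the \emph{same} side of the bipartition ($G[V_1]$ need not be connected), so your argument only shows that the endpoints lie on the same \emph{side}, not in the same \emph{component}. The repair is short: if $C$ is the component of $G \sm B$ containing one endpoint, then $E(C, V(G)\sm C)$ is itself a cut, it is contained in $B$ (every edge leaving $C$ must belong to $B$) and hence finite, and $A$ avoids its interior; applying your two-sides argument to \emph{this} cut forces the other endpoint into $\cls{C}$ as required. (For what it is worth, the ``same side'' version is all the paper ever uses, in \Lr{end sep} and in the proof of \Lr{lem fin fac}.) Secondly, the claimed decomposition of $|G|\sm B$ into $\cls{V_1}\sm B$ and $\cls{V_2}\sm B$ is not a cover: an interior point of an edge with both ends in $V_1$ lies in neither closure of a \emph{vertex} set. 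You should instead write $|G|\sm\kreis{B}$ as the disjoint union of the closures of the two induced subgraphs $G[V_1]$ and $G[V_2]$, edges included; with that correction these two sets are closed and disjoint by your end argument, and the connectedness of $A$ then yields the conclusion.
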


The next two lemmas allow us to separate faces and ends of a plane graph by cycles.
\begin{lemma}\label{face sep}	
Let $G$ be a 2-connected graph and $\phi:|G| \to \BS^2$ an embedding. For any two faces $F,H$ of $\phi$, there is a cycle $C$ in $G$ \st\ $\phi(C)$ separates $F$ from $H$. 
Moreover, for every end $\oo\in \OO(G)$ \st\  $\phi(\oo)\not\in \partial F$, there is a  cycle $D$ in $G$ \st\ $\phi(D)$ separates $F$ from $\phi(\oo)$. 
\end{lemma}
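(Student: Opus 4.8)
The plan is to apply \Lr{scc} to learn that face boundaries are Jordan curves, and then to realise the required separating cycle as the boundary of the face of a large \emph{finite} $2$-connected subgraph of $G$.

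First, some preliminaries. Since $|G|$ embeds in the compact space $\BS^2$, it is itself compact, so $G$ is locally finite; moreover $|G|$ is connected and locally connected, and the $2$-connectedness of the graph $G$ makes $\phi(|G|)$ a $2$-connected subset of $\BS^2$ in the topological sense. Thus \Lr{scc} applies and tells us that $\partial F$ is a simple closed curve $J_F\subseteq\phi(|G|)$. By the Jordan curve theorem $J_F$ separates $F$ from the rest of $\BS^2\setminus\phi(|G|)$, and in particular from $H$ (in the first statement) and from $\phi(\oo)$ (in the second: since $\phi(\oo)\notin\partial F$ while $\phi(\oo)\in\phi(|G|)$, we get $\phi(\oo)\notin\overline F$). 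Hence both statements reduce to the following common claim: given a face $F$ and a point $t\in\BS^2\setminus\overline F$ (namely $t\in H$, respectively $t=\phi(\oo)$), there is a \emph{finite} cycle of $G$ whose image separates $F$ from $t$. The curve $J_F$ already separates them, but it may run through ends of $G$ and so need not be the image of a finite cycle; replacing it by a finite cycle is the whole problem.

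To produce finite cycles, I would fix a point $x\in F$ and exhaust $G$ by an increasing sequence $G_1\subseteq G_2\subseteq\cdots$ of finite $2$-connected subgraphs with $\bigcup_n G_n=G$ (such an exhaustion exists for any locally finite $2$-connected graph, e.g.\ via an ear decomposition). Each $\phi(G_n)$ is a finite $2$-connected plane graph, so every one of its faces is bounded by a cycle; let $F_n$ be the face of $\phi(G_n)$ containing $x$ and let $C_n:=\partial F_n$, a cycle of $G_n\subseteq G$. Since $F$ is connected and disjoint from $\phi(G_n)$, we have $F\subseteq F_n$, so $F$ lies on one (open) side of the Jordan curve $\phi(C_n)$. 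It therefore suffices to show that for some $n$ the point $t$ lies on the \emph{other} side, i.e.\ that $t\notin\overline{F_n}$ (equivalently, that $\bigcap_n\overline{F_n}=\overline F$). Once this holds, $\phi(C_n)$ is the desired cycle; and in the first statement, as $H$ is connected and disjoint from $\phi(C_n)\subseteq\phi(|G|)$, having one point $t\in H$ on the far side places all of $H$ there, so $\phi(C_n)$ separates $F$ from $H$.

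The main obstacle is exactly this last step, and it is where the ends of $G$ must be controlled. For a point $t$ in another face $H$ there is genuine graph structure of $\phi(|G|)$ between $t$ and $x$, and the difficulty is only to rule out that the two are joined, for every $n$, by an arc in $F_n$ escaping to $t$ through a shrinking gap near an end on $J_F$; for $t=\phi(\oo)$ an end, $t$ itself lies on $\phi(|G|)$ and the separation is entirely a statement about ends. In both cases I would use \Lr{cut arc}: choose a finite cut $B$ of $G$ isolating a neighbourhood of the relevant end — a component $U$ of $G\setminus K$ with $K$ finite — whose image $\phi(\overline{U\cup\OO(U)})$ is small enough to miss $\overline F$ and (in the end case) to contain $t$ in its interior; once $n$ is large enough that $B\subseteq G_n$, \Lr{cut arc} prevents any arc avoiding $\phi(G_n)$ from crossing from the $x$-side of $B$ into this neighbourhood, forcing $t$ and $x$ into different faces of $\phi(G_n)$ and hence $t\notin\overline{F_n}$. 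I expect the delicate part to be precisely this end-control: matching the topological cut $B$ in $\BS^2$ with the combinatorial cut, and applying \Lr{cut arc} to an arc that a priori lives in $\BS^2$ rather than in $|G|$.
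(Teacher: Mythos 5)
Your overall strategy---exhaust $G$ by finite $2$-connected subgraphs $G_n$, let $F_n$ be the face of $\phi(G_n)$ containing $F$, and take $C_n=\partial F_n$---is a legitimate alternative framework, and the reduction to the claim that $t\notin\cls{F_n}$ for some $n$ is correct. But that claim is exactly where the whole difficulty sits, and your sketch does not establish it. The tool you invoke, \Lr{cut arc}, only governs arcs that live inside $|G|$; the arcs witnessing $t\in F_n$ live in $\BS^2\sm\phi(G_n)$ and may run through faces of $\phi$. To convert a finite edge-cut $B$ of $G$ into a separator of the sphere one must adjoin the closures of all faces whose boundaries meet $B$, and in the present lemma those boundaries may be infinite---this is precisely why the paper's \Lr{end sep}, which carries out exactly that construction, has to assume that no face-boundary is infinite, an assumption not available here. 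There is also a concrete problem with your plan in the face-versus-face case: the end through which the hypothetical arcs from $x$ to $t$ escape lies on $\partial F$ itself, so no basic open neighbourhood of it can ``miss $\cls{F}$'' as you require. You flag this step as ``the delicate part'', but it is not a technicality to be matched up later; it is the entire content of the lemma. (A smaller point: $|G|$ is compact because $G$ is locally finite, not because it embeds in the compact space $\BS^2$---subspaces of compact spaces need not be compact.)

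For comparison, the paper builds the separating cycle in one step from the Jordan curve $\partial F$ supplied by \Lr{scc}, with no exhaustion: take a path $P$ in $G$ from $\partial F$ to $\partial H$ (respectively, to the boundary of a basic open neighbourhood $O$ of $\oo$ with $\cls{O}\cap\cls{F}=\emptyset$), let $p$ be its first vertex and $u,v$ the two neighbours of $p$ on $\partial F$; since $\partial F$ minus $p$ and its two incident edges is a $u$--$v$ arc avoiding $P$ (and $\partial O$), there is a $u$--$v$ path $R$ in $G-P$ (respectively in $G\sm(P\cup\partial O)$), and the cycle $upvRu$ is crossed exactly once, at $p$, by the arc obtained by extending $P$ into $F$ and into $H$ (respectively by a ray of $\oo$ inside $O$); hence it separates. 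Note that once such a cycle exists, any $G_n$ containing it already separates $x$ from $t$---so a correct completion of your approach essentially has to produce the paper's cycle anyway.
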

\begin{proof}
If any of $F,H$ is bounded by a cycle $C$ of $G$ then we are done, so assume this is not the case. Let $P$ be a (possibly trivial) \pth{\partial F}{\partial H} in \G, and let $p=P\cap \partial F$ be its starting vertex. Let $u,v$ be the two neighbours of $p$ on $\partial F$, which are distinct because we are assuming $F$ is not bounded by two parallel edges. By \Lr{scc}, there is a $u$-$v$ arc in $|G|\sm P$, namely $\partial F$ with $p$ and its two incident edges removed. Thus, easily, there is also a \pth{u}{v} $R$ in $G - P$. We claim that the cycle $C:=Rvpu$ separates $F$ from $H$. Indeed, the path $P$, concatenated with any two arcs inside $F$ and $H$ respectively connects $F$ to $H$ and crosses $C$ exactly once (at $p$), and so $F,H$ lie in distinct sides of $\BS^2 \sm \phi(C)$.
\medskip

To separate $F$ from $\oo$, we follow the same idea, replacing $H$ with a basic open neighbourhood $O\ni \oo$, \st\ the closures $\cls{F},\cls{O}$ are disjoint. Let $P$ be a \pth{\partial F}{\partial O} in \G, and notice that $P\cap O=\emptyset$. As above, let $u,v$ be the two neighbours of $p:= P \cap \partial F$ on $\partial F$. There is now a  \pth{u}{v} $R'$ in $G \sm (P \cup \partial O)$ because $\partial F$ contains a $u$-$v$ arc in $|G|\sm (P \cup \partial O)$. Then $D:=R'vpu$ is a cycle separating $F$ from $\phi(\oo)$, because appending a ray of \oo\ inside $O$ to $P$ we can obtain an $F$-$\phi(\oo)$~arc crossing $D$ exactly once.
\end{proof}

\begin{lemma}\label{end sep}	
Let $G$ be a connected locally finite graph and $\phi:|G| \to \BS^2$ an embedding with no infinite face-boundaries. For any two ends $\oo,\chi\in \OO(G)$, there is a cycle $C$ in $G$ \st\  $\phi(\oo)$ and  $\phi(\chi)$ lie in distinct components of  $\BS^2 \sm \phi(C)$. 
\end{lemma}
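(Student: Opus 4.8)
The plan is to surround the end $\omega$ by a cycle that excludes $\chi$: I would separate $\omega$ from $\chi$ by a finite cut, and then realise the resulting topological boundary as an honest cycle of $G$ via \Lr{scc}.

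First I would produce the cut. Since $G$ is locally finite and $\omega\neq\chi$, some finite vertex set $S$ separates a ray of $\omega$ from a ray of $\chi$. Let $X$ be the component of $G-S$ containing the tail of the $\omega$-ray, put $Y:=V(G)\setminus X$, and let $B:=E(X,Y)$; then $B$ is finite, being incident only with the finite set $S$. Writing $\overline X,\overline Y$ for closures in $|G|$, the finiteness of $B$ gives $\overline X\cap\overline Y=\emptyset$ (an arc of $|G|$ meeting both closures would contradict \Lr{cut arc}), so $K:=\phi(\overline X)$ and $\phi(\overline Y)$ are disjoint compact subsets of $\BS^2$, with $\phi(\omega)\in K$ and $\phi(\chi)\notin K$.

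Next I would apply \Lr{scc} to $K$. Granting that $K$ is $2$-connected and locally connected, every face of $K$ is bounded by a simple closed curve lying in $K$. Let $W$ be the face of $K$ (i.e.\ the component of $\BS^2\setminus K$) containing $\phi(\chi)$, and let $J\subseteq K$ be its bounding simple closed curve. Since $J\subseteq K=\phi(X)\cup\phi(\Omega(X))$ and, as I argue below, $J$ avoids the images of all ends, the curve $J$ consists only of vertices and edges of $G$ and is therefore a cycle $C$ of $G$. Finally $\BS^2\setminus\phi(C)=W\sqcup W'$ with $W'$ the other component; we have $\phi(\chi)\in W$, while $\phi(\omega)\in K$ forces $\phi(\omega)\notin W$, and $\phi(\omega)\notin J$ because $\phi(\omega)$ is an end, so $\phi(\omega)\in W'$. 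Hence $\phi(\omega)$ and $\phi(\chi)$ lie in distinct components of $\BS^2\setminus\phi(C)$, as required.

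The crux is verifying the two standing assumptions, and this is exactly where the hypothesis of no infinite face-boundaries enters — and it is genuinely necessary, since in the infinite ladder (which does have an infinite face-boundary) every finite cycle bounds only finitely many of the square faces, so the two ends there cannot be separated by any cycle at all. I expect the local connectedness of $K=\phi(\overline X)$ to follow from all faces of $\phi$ being finite cycles, so that near every point of $K$, in particular near every end lying in $\overline X$, the set $K$ does not break up into infinitely many far-apart pieces; and I expect that an end, being an accumulation point of infinitely many finite faces of $\phi$, cannot lie on the boundary $J$ of the single face $W$, which is what forces $J$ to miss all ends. The remaining point, the $2$-connectedness of $K$, I would secure by first reducing to the case in which the relevant part of $G$ is $2$-connected (suppressing bridges and cut vertices irrelevant to the separation of $\omega$ from $\chi$), so that $K$ inherits $2$-connectedness. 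Establishing these two topological facts about $K$ from the no-infinite-face-boundary hypothesis is the main obstacle.
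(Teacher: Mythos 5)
Your setup (a finite cut $B=E(X,Y)$ with $\omega,\chi$ on opposite sides) matches the paper's, and your endgame via the Jordan curve theorem is fine, but the middle of the argument has two genuine gaps, and they sit exactly where you yourself flag the ``main obstacle''. First, \Lr{scc} requires $K=\phi(\cls{X})$ to be $2$-connected, and the lemma does not assume $G$ is $2$-connected: $G$ may have bridges and cut vertices (e.g.\ two copies of $\Z^2$ joined by a path can be embedded with all face-boundaries finite), so $X$ may contain bridges and $K$ then fails $2$-connectedness, and the face of $K$ containing $\phi(\chi)$ is not bounded by a simple closed curve. Your proposed fix --- ``suppressing bridges and cut vertices irrelevant to the separation'' --- is not a construction: deleting parts of $G$ merges faces and can create infinite face-boundaries, destroying the very hypothesis you need, and it is not clear in advance which $2$-connected piece of $G$ ``witnesses'' the separation. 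Second, and more seriously, the claim that the bounding simple closed curve $J$ avoids all ends (so that it is a cycle of $G$) is asserted only via the heuristic that an end ``cannot lie on the boundary of the single face $W$''. But $W$ is a face of the smaller compact set $K$, not of $\phi(|G|)$, and a face of $K$ can perfectly well have an end of $X$ on its boundary in general (take $X$ to be a one-way ladder: the outer face of $\phi(\cls{X})$ has the end of $X$ in its boundary). Ruling this out really does require the no-infinite-face-boundary hypothesis for the ambient graph $G$, and turning that into a proof (roughly: if $W$ accumulated at $\phi(\omega')$, then near $\phi(\omega')$ the set $W$ is a union of faces of $\phi$, and one must extract from their finite boundaries an infinite facial walk of $G$) is essentially the entire content of the lemma; it is not a detail that can be deferred.

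For comparison, the paper avoids both issues by never working with the infinite compact set $K$: it takes the finite cut $B$, forms the \emph{finite} auxiliary plane graph $H:=\bigcup\{\partial F : F \text{ a face of }\phi \text{ with } \partial F\cap B\neq\emptyset\}$ (finite precisely because every face-boundary is finite and each edge of $B$ lies on at most two of them), shows via \Lr{cut arc} that $\phi(\omega)$ and $\phi(\chi)$ lie in distinct faces of $H$, and then extracts the separating cycle from a face-boundary of the finite plane graph $H$, where no Thomassen--Richter machinery and no $2$-connectedness is needed. If you want to salvage your approach, the honest comparison is that your route would need a Carath\'eodory-type statement for faces of compact connected locally connected sets (to drop $2$-connectedness) plus a genuine argument for $J\cap\phi(\OO(G))=\emptyset$; the paper's finite reduction is the cleaner way to make the hypothesis do its work.
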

\begin{proof}
Let $B$ be a finite cut of \g separating $\oo$ from $\chi$ in $|G|$, which exists by the definitions.

Let $U:= \bigcup \{ \partial F \mid F \text{ is a face of $\phi$ with } \partial F \cap B \neq \emptyset. \}$. Note that $B \subseteq U$. Let $H$ be the (finite) subgraph of \g induced by the edges in $U$. Note that $H$ is connected. We claim that $H \sm B$ is disconnected. Indeed, $H$ meets both sides of the cut $B$ of \G, and any path in $H$ between those sides would be a path in \g between the sides of $B$.

Next, we claim that $\phi(\oo)$ and $\phi(\chi)$ lie in distinct faces of $H$, where we think of $H$ as a plane graph with embedding $\phi(H)$.  To see this, let $A$ be an \arc{\oo}{\chi} in $|G|$. Easily, $A$ crosses $B$ an odd number of times because $B$ separates $\oo$ from $\chi$ (see \Lr{cut arc}). Since $B$ separates $H$, the endpoints of $\phi(A)$ lie in faces $F_\oo,F_\chi$ of $H$ separated by $B$, and so $F_\oo \neq F_\chi$. 

Then $\partial F_\oo$ contains a cycle $C$ \st\ $\phi(C)$ separates $F_\oo$ from $F_\chi$, and in particular $\phi(\oo)$ from $\phi(\chi)$ as desired.
\end{proof}

For a graph $K$ we let $E(K)$ denote its edge-set. The following says that if no element of a set of cycles in a plane graph separates two given points, then neither does the  sum (in simplicial homology) of those cycles.

\begin{lemma}\label{sep sum}	
Let $G\subset \BS^2$ be a plane graph and $\oo,\chi\in \BS^2 \sm G$. Let $K$ be a cycle of \g such that $E(K)= \sum_{1\leq i \leq k} E(C_i)$ where $C_i$ is a cycle of $G$ which does not separate $\oo$ from $\chi$, and the summation takes place in the cycle space of $G$. Then $K$ does not separate $\oo$ from $\chi$.
\end{lemma}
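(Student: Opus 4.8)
The plan is to encode the relation ``separates $\oo$ from $\chi$'' as the value of a single $\ztwo$-linear functional on the cycle space, and then to conclude by pure linearity. First I would fix an arc $A$ from $\oo$ to $\chi$ in $\BS^2$ chosen in \emph{general position} with respect to the finite plane graph $H := K \cup \bigcup_{1\le i\le k} C_i$: concretely, $A$ avoids every vertex of $H$ and meets the open edges of $H$ in finitely many points, crossing transversally at each. For an edge $e$ of $H$ let $\psi(e)\in\ztwo$ be the number of points of $A$ on $e$ reduced mod $2$, and extend $\psi$ $\ztwo$-linearly to finite edge sets by $\psi(S)=\sum_{e\in S}\psi(e)$. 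Everything will then be read off from the single arc $A$.

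The geometric heart of the argument is the Jordan curve theorem. Since $G\subset\BS^2$ is a plane graph, each cycle $C$ of $G$ is, as a subset of the sphere, a simple closed curve, so $\BS^2\sm C$ has exactly two components. Crossing $C$ transversally at an interior point of one of its edges switches these two components, whereas a crossing of $H$ at a point not on $C$ (which is the case at every crossing of $A$ with an edge \emph{not} in $E(C)$, as edges meet only at vertices and $A$ avoids vertices) keeps one inside a single component of $\BS^2\sm C$. Since $A\cap C=\bigcup_{e\in E(C)}(A\cap e)$, I would conclude that for every cycle $C$ of $G$,
\[
C \text{ separates } \oo \text{ from } \chi \Longleftrightarrow \psi(E(C))=1 .
\]

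With this equivalence in hand the conclusion is a one-line computation. Because the summation $E(K)=\sum_i E(C_i)$ takes place in the cycle space of $G$, an edge $e$ lies in $E(K)$ iff it lies in $E(C_i)$ for an odd number of indices $i$, so $[e\in E(K)]\equiv\sum_i[e\in E(C_i)]\pmod 2$. Substituting and interchanging the two finite sums gives
\[
\psi(E(K)) \;=\; \sum_{e}\psi(e)\,[e\in E(K)] \;\equiv\; \sum_{e}\psi(e)\sum_{i}[e\in E(C_i)] \;=\; \sum_{i}\psi(E(C_i)) \pmod 2 .
\]
By hypothesis each $C_i$ is non-separating, so $\psi(E(C_i))=0$ for every $i$ by the displayed equivalence; hence $\psi(E(K))=0$, and applying the equivalence once more to the cycle $K$ shows that $K$ does not separate $\oo$ from $\chi$, as desired.

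The main obstacle is the single topological input underlying $\psi$: that one may indeed choose $A$ avoiding the finitely many vertices of $H$ and meeting each of the finitely many edge-arcs of $H$ in a finite set of honest transversal crossings, together with the local claim that such a crossing switches the two sides of that edge and hence the two components of $\BS^2\sm C$ for every cycle $C$ through that edge. These are standard facts about arcs in the sphere — $\BS^2$ minus a finite vertex set is path-connected, and a path can be perturbed off tangencies and higher-order contacts with finitely many arcs — but they are where the care is needed; I would, if desired, make the existence of such an $A$ concrete by routing it through the faces of $H$ across one edge at a time, which is possible since the dual graph of the finite plane graph $H$ is connected. Everything after that reduces to the mod-$2$ linearity calculation above.
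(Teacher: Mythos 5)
Your proposal is correct and follows essentially the same route as the paper: both arguments fix an \arc{\oo}{\chi}\ $A$ in general position, characterise ``$C$ separates $\oo$ from $\chi$'' by the parity of crossings of $A$ with $C$, and conclude by $\ztwo$-linearity of this parity over the sum $E(K)=\sum_i E(C_i)$ in the cycle space. Your write-up merely makes the general-position choice of $A$ and the linear functional $\psi$ more explicit than the paper does.
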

\begin{proof}
Choose an \arc{\oo}{\chi} $A\subset \BS^2$ that meets each edge of \g in at most one point. Note that any Jordan curve in $\BS^2$, and in particular any cycle of \G, separates $\oo$ from $\chi$ \iff\ it crosses $A$ an odd number of times. Thus each $C_i$ crosses $A$ an even number of times, hence so does $K$ since adding edge sets of cycles preserves the parity of the number of crossings of $A$. We conclude that $K$ does not separate $\oo$ from $\chi$.
\end{proof}

\section{From planar \Cg s to actions on 2-manifolds} \label{sec cocomp}

In this section we prove the implication \ref{T ii} $\to$ \ref{T iii} of \Tr{tfae}. This is mainly done in \Lr{pd ext}. 
We first collect a couple of lemmas.

\begin{lemma}\label{acc clos}	
Let $X$ be a connected, locally finite graph, and suppose $\Delta \subset Aut(X)$ fixes a non-empty set $Y\subset V(X)$ and the stabiliser of each  vertex of $X$ in $\Delta$ is finite. Then all accumulation points of any orbit of $\Delta \curvearrowright |X|$ lie in the closure \cls{Y} of $Y$ in the \FC\  $|X|$ of $X$.
\end{lemma}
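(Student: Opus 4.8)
The plan is to first reduce the problem to ends, and then split into orbits of points of $X$ and orbits of ends. I would begin by observing that the hypotheses make the action $\Delta\act X$ on the graph itself \pd. Indeed, for a finite vertex set $F$ we have $\{\delta\in\Delta : \delta F\cap F\neq\emptyset\} = \bigcup_{u,v\in F}\{\delta\in\Delta : \delta u = v\}$, and each set on the right is empty or a left coset $\delta_0\,\mathrm{Stab}_\Delta(u)$, hence finite by hypothesis; so the union is finite. Since every compact subset of $X$ lies in a finite subgraph (and an edge may be replaced by its two endvertices), this verifies condition~\ref{pd ii} for $\Delta\act X$. A standard consequence is that no orbit of $\Delta\act|X|$ accumulates at a point of $X$: if infinitely many distinct orbit points lay in a compact $K\subset X$, then by local finiteness infinitely many would coincide with a single vertex or edge-point, forcing infinitely many $\delta$ to map a fixed point into $K$, against proper discontinuity. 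Thus all accumulation points lie in $\Omega(X)$, and it remains to locate these ends inside $\cls{Y}$.

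The key device will be the function $f\colon V(X)\to\Z_{\geq 0}$, $f(v):=d(v,Y)$ (graph distance). Since $\Delta$ fixes $Y$ setwise, $f(\delta v)=d(\delta v,\delta Y)=d(v,Y)=f(v)$, so $f$ is $\Delta$-\emph{invariant}. I would then record the characterisation: an end $\omega$ lies in $\cls{Y}$ \iff\ there are vertices $v_k\to\omega$ in $|X|$ with $f(v_k)$ bounded; equivalently, $\omega\notin\cls{Y}$ forces $f(v)\to\infty$ along every sequence $v\to\omega$. The only point needing care is the sub-claim that a vertex within bounded distance of a sequence converging to $\omega$ also converges to $\omega$: deep inside a basic neighbourhood $U\cup\Omega(U)$ of $\omega$ one is far from the finite separator $K$, so a short path cannot cross $K$ and hence stays in $U$.

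With this in hand the case of an orbit of a \emph{point} $z\in X$ is immediate. Suppose $\omega\in\Omega(X)$ is an accumulation point of $\Delta z$, witnessed by distinct $\delta_i$ with $\delta_i z\to\omega$; replacing $z$ by a nearby vertex we may take $z$ to be a vertex. Then $f(\delta_i z)=f(z)$ is \emph{constant}, so the vertices $\delta_i z\to\omega$ have bounded $f$, whence $\omega\in\cls{Y}$ by the characterisation above. This is the clean core of the argument.

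The remaining, and genuinely harder, case is an orbit of an \emph{end} $z\in\Omega(X)$, and I expect this to be the main obstacle. The counting used above breaks down because any arc in $|X|$ from an end to $Y$ meets infinitely many vertices, so ``$\delta_i^{-1}K$ meets this arc'' no longer bounds the number of $\delta_i$; and if one extracts, diagonally along a ray $R=r_0r_1\cdots$ of $z$, vertices $w_i=\delta_i r_{n_i}\to\omega$, their invariant values $f(w_i)=f(r_{n_i})$ need not stay bounded. The route I would take is to reduce to the point case via the common limit set $L:=\mathrm{acc}(\Delta v)$ of the vertex orbits: $L$ is independent of $v$ by bounded distance, and $L\subseteq\cls{Y}$ by applying the previous paragraph to $v=y_0\in Y$ (so that $\Delta y_0\subseteq Y$). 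Everything then hinges on showing $\mathrm{acc}(\Delta z)\subseteq L$, i.e.\ that the induced action $\Delta\act|X|$ is a \emph{convergence} action on the Freudenthal boundary: passing to a subsequence with $\delta_i v\to a$ and $\delta_i^{-1}v\to b$ (both exist by proper discontinuity and compactness of $|X|$), one expects $\delta_i w\to a$ for every $w\neq b$, giving $\omega=a\in L$ unless $z$ is the exceptional repelling end, a case dealt with separately using that $\mathrm{acc}(\Delta z)$ is closed and $\Delta$-invariant. Establishing this north--south behaviour from proper discontinuity alone is the crux; once available, $\mathrm{acc}(\Delta z)\subseteq L\subseteq\cls{Y}$ completes the proof.
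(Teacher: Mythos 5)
Your reduction to ends and your treatment of orbits of points of $X$ are correct, and the device of the $\Delta$-invariant function $f(v)=d(v,Y)$ together with the characterisation ``$\oo\in\cls{Y}$ iff some sequence of vertices converging to $\oo$ has bounded $f$'' is a clean alternative to the paper's argument for that case (the paper instead transports a fixed \pth{w}{v} with $v\in Y$ and argues that only finitely many of its translates can meet the boundary of a basic open set). Up to that point nothing is missing.

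The gap is the case of an orbit of an \emph{end}, which you yourself flag as the crux and do not prove. You propose to derive it from a convergence (north--south) property of $\Delta\act|X|$, but you give no argument that proper discontinuity plus finite vertex stabilisers yields such dynamics on the Freudenthal boundary of an arbitrary locally finite graph, and even granting it, your handling of the exceptional case $z=b$ (``dealt with separately using that $\mathrm{acc}(\Delta z)$ is closed and $\Delta$-invariant'') does not actually produce a contradiction: closedness and invariance of $\mathrm{acc}(\Delta z)$ by themselves give no reason for that set to lie in $\cls{Y}$. The paper closes this case with a much more elementary combinatorial step that your proposal lacks: assuming $z\in\partial(\Delta\oo)\sm\cls{Y}$, take a basic open set $Z\ni z$ disjoint from $\cls{Y}$ and a ray $R\subseteq Z$ representing (a translate of) $\oo$. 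For infinitely many $g_n$ the image $g_nR$ meets $Z$ (because $g_n\oo\in Z$) while its initial vertex lies outside $Z$ (by the already-established point case), so each $g_nR$ crosses the \emph{finite} set $\partial Z$; pigeonholing gives a single $w\in\partial Z$ with $g_nr_n=w$ for infinitely many $n$, and finiteness of vertex stabilisers forces the $r_n=g_n^{-1}w$ to be infinitely many distinct vertices of $R\subseteq Z$, so the orbit of $w$ accumulates inside $Z$ --- contradicting the point case again. Some argument of this kind (or a genuine proof of your convergence claim) is needed; as written, the end-orbit case of your proof is incomplete.
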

Here, we say that $\Delta$ fixes $Y$ if $\Delta Y=Y$.
\begin{proof}
For $v\in Y$ it is clear that $\partial (\Delta v) \subseteq \cls{Y}$. For any other $w\in V(X)$, pick a \pth{w}{v} $P$ in $X$ with $v\in Y$. Suppose, for a contradiction, that $x\in \partial (\Delta w) \sm \cls{Y}$, and let $O\ni x$ be a basic open set disjoint from \cls{Y}. Since vertex stabilisers are finite, $gP$ meets the finite set $\partial O$ for only finitely many $g\in \Delta$. Thus almost all of $\{gP \mid g\in \Delta\}$ lie in $O$. Since this holds for an arbitrarily small neighbourhood $O$ of $x$, we deduce $x\in \partial (\Delta v)$, and so $x\in \cls{Y}$ by our first remark. This contradiction proves that $\partial (\Delta w) \subseteq \cls{Y}$ \fe\ $w\in V(X)$. 

It remains to show that $\partial (\Delta \oo) \subseteq \cls{Y}$ \fe\ end $\oo\in \OO(X)$. For this, let $z\in \partial (\Delta \oo)$, and suppose to the contrary that $z\not\in \cls{Y}$. Let $Z\ni z$ be a basic open set with $Z\cap \cls{Y} = \emptyset$. Let $R$ be a ray of $\oo$. We may assume that $R\subseteq Z$, for otherwise we can replace $\oo$ by another element $\oo'$ in its orbit $\Delta \oo$ for which this is true. Now for every vertex $r$ of $R$, we have $\partial (\Delta r) \not\in Z$ because we proved $\partial (\Delta r) \subseteq \cls{Y}$ above. Thus we can find a sequence $\seq{g}$ of elements of $\Delta$ with $g_n r \not\in Z$ \st\ $g_n R$ meets $Z$. Since $g_n R$ is connected, we can find a sequence $\seq{r}$ of vertices of $R$ \st\ $g_n r_n\in \partial Z$. As $\partial Z$ is finite, we may even achieve $g_n r_n = w\in \partial Z$ \fe\ $n$. But then $g_n^{-1} w \in V(R) \subseteq Z$, contradicting the fact that $\partial (\Delta w) \subseteq \cls{Y}$ \fe\ $w\in V(X)$.
\end{proof}

The following is a general tool for proving that a group action on a topological space is \pd.

\begin{lemma}\label{cover pd}
Let $X$ be a metrizable topological space, $\Gamma \act X$ an action by homeomorphisms, and $\mathcal U$ an open cover of $X$ \st\ \fe\ $U\in \mathcal U$, the orbit $\Gamma U$ has no accumulation point in $X$. Then the action $\Gamma \act X$ is \pd.
\end{lemma}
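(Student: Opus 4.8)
The plan is to verify condition~\ref{pd iii} of the definition of proper discontinuity directly from the hypothesis, which by the equivalences recorded in \Sr{sec GA} is enough. The key is to read the hypothesis correctly: a point $x$ is an \emph{accumulation point} of the orbit $\Gamma U$ of a \emph{set} $U$ precisely when every open neighbourhood $V\ni x$ satisfies $gU \cap V \neq \emptyset$ for infinitely many $g\in\Gamma$. Thus ``$\Gamma U$ has no accumulation point'' means that for each $U\in\mathcal U$ and each $x\in X$ there is an open neighbourhood of $x$ meeting only finitely many of the translates $gU$.

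Once this is fixed, the argument is immediate. Given arbitrary $x,y\in X$, first use that $\mathcal U$ covers $X$ to choose $U_y\in\mathcal U$ with $y\in U_y$; this serves as the required neighbourhood of $y$. Now apply the no-accumulation hypothesis to the set $U_y$ at the point $x$: since $x$ is not an accumulation point of $\Gamma U_y$, there is an open neighbourhood $U_x\ni x$ with $\{g\in\Gamma : gU_y \cap U_x \neq \emptyset\}$ finite. Then $U_x$ meets $gU_y$ for at most finitely many $g$, which is exactly condition~\ref{pd iii} for the pair $x,y$. As $x,y$ were arbitrary, the action is \pd.

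The main obstacle here is interpretive rather than technical. One must read ``$\Gamma U$ has no accumulation point'' as a statement about the \emph{family} of translates $\{gU : g\in\Gamma\}$, counting group elements, and not as a statement about the point set $\bigcup_g gU$, which is open and would therefore accumulate at each of its own points. Metrizability is used only to license the equivalence of the three formulations of proper discontinuity invoked above; the argument itself needs nothing beyond the hypothesis and the fact that $\mathcal U$ covers $X$. I would also flag a more robust-looking variant aimed at condition~\ref{pd i}: for compact $K$, cover it by finitely many $U_1,\dots,U_n\in\mathcal U$ and note $\{g : gK\cap K\neq\emptyset\}\subseteq\bigcup_{i,j}\{g : gU_i\cap U_j\neq\emptyset\}$. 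This route, however, requires the stronger assertion that each whole $U_i$ (rather than merely a neighbourhood of one point) meets only finitely many translates, which does not follow from the stated hypothesis; so I expect the clean passage through condition~\ref{pd iii} to be the intended proof.
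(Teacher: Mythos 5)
Your proof is correct, but it takes a genuinely different route from the paper's. You verify condition \ref{pd iii} directly: given $x,y$, you take $U_y\in\mathcal U$ containing $y$ and read off the neighbourhood $U_x\ni x$ from the statement that $x$ is not an accumulation point of $\Gamma U_y$; your reading of ``accumulation point of $\Gamma U$'' (a point every neighbourhood of which meets $gU$ for infinitely many $g\in\Gamma$) is indeed the one the paper needs both here and in its applications of the lemma. The paper instead argues by contradiction via condition \ref{pd i}: if some compact $K$ satisfies $g_nK\cap K\neq\emptyset$ for an infinite sequence $(g_n)$, it covers $K$ by finitely many members of $\mathcal U$, pigeonholes to find a single $U$ with $g_nU\cap K\neq\emptyset$ for infinitely many $n$, and then uses sequential compactness of $K$ --- this is exactly where metrizability enters --- to produce an accumulation point of $\Gamma U$ inside $K$. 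Your route is more elementary and uses metrizability nowhere; the only thing it leans on is the implication \ref{pd iii} $\Rightarrow$ \ref{pd i}/\ref{pd ii}, which the paper takes from the cited equivalence but which in fact holds for arbitrary spaces by a standard double compactness argument, so nothing is lost. You are also right to flag that the naive compact-set route --- bounding $\{g: gK\cap K\neq\emptyset\}$ by $\bigcup_{i,j}\{g: gU_i\cap U_j\neq\emptyset\}$ --- would need each whole $U_i$ to meet only finitely many translates, which the hypothesis does not give; the paper's appeal to sequential compactness is precisely the device that repairs this.
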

\begin{proof}
By the definitions, $\Gamma \act X$ is \pd\ unless \ti\ a compact subspace $K$ of $X$ and an infinite sequence \seq{g} of elements of $\Gamma$ \st\ $g_n K \cap K \neq \emptyset$ \fe\ $n\in\N$. 

In this case, let $\cu_K:= \{U\in \cu \mid U\cap K \neq \emptyset\}$ be the subset of $\cu$ intersecting $K$. Since $K$ is compact, $\cu_K$ has a finite subset $\cu_{K\infty}$ covering $K$. Since $g_n K \cap K \neq \emptyset$, we have $g_n U_n \cap K \neq \emptyset$ \fe\ $n\in\N$ and some $U_n\in \cu_{K\infty}$. As $\cu_{K\infty}$ is finite, we deduce that some $U_n\in \cu_{K\infty}$ has an accumulation point in $K$ because $K$ is metrizable, hence sequentially compact. This contradicts our assumptions.
\end{proof}

Given a planar graph \g and $\Gamma\subseteq Aut(G)$, we call an embedding $\sigma: G \to \BS^2$ \defi{$\Gamma$-\cova}, if every element of $\Gamma$ maps each face-boundary of $\sigma$ to a face-boundary of $\sigma$.

The following lemma was inspired by an idea of Dunwoody \cite{DunPla}.
\begin{lemma}\label{ext 3cd}
Let \g be a connected, locally finite graph, $\Gamma \subseteq Aut(G)$, and $\sigma: G \to \BS^2$ a $\Gamma$-\cova\ embedding. Then there is a 3-connected, locally finite supergraph $H$ of $G$ endowed with an extension $\Gamma \act H$ of $\Gamma \act G$ and a $\Gamma$-\cova\ embedding $\sigma': H \to \BS^2$ extending $\sigma$. 
Moreover, the maximum order of a vertex stabiliser of   $\Gamma \act H$ coincides with that of $\Gamma \act G$. Furthermore, the identity map on $V(G)$ gives rise to a canonical homeomorphism from $\OO(G)$ to $\OO(H)$.
\end{lemma}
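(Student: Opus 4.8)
The plan is to build $H$ by thickening $G$ from the inside of each face, without ever subdividing an edge of $G$, so that $H$ is a genuine supergraph. Concretely, for each face $F$ of $\sigma$ and each \emph{corner} of $F$ (a visit of the boundary walk of $F$ to a vertex, equivalently a flag consisting of a vertex together with the two edges of $F$ incident with it in the rotation) I would add a new \emph{corner vertex} drawn just inside $F$ next to that corner. Linking consecutive corner vertices along the boundary walk of $F$ produces a cycle — a double ray, for an infinite face — call it $C_F$, running parallel to $\partial F$ inside $F$; I then join each corner vertex to the original vertex it sits next to, and add one diagonal per resulting quadrilateral so that the annular collar between $\partial F$ and $C_F$ becomes triangulated. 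Performing this simultaneously and disjointly inside every face yields the embedding $\sigma'$ extending $\sigma$, and since the whole construction is read off from the rotation system, which $\Gamma$ preserves because $\sigma$ is $\Gamma$-\cova, the action $\Gamma \act G$ extends to $\Gamma \act H$ with $\sigma'$ again $\Gamma$-\cova.

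The routine properties I would dispatch first. Local finiteness is immediate: each corner vertex has bounded degree, and each original vertex gains only finitely many incident collar edges. The stabiliser condition holds because a corner vertex corresponds to a flag, so any element of $\Gamma$ fixing it fixes the original vertex at that corner; hence its stabiliser is contained in a vertex stabiliser of $\Gamma \act G$, and since $\Gamma$ itself is unchanged the maximum order of a vertex stabiliser does not grow. For the ends, I would use that $G$ survives intact in $H$, that every corner vertex is adjacent to $G$, and that each collar double ray $C_F$ is tied at every corner to $\partial F \subseteq G$; thus any $H$-ray can be pushed into $G$, and any finite $H$-separator can be replaced by a finite $V(G)$-separator and conversely, which makes the identity on $V(G)$ induce a bijection $\OO(G) \to \OO(H)$ that is readily seen to be a homeomorphism.

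The heart of the proof, and the step I expect to cost the most work, is showing that $H$ is $3$-connected, i.e.\ that $H$ has neither a cut vertex nor a $2$-cut. The guiding observation is that $G$ stays connected inside $H$ and every added vertex is adjacent to $G$, so any separating set of size at most two must contain at least one original vertex; in particular, deleting corner vertices alone never disconnects $H$. It then remains to rule out cuts $\{x\}$ and $\{x,y\}$ involving original vertices, and here the collars do exactly the required job. If an original vertex $x$ (resp.\ a pair $\{x,y\}$) separates $G$, then this separation is witnessed along the boundary walk of some face $F$ incident with $x$ (resp.\ with both $x$ and $y$): the walk revisits $x$, or meets both $x$ and $y$, with the two sides of the separation appearing on different arcs of the walk, and the cycle $C_F$, which remains intact when $x,y$ are deleted, reconnects these arcs around the deleted vertices. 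The main obstacle is precisely the planar-combinatorial fact underlying this last sentence — that every minimal vertex cut of size at most two of a connected plane graph appears on the boundary walk of a single face in this way, the $2$-cut case reducing to the standard statement that in a $2$-connected plane graph the two vertices of a $2$-cut lie on a common face. Once this is in hand, a short case analysis, which also checks that the triangulated collars introduce no new small cut (for instance a pair consisting of one original and one corner vertex, excluded because the diagonals give every corner vertex degree at least three and leave $C_F$ connected after any single deletion), completes the verification that $H$ is $3$-connected.
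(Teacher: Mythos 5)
Your overall strategy is the same as the paper's---augment $G$ inside each face by a gadget read off from the rotation system, so that covariance, the stabiliser bound and the end space come essentially for free---but your gadget is genuinely different, and the difference sits exactly where your proof has its gap. The paper inserts \emph{two} parallel copies of each facial double ray (a triple ladder) and one parallel copy of each facial cycle (a prism); these gadgets are $3$-connected as standalone subgraphs, which is what lets the paper conclude at once that any cut of $H$ of size at most two is contained in $V(G)$, and then finish with one global argument (a shortest path $P$ in $G$ plus a path of faces alongside $P$, rerouted through the gadgets). Your single triangulated collar is \emph{not} $3$-connected on its own: if $F$ has infinite boundary, $x$ lies on $\partial F$ with boundary neighbours $a,b$ at some corner, and $c$ is the corner vertex of $x$ there, then every edge of the $F$-gadget joining the ``$a$-side'' to the ``$b$-side'' of that corner (the boundary edges $ax,xb$, the collar edges at $c$, the rung $xc$, and whichever diagonals you chose) is incident with $x$ or with $c$, so $\{x,c\}$ separates the gadget of $F$. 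Consequently you cannot deduce that small cuts of $H$ avoid corner vertices, nor that they induce separations of $G$; the connectivity must be recovered from the collars of the \emph{other} faces at $x$, and that is precisely the step you defer to an unproved ``planar-combinatorial fact''. That fact is moreover not in the form you need: the standard statement concerns $2$-cuts of $2$-connected plane graphs, whereas $G$ here is only assumed connected, and a $2$-cut of $H$ contained in $V(G)$ need not be a cut of $G$ at all (it may merely sever corner vertices), so the reduction to separations of $G$ witnessed on a single boundary walk is not established.

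The construction as described can in fact fail outright in degenerate cases, because you leave the choice of diagonals unspecified. Let $x$ be a pendant vertex of $G$ with neighbour $a$; the boundary walk of the unique face $F$ at $x$ passes $\ldots,a,x,a,\ldots$, so $x$ has a single corner vertex $c$, and the two collar quadrilaterals incident with $x$ both admit $ac$ as their diagonal. If both diagonals are chosen this way, then $N_H(x)=\{a,c\}$ and $\{a,c\}$ is a $2$-cut of $H$. This is repairable (choose the diagonals at every visited vertex to emanate from that vertex, or add a second collar layer as the paper in effect does), and the degree-$2$ vertices of $G$ then need a similar check, but as written the argument has a hole at exactly the point you yourself identify as the heart of the proof. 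The remaining parts of your proposal (local finiteness, the stabiliser bound via flags, and the identification of $\OO(G)$ with $\OO(H)$ via the fact that every new vertex is adjacent to $G$) are fine and match the paper's treatment.
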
 
\begin{proof} 	
We simultaneously construct $H$ and its embedding $\sigma'$ as follows. For every facial double-ray $R$ of $\sigma$, we embed two copies $R', R''$ of $R$ in the face $F_R$ of $\sigma$ incident with $R$, we join each vertex of $R$ to the corresponding vertex of $R'$ with an arc, and we join each vertex of $R'$ to the corresponding vertex of $R''$ with an arc. Moreover, for every facial cycle $C$ of $\sigma$ that contains more than two edges, we embed a copy $C'$ of $C$ in the face of $\sigma$ incident with $R$, and join each vertex of $C$ to the corresponding vertex of $C'$ with an arc. Easily, we can embed all those copies and arcs so that the never meet each other except at common vertices. This defines the supergraph $H$ of $G$ and its embedding $\sigma'$. Since $\sigma$ was $\Gamma$-\cova, each element of $\Gamma$ extends canonically into an automorphism of $H$, still preserving face-boundaries, and the finiteness of vertex stabilisers. It is clear by the construction that $H$ has the same space of ends as $G$.

\medskip
It remains to check that $H$ is 3-connected. For this, suppose $\{x,y\} \subset V(H)$ disconnects $H$. Note that the subgraph of $H$ spanned by a triple of double-rays $R \cup R' \cup R''$ as in the above construction is 3-connected. Similarly, the subgraph of $H$ spanned by a couple $C \cup C'$ as above is 3-connected too. It follows that $\{x,y\} \subset V(G)$.

Let $w,z$ be vertices in distinct components of $H - \{x,y\}$. By the above argument, we may assume $w,z\in V(G)$. Let $P$ be a shortest \pth{w}{z} in $G$. Let $F_1 F_2 \ldots F_k$ be a sequence of faces of $\sigma$ \st\ $\partial F_1$ contains the first edge of $P$, and $\partial F_k$ contains the last edge of $P$, and \fe\ $i<k$, $F_i$ shares an edge $e_i$ with $F_{i+1}$ where $e_i$ is incident with $P$ but not contained in $P$. In other words, $F_1 \ldots F_k$ is a \pth{F_1}{F_k} in the dual $G^*$ that does not cross $P$; it can be obtained as the sequence of faces visited by an arc `parallel' to $\sigma(P)$ lying close enough to it. We will construct a \pth{w}{z} $Q$ in $H - \{x,y\}$, contradicting our assumption that $\{x,y\}$ disconnects $w$ from $z$.

For this, note that the each of the edges $e_i$ from above has exactly one endvertex $x_i$ in $P$, because $P$ was chosen to be a geodetic path. It is now easy to construct $Q$ as a concatenation of \pths{x_i}{x_{i-1}} of $H$ contained in $F_i$ and an appropriate initial and final path in $F_1$ and $F_k$, respectively, see  \fig{figQ}.
\end{proof}

   \begin{figure}[htbp]
   \centering
 \includegraphics[width=0.5\linewidth]{figQ.eps}
 \put(-185,43){$w$}
\put(-76,55){$z$}
\put(-123,95){$x$}
\put(5,105){$y$}
\put(-123,95){$x$}
\put(-124,32){$x_1$}
\put(-131,64){$e_1$}
\put(-96,1){$x_2$}
\put(-58,3){$x_3$}
\put(-30,50){$Q$}

\caption{The path $Q$ in the proof of \Lr{ext 3cd} (dashed).} \label{figQ}
   \end{figure}

\comment{
\epsfxsize=0.55\hsize
\showFig{figQ}{The path $Q$ in the proof of \Lr{ext 3cd}.}

\begin{figure}
{\input{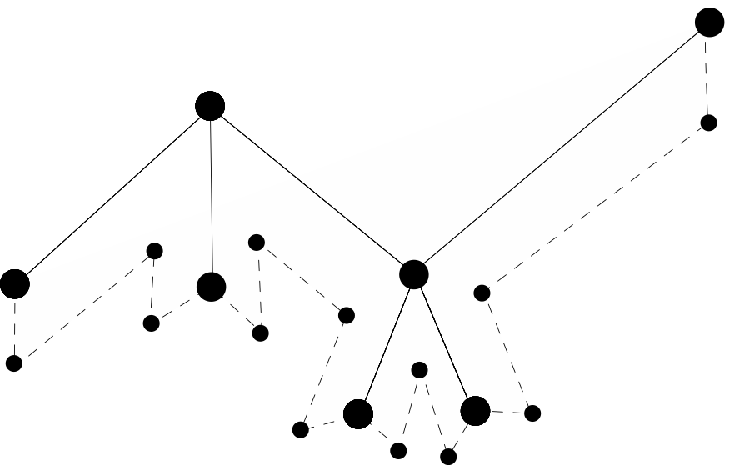}}
\put(-179,43){$w$}
\put(184.5,86){$z$}
\put(-161,51){$x_i$}
\put(-132.5,76.5){$y$}
\put(-110,71.5){$E_i$}
\put(-86,76){$y'$}
\put(-61,50.5){$x_{i'}$}
\put(-33,55.5){$z'$}
\put(-31.5,42.5){$w'$}
\caption{The path $Q$ in the proof of \Lr{ext 3cd}.}
\label{figQ}
\end{figure}
}

Using \Lr{ext 3cd} we can immediately drop the condition of 2-connectedness in \Lr{FG embeds} for \covaly\ planar Cayley graphs:
\begin{corollary}\label{FG emb k1}
Let $G$ be a locally finite Cayley graph, and $\tau: G\to \BS^2$ a \cova\ embedding. Then \ti\ an embedding $\sigma: |G| \to \BS^2$ \st\ every walk of \g is facial in $\tau$ \iff\ it is facial in $\sigma$ (in particular, the restriction of $\sigma$ to $G$ is \cova).
\end{corollary}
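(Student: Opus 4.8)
The plan is to combine \Lr{ext 3cd} with \Lr{FG embeds} and Whitney's uniqueness theorem (\Tr{imrcb}). Write $G=Cay(\Gamma,S)$, so that $\Gamma$ acts canonically on $G$ and, $G$ being a \Cg, the \cova\ embedding $\tau$ is $\Gamma$-\cova. First I would apply \Lr{ext 3cd} to $\Gamma\subseteq Aut(G)$ and $\tau$: this yields a $3$-connected, locally finite supergraph $H\supseteq G$, a $\Gamma$-\cova\ embedding $\sigma':H\to\BS^2$ extending $\tau$, and a canonical homeomorphism $\OO(G)\to\OO(H)$ induced by the identity on $V(G)$. Together with the inclusion $G\subseteq H$, this homeomorphism lets me regard $|G|$ as a subspace of $|H|$: a sequence of vertices converges to an end of $G$ exactly when it converges to the corresponding end of $H$, so the natural injection $|G|\hookrightarrow|H|$ is continuous, and being a continuous injection from a compact space into a Hausdorff space it is a topological embedding.

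Next, since $H$ is $3$-connected, locally finite and planar, \Lr{FG embeds} provides an embedding $\hat\sigma:|H|\to\BS^2$, and I set $\sigma:=\hat\sigma\restr|G|$, which is then an embedding of $|G|$ into $\BS^2$ whose restriction to $G$ is an embedding of the graph $G$. It remains to show that a walk $W$ of $G$ is facial in $\tau$ \iff\ it is facial in $\sigma$. The engine is Whitney uniqueness: the two embeddings $\sigma'$ and $\hat\sigma\restr H$ of the $3$-connected graph $H$ have, by \Tr{imrcb}, exactly the same facial walks, so each edge bounds the same combinatorial pair of faces in both (one could also route this through the unique planarity of $|H|$ given by \Lr{TR hom S gen}). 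Now $\sigma'$ was built from $\tau$ by inserting, \emph{strictly inside} each face $F$ of $\tau$, a ``ladder'' on the edges of $E(H)\setminus E(G)$ subdividing $F$ and attached along $\partial F$; deleting these edges from $\sigma'$ returns $\tau$ with its faces intact. Because $\hat\sigma\restr H$ has the same facial structure as $\sigma'$, deleting the same edges $E(H)\setminus E(G)$ from $\hat\sigma$ merges its faces along exactly the same pattern.

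Hence the faces of $\sigma=\hat\sigma\restr|G|$ are in boundary-preserving bijection with the faces of $\tau$: each face $F$ of $\tau$ corresponds to the face of $\sigma$ obtained by amalgamating all the ladder faces lying inside $F$, whose boundary in $G$ is again $\partial F$ (a cycle, or, after compactification, the boundary circle consisting of a facial double ray of $\tau$ together with the end(s) it converges to). Consequently $W\subseteq\partial F$ for a $\tau$-face $F$ \iff\ $W$ lies in the boundary of the corresponding $\sigma$-face, which is the claimed equivalence; the parenthetical statement that $\sigma\restr G$ is \cova\ then follows at once, since $\sigma$-facial walks coincide with $\tau$-facial walks and $\tau$ is \cova.

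The main obstacle I anticipate is the bookkeeping in the amalgamation step: I must verify that deleting $E(H)\setminus E(G)$ genuinely merges the subdividing ladder faces back into a single face carrying the original boundary, with no two distinct $\tau$-faces getting identified and no spurious boundary edges appearing. This uses that each ladder is confined to one face of $\tau$ and attached along $\partial F$ (so its deletion is a local operation), the description of faces of $3$-connected planar graphs as bounded by cycles via \Lr{scc}, and the correspondence between faces of the graph embedding $\hat\sigma\restr H$ and faces of the compactification embedding $\hat\sigma$ of $|H|$. Matching an infinite facial double ray of $\tau$ with the boundary circle of the corresponding $\sigma$-face is the only genuinely delicate point; everything else is routine.
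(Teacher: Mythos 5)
Your proof is correct and follows essentially the same route as the paper: apply \Lr{ext 3cd} to pass to a $3$-connected supergraph $H$, embed $|H|$ via \Lr{FG embeds}, and restrict to $|G|$ using the canonical homeomorphism $\OO(G)\to\OO(H)$. The paper's own proof is just these three steps stated without elaboration; your additional justification of the facial-walk correspondence via Whitney's \Tr{imrcb} fills in detail the paper leaves implicit, but it is the same argument.
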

\begin{proof}
We use  \Lr{ext 3cd} to embed \g into a 3-connected supergraph $H$. We then apply \Lr{FG embeds} to $H$, and restrict the resulting embedding from $|H|$ to its subspace $|G|$. Here, we used the fact that $\OO(H)$ is canonically homeomorphic to $\OO(G)$.
\end{proof}

It should be possible to extend this even further to every locally finite planar graph using the construction in the proof of \Lr{ext 3cd}, i.e.\ to just drop the 2-connectedness condition in \Lr{FG embeds}, but we will not need this. I suspect that  \Lr{FG embeds}  generalises even further as follows:
\begin{problem}
Let \g be a planar graph, and $\ell: E(G) \to \R_+$ an assignment of lengths to its edges. Let $\hat{G}$ denote the metric completion of the metric space defined by $\ell$ (see \cite{ltop} for the precise definition). Is $\hat{G}$ always homeomorphic to a subspace of $\BS^2$?
\end{problem}
 
Likewise, we can use \Lr{ext 3cd} to relax the 3-connectedness condition in \Lr{TR hom S}:
\begin{corollary} \label{cor hom S}	
Let $G$ be a connected, \lf\ graph, and $\Gamma \subseteq Aut(G)$. Let $\sigma: |G| \to \BS^2$ be a $\Gamma$-\cova\ embedding. Then \fe\ $g\in \Gamma$, there is a homeomorphism $h_g: \BS^2\to \BS^2$ \st\ $h_g \circ \sigma = g$. 

Even more, given any face $F$ of $\sigma$, and any homeomorphism $f: \cls{F} \to \cls{h_g(F)}$, we may assume that $h_g$ coincides with $f$ on $\cls{F}$.
\end{corollary}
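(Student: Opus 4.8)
The plan is to mirror the proof of \Cr{FG emb k1}: first enlarge $G$ to a $3$-connected graph where \Lr{TR hom S} applies, then transfer the conclusion back to $G$ by restriction. Applying \Lr{ext 3cd} to the restriction of $\sigma$ to $G$, I obtain a $3$-connected, \lf\ supergraph $H\supseteq G$ carrying an extension $\Gamma\act H$ of $\Gamma\act G$ together with a $\Gamma$-\cova\ embedding of $H$. Since $H$ is $2$-connected, \Lr{FG embeds} extends this to an embedding $\sigma'\colon|H|\to\BS^2$, and by the uniqueness part (\Tr{imrcb}, \Lr{TR hom S gen}) I may post-compose with a homeomorphism of $\BS^2$ so that $\sigma'$ restricts on $H$ to the embedding produced by \Lr{ext 3cd}. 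Because $\OO(H)$ is canonically homeomorphic to $\OO(G)$, I regard $|G|$ as a subspace of $|H|$; as $G$ is dense in $|G|$ and $\BS^2$ is Hausdorff, the continuous map $\sigma'|_{|G|}$ is the unique continuous extension of $\sigma|_G$, hence equals $\sigma$.

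For the first assertion I would fix $g\in\Gamma\subseteq Aut(H)$. \Lr{TR hom S} supplies a homeomorphism $h_g\colon\BS^2\to\BS^2$ with $h_g\circ\sigma'=\sigma'\circ g$ on $|H|$. Since $g$ (as an automorphism of $H$ extending one of $G$) preserves the subspace $|G|$, restricting this identity to $|G|$ and using $\sigma'|_{|G|}=\sigma$ yields $h_g\circ\sigma=\sigma\circ g$, i.e.\ $h_g$ realizes $g$, as required.

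The hard part will be the refinement involving a prescribed $f$, because $F$ is a face of $\sigma$ (of the not-necessarily-$3$-connected graph $G$), whereas \Lr{TR hom S} only lets one prescribe the homeomorphism on the closure of a single face of the $3$-connected graph $H$; in general the extra edges of $H$ cut $F$ into several faces of $\sigma'$, so no single application can pin $h_g$ down on all of $\cls F$. Rather than prescribe through $H$, I would correct the $h_g$ just obtained. Note first that (as already implicit in \Lr{TR hom S}) $f$ must restrict on $\partial F$ to the boundary map induced by $g$, since otherwise no $h_g$ with $h_g\circ\sigma=\sigma\circ g$ could agree with it there; granting this, $f$ and $\phi:=h_g|_{\cls F}$ agree on $\partial F$, both being $\sigma\circ g\circ\sigma^{-1}$ there. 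Hence $f\circ\phi^{-1}\colon\cls{g(F)}\to\cls{g(F)}$ is the identity on $\partial g(F)$ (here $g(F)=h_g(F)$ since $\sigma$ is \cova), and extending it by the identity on $\BS^2\sm g(F)$ yields, by the pasting lemma, a continuous bijection $\psi$ of $\BS^2$; as $\BS^2$ is compact and Hausdorff, $\psi$ is a homeomorphism.

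Finally I would replace $h_g$ by $\psi\circ h_g$. On $\cls F$ this equals $\psi\circ\phi=f$, delivering the prescribed values. It still realizes $g$: the open face $g(F)$ is disjoint from $\sigma(|G|)$, and on $\partial g(F)$, the only part of $\sigma(|G|)$ meeting $\cls{g(F)}$, we have $\psi=\mathrm{id}$, so $\psi$ fixes $\sigma(|G|)$ pointwise; therefore $\psi\circ h_g\circ\sigma=\psi\circ\sigma\circ g=\sigma\circ g$. The main obstacle, and the only genuinely new point beyond \Cr{FG emb k1}, is precisely this surgery confirming that prescribing on a face of $\sigma$ (rather than of $\sigma'$) remains compatible with realizing $g$; everything else is a routine transfer through \Lr{ext 3cd}.
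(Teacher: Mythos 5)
Your proof of the first assertion is the same as the paper's: pass to the $3$-connected supergraph $H$ of \Lr{ext 3cd}, view its embedding as an embedding of $|H|$, apply \Lr{TR hom S}, and restrict to $|G|$. (The paper simply asserts that $\sigma'$ may be taken to be an embedding of $|H|$ restricting to $\sigma$; your density-plus-Hausdorff argument is a reasonable way to make that identification precise, and you are at least as careful here as the paper is.) For the second assertion you take a genuinely different route. The paper stays inside the Thomassen--Richter machinery: it arranges, by constructing $\sigma'$ in \Lr{ext 3cd} compatibly with the given $f$, that $f$ maps $F\cap\sigma'(H)$ homeomorphically onto $h_g(F)\cap\sigma'(H)$, and then invokes the second statement of \Lr{TR hom S} once for each face $F_i$ of $\sigma'$ contained in $F$, with $f$ restricted to $\cls{F_i}$ as the prescribed map. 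You instead take an arbitrary $h_g$ realizing $g$ and post-compose with the correction $\psi$ equal to $f\circ (h_g|_{\cls{F}})^{-1}$ on $\cls{h_g(F)}$ and the identity elsewhere; since $f$ and $h_g$ both induce $\sigma\circ g\circ\sigma^{-1}$ on $\partial F$ (the compatibility that is implicitly required for the statement to be true at all, and which the paper also assumes tacitly), $\psi$ is the identity on $\partial h_g(F)$, hence continuous by pasting and a homeomorphism by compactness, and it fixes $\sigma(|G|)$ pointwise because the face $h_g(F)$ is disjoint from $\sigma(|G|)$; so $\psi\circ h_g$ still realizes $g$ and agrees with $f$ on $\cls{F}$. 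Your surgery buys some economy: the refinement needs only the first statement of \Lr{TR hom S}, and it sidesteps both the mild circularity in the paper's argument (tuning $\sigma'$ to an $f$ whose target $\cls{h_g(F)}$ is only determined once $\sigma'$ and $h_g$ are fixed) and the implicit simultaneous, multi-face strengthening of \Lr{TR hom S}'s second statement. The paper's version produces the prescribed $h_g$ in a single pass through the $3$-connected machinery; note that both arguments scale to prescribing $h_g$ on all faces at once (your corrections have pairwise disjoint supports), which is the form in which the corollary is actually used in the proof of \Lr{pd ext}.
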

\begin{proof}
Let $H$ be the 3-connected supergraph of \g and $\sigma': H \to \BS^2$ its\\ $\Gamma$-\cova\ embedding extending $\sigma(G)$, as provided by \Lr{ext 3cd}. As $\OO(H)$ is canonically homeomorphic to $\OO(G)$, we can in fact consider $\sigma'$ to be an embedding of $|H|$. Applying \Lr{TR hom S} to $H$ we obtain a homeomorphism $h_g: \BS^2\to \BS^2$ \st\ $h_g \circ \sigma' = g$. Hence $h_g \circ \sigma = g$ as $\sigma'$ extends $\sigma$.

For the second statement, given any face $F$ of $\sigma$, and a  homeomorphism $f: \cls{F} \to \cls{h_g(F)}$, we may assume that $f$ maps $F\cap \sigma'(H)$ homeomorphically onto  $h_g(F)\cap \sigma'(H)$ by constructing $\sigma'$ appropriately in the proof of \Lr{ext 3cd}. Applying the second statement of \Lr{TR hom S} to each face $F_i$ of $\sigma'$ with $F_i\subseteq F$ and the restriction of $f$ to $F_i$, we easily achieve that $h_g$ coincides with $f$ on $\cls{F}$.
\end{proof}

Let \g be a \Cg\ of a group $\Gamma$ with an embedding  $\sigma: G \to \BS^2$. We say that  $\sigma$ is \defi{orientation-preserving}, if the clockwise cyclic ordering in which the labels of the edges of a vertex $x$ of \g appear in $\sigma$ is independent of the choice of $x$. By \defi{labels} here we mean the corresponding element of the generating set of $\Gamma$ used to define $G$. (If \g is 3-connected, then Whitney's \Tr{imrcb} implies that this cyclic ordering is the same for each $x\in V(G)$ up to orientation. See e.g.\ \cite{cayley3} for more.)

\begin{lemma}\label{pd ext}
Let \g be a connected, locally finite graph, and suppose $\Gamma\subseteq Aut(G)$ has finite vertex stabilisers. Let $\sigma: |G| \to \BS^2$ be a $\Gamma$-\cova\ embedding. 
Then the action of $\Gamma$ on \g extends into  a (faithful) \pd\ action of $\Gamma$ on $\BS^2 \sm \sigma(\OO(G))$ by homeomorphisms. 

Moreover, if $\Gamma$ has finitely many orbits of vertices, then the latter action can be chosen to be co-compact.

Furthermore, if \g is a Cayley multi-graph of $\Gamma$, and $\sigma$ is orientation-preserving, and all faces of $\sigma$ have finite boundary, then the latter action can be chosen so that, in addition, it stabilises at most one point of each face of $\sigma$.
\end{lemma}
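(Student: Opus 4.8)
The plan is to build the extended action one orbit of faces at a time, using the functoriality of radial coning (the Alexander trick) to guarantee that the resulting assignment $g\mapsto h_g$ is a genuine homomorphism, rather than merely a choice of extensions of the individual elements. As a first step I would apply \Lr{ext 3cd} to embed $G$ into a $3$-connected, locally finite supergraph $H$ carrying a $\Gamma$-covariant embedding $\sigma'$ extending $\sigma$, with $\OO(H)$ canonically homeomorphic to $\OO(G)$ and vertex-stabiliser orders unchanged; since $\sigma'(\OO(H))=\sigma(\OO(G))$, any action on $\BS^2\sm\sigma'(\OO(H))$ extending $\Gamma\act H$ restricts to one extending $\Gamma\act G$, so it suffices to treat $H$. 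By \Lr{scc} applied to the compact, $2$-connected, locally connected set $\sigma'(|H|)$, every face $F$ of $\sigma'$ is bounded by a simple closed curve, so $\cls F$ is a closed disc whose boundary circle meets $\sigma(\OO(G))$ only in the (permuted) end-images lying on $\partial F$.

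Next I would set up the bookkeeping: choose a transversal $\mathcal R$ of the $\Gamma$-action on faces, and for $F\in\mathcal R$ let $\Gamma_F=\mathrm{Stab}_\Gamma(F)$, acting on the circle $\partial F$ through the homeomorphisms induced by the graph action. Fixing a homeomorphism $\cls F\cong D^2$, radial coning sends a boundary homeomorphism $f$ to $\hat f(r,\theta)=(r,f(\theta))$; since $\widehat{f\circ f'}=\hat f\circ\hat{f'}$, this defines a \emph{homomorphism} $\rho_F\colon\Gamma_F\to\mathrm{Homeo}(\cls F)$ extending the boundary action. For each face $F'$ in the orbit of $F$ I would fix $t_{F'}\in\Gamma$ with $t_{F'}F=F'$ (and $t_F=e$) together with a homeomorphism $\tau_{F'}\colon\cls F\to\cls{F'}$ extending $t_{F'}|_{\partial F}$, which exists by the second part of \Cr{cor hom S}. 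Then for $g\in\Gamma$ with $F''=gF'$ I would declare $h_g|_{\cls{F'}}:=\tau_{F''}\circ\rho_F\big(t_{F''}^{-1}gt_{F'}\big)\circ\tau_{F'}^{-1}$; the inner element lies in $\Gamma_F$, the map extends $g|_{\partial F'}$, and a routine cocycle check gives $h_{g_1g_2}=h_{g_1}h_{g_2}$. Gluing these face maps to the graph action $g$ on $\sigma'(H)$ produces a homeomorphism of $\BS^2\sm\sigma(\OO(G))$ via the pasting lemma, whose applicability rests on the face-closure cover being locally finite away from the deleted end-images (each vertex meets finitely many faces).

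Faithfulness is inherited from the faithful action on the graph. For proper discontinuity I would invoke \Lr{cover pd}: it suffices to exhibit an open cover each member $U$ of which has $\Gamma U$ free of accumulation points in $\BS^2\sm\sigma(\OO(G))$. Small balls around points of $\sigma'(H)$ and small balls inside faces do the job, the point being that every $\Gamma$-orbit accumulates only on $\sigma(\OO(G))$; this is essentially \Lr{acc clos} applied with $Y=V(H)$, transported through the homeomorphism $\sigma'\colon|H|\to\sigma'(|H|)$ and the fact that $\sigma'(|H|)$ is closed in $\BS^2$, so that any compact region avoiding the end-images meets only finitely many translates of a given cell. When $\Gamma$ has finitely many vertex orbits I would form a compact ``star region''---a vertex with the half-edges at it and the coned corners of its incident faces---and note that finitely many $\Gamma$-translates of finitely many such regions cover $X$, giving a compact $K$ with $\Gamma K=\BS^2\sm\sigma(\OO(G))$, hence co-compactness.

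For the final assertion, assume $G$ is a Cayley multi-graph (so $\Gamma$ acts freely on vertices), $\sigma$ is orientation-preserving, and every face has finite boundary; then each $\partial F$ is a finite cycle and $\cls F$ is already a closed disc, so no passage to $H$ is needed. Freeness forbids any nontrivial $g\in\Gamma_F$ from fixing a boundary vertex, and the orientation-preserving hypothesis forces $\Gamma_F$ to preserve the cyclic orientation of $\partial F$, i.e.\ to act by shifting rather than reflecting; hence $g|_{\partial F}$ is fixed-point-free on the whole circle. Consequently the coned map $\rho_F(g)$ fixes only the cone point $c_F$ of $\cls F$, so $c_F$ is the unique point of $F$ with nontrivial stabiliser. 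Spreading this over the orbit via the $\tau_{F'}$ places a distinguished point $c_{F'}:=\tau_{F'}(c_F)$ in each face with $h_g(c_{F'})=c_{gF'}$, so the action stabilises at most the single point $c_{F'}$ in each face $F'$. The hard part throughout is not extending individual homeomorphisms---\Cr{cor hom S} already supplies those---but securing the homomorphism property coherently across all faces; the functoriality of radial coning is exactly what removes this obstacle, while the deletion of $\sigma(\OO(G))$ is precisely what makes \Lr{cover pd} applicable.
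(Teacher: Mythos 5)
Your construction of the homomorphism $g\mapsto h_g$ via radial coning and the induced-representation formula $h_g|_{\cls{F'}}=\tau_{F''}\circ\rho_F(t_{F''}^{-1}gt_{F'})\circ\tau_{F'}^{-1}$ is correct and is a cleaner packaging of what the paper does with its fixed maps $f_\eta,f_\zeta$; likewise your treatment of co-compactness and of the final statement (cone points as the unique possibly-stabilised points, in the finite-boundary, orientation-preserving, free case) matches the paper's argument in substance. But there is a genuine gap in the proof of proper discontinuity, and it is exactly the case to which the paper devotes most of its proof: faces with \emph{infinite} boundary.

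If $\partial F$ is infinite (contains a facial double ray), the stabiliser $\Gamma_F$ can be infinite --- take $\Gamma=\Z$ acting on its standard Cayley graph, a double ray: each of the two faces is stabilised by all of $\Z$. Radial coning then makes every element of $\rho_F(\Gamma_F)$ fix the cone point $c_F$, which lies in the interior of $F$ and hence in $X=\BS^2\sm\sigma(\OO(G))$; the singleton $\{c_F\}$ is a compact set with infinite stabiliser, so the action you build is not properly discontinuous. More generally, for an interior point $(r,\theta)$ with $0<r<1$ the coned orbit $\{(r,g\theta)\}$ stays on the circle of radius $r$ and accumulates at $(r,\theta_0)$ for any accumulation point $\theta_0$ of the boundary orbit --- an interior point of $F$, not a point of $\sigma(\OO(G))$. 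So your claim that ``every $\Gamma$-orbit accumulates only on $\sigma(\OO(G))$'' fails for the extended action, and ``small balls inside faces'' do not satisfy the hypothesis of \Lr{cover pd}. (\Lr{acc clos} only controls orbits inside $|G|$ itself, not orbits of points you add in the interiors of faces; the coning destroys the conclusion there.) The paper circumvents this by \emph{not} coning: it reflects a ``shadow'' copy of the whole graph into each infinite-boundary face via Jordan--Schoenflies, so that translates of compact sets in a face are pushed out towards the end-images on $\partial F$ (this is where \Lr{acc clos}, \Lr{face sep} and the extra cover sets around the new ends $\OO(G')\sm\OO(G)$ are needed). Your argument is complete only under the additional hypothesis that all faces of $\sigma'$ have finite boundary, which is the paper's ``warm-up'' special case.
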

\begin{proof}
Given $\Gamma\subseteq Aut(G)$, we want to extend each $g\in \Gamma$ into a homeomorphism $h_g: \BS^2 \to \BS^2$. We obtain $h_g$ by applying \Cr{cor hom S}. In order for this map $g \mapsto h_g$ to extend the action $\Gamma \act G$ into an action $\Gamma \act \BS^2$, we need it to be a homomorphism from $\Gamma$ to $Aut(\BS^2)$. To achieve this, we will exploit the second statement of \Cr{cor hom S} appropriately. 

For this, given any map $\eta: \partial F \to \partial F$ which is the restriction of one or more $g\in \Gamma$ to the boundary $\partial F$ of some face $F$ of $\sigma$, we fix a homeomorphism $f_\eta: \cls{F} \to \cls{F}$ \st\ $f_\eta$ coincides with $\eta$ on $\partial F$. The existence of $f_\eta$ is a consequence of \Cr{cor hom S}, which unsurprisingly makes use of the Jordan-Sch\"onflies theorem. 

Similarly, if for two faces $F,F'$ of $\sigma$ there is $g\in \Gamma$ mapping $ \partial F' $ to $\partial F$, then we fix a map $\zeta=\zeta_{F',F}: \partial F' \to \partial F$ which is the restriction of such a $g$ to $\partial F'$, and a homeomorphism $f_\zeta: \cls{F}' \to \cls{F}$ \st\ $f_\zeta$ coincides with $\zeta$ on $\partial F'$.

By compositions of these $ f_\eta$'s and $f_\zeta$'s we obtain, for any $g\in \Gamma$ and any face $F$,  a unique map $f_F$ extending the restriction of $g$ to $\partial F$ to  $\cls{F}$.
When we apply \Cr{cor hom S} to define the map $g \mapsto h_g$ as above, we can, by its second statement, assume that the restriction of $h_g$ to each face $F$ coincides with this $f_F$. 

This immediately implies that $h_{gg'}= h_g h_{g'}$, hence our map $\phi: \Gamma \to Aut(\BS^2)$ defined by $g \mapsto h_g$ is an injective homomorphism. By identifying $\Gamma$ with its image in $Aut(\BS^2)$ under $\phi$ we obtain the action $\Gamma \act \BS^2$.

Let $X:= \BS^2 \sm \sigma(\OO(G))$. Since $\Gamma(\OO(G)) = \OO(G)$, we can define an action $\Gamma \act X$ by homeomorphisms just by restricting the above action from $\BS^2$ to $X$. 

\medskip
In the special case where all faces of $\sigma$ have finite boundary, 
this action turns out to be \pd\ and, if $V(G)$ has finitely many $\Gamma$-orbits, co-compact. We will first handle this special case as a warm-up towards the more involved general case.\footnote{In fact this special case can be handled by noting that $X$ is a CW-complex and $\Gamma\act X$ a cellular action, and using the equivalence between (2) and (10) in \cite[Theorem~9]{KapPD}. But we provide the following proof in order to ease the understanding of the proof of the general case.}

So let us first assume that all faces of $\sigma$ have finite boundary. 
To show that $\Gamma \act X$ is \pd\ we apply \Lr{cover pd} with the following choice of the cover \cu\ of $X$:
\begin{enumerate}
\item \label{cui} \fe\ $x\in \sigma(V(G))$, we choose an open neighbourhood $U_x$ of $x$ in $\BS^2$ meeting only edges of \g and faces of $\sigma$ incident with $x$, and containing the image of no other vertex of \G;
\item \label{cuii} \fe\ point $x\in \sigma(e)$ with $e\in E(G)$, we choose an open neighbourhood $U_x$ of $x$ contained in the union of $\sigma(e)$ with the faces of $\sigma$ incident with $e$;
\item \label{cuiii} \fe\ $x$ in a face $F$ of $\sigma$, we choose $U_x=F$;
\end{enumerate}
Let $\cu:= \{U_x \mid x\in X\}$ be the resulting cover of $X$. We claim that \cu\ satisfies the requirement of \Lr{cover pd} that $\Gamma U$ has no accumulation point in $X$ \fe\ $U\in \mathcal U$. To see this for $U=U_x$ of type \ref{cuiii}, note that $\Gamma$ maps every face of $\sigma$ to a face of $\sigma$, and that $U_x=F$ has a finite stabiliser in $\Gamma$ because it is determined by the finite set of vertices in $\partial F$ and we are assuming that every vertex has a finite stabiliser in $\Gamma$. For $U_x$ of type \ref{cui} or \ref{cuii} we remark that $U_x$ is contained in the closure of the union of finitely many $U$'s of type \ref{cuiii}, and repeat the same argument. 

To show that $\Gamma \act X$ is co-compact when $V(G)$ has finitely many orbits, pick $D\subset V(G)$ containing exactly one vertex from each orbit, and let $K$ be the union of the closures of the faces incident with vertices in $D$. Then $K$ is compact, and it is easy to see that $\bigcup \Gamma K= X$, which means that $\Gamma \act X$ is co-compact.

To prove the final statement, we modify $G$ into a triangulation $G'$ by adding a new vertex $v_F$ inside each face $F$ of $\sigma$, and joining $v_F$ to each vertex incident with $F$ with a new edge, which edge we draw as an arc inside $F$ to obtain an embedding $\sigma'$ of $G'$. 
Since $\sigma$ is $\Gamma$-\cova, $\Gamma \act G$ extends to an action on $G'$. Since $\Gamma$  stabilises no vertex of $G$, and it preserves the orientation of $\BS^2$, the only vertices of $G'$ it stabilises are the $v_F$'s. Repeating the above construction with $G'$ instead of $G$ yields the desired action, where the only points of $\BS^2 \sm \sigma(\OO(G))$ fixed are the images of the $v_F$'s. 

\medskip
We now consider the general case, where some faces of $\sigma$ may have infinite boundary. In this case the above argument fails because the stabiliser of such a face may be infinite. To circumvent this difficulty, we will subdivide such faces by embedding appropriate graphs inside them, thus embedding \g into a plane supergraph $G'$, in such a way that the faces of $G'$ have finite stabilisers in $\Gamma \act G'$ even if they have \defi{infinite boundary}\footnote{We say that a face $F$ has \defi{finite boundary}, if $\partial F$ contains finitely many edges. Otherwise we say that $F$ has \defi{infinite boundary}.}.

To simplify the exposition, we assume below that \g is 2-connected (in order to be able to apply \Lrs{scc} and \ref{face sep}). If not, then we apply the following arguments to the 3-connected supergraph $H$ of \g provided by \Lr{ext 3cd} to obtain the desired action for $H$, hence also for $G$ (the final sentence has already been proved in the above special case of finite face boundaries).

Our auxiliary supergraph $G'$ of $G$ is defined as follows. For every face $F$ of $\sigma$ (with infinite boundary), we `reflect' our embedding $\sigma: |G| \to \BS^2$ from the complement of $F$ into $F$; that is, we choose a homeomorphism $r: (\BS^2 \sm F) \to \cls{F}$ that coinsides with the identity on $\cls{F}$, which exists by the Jordan-Schoenflies theorem \cite{ThoJS} and the fact that $\partial F$ is a simple closed curve (\Lr{scc}). 
Then $r \circ \sigma$ embeds a copy of \g into $\cls{F}$. We call this copy of \g the \defi{shadow} of \g in $F$. 
We define $G'$ to be the plane graph obtained as the union of \g with all those shadows, one for each face of $\sigma$, and let $\sigma': G' \to \BS^2$ denote the corresponding embedding. In fact, we can extend  $\sigma'$ into an embedding of $|G'|$ in $\BS^2$.

It is not hard to see that the action $\Gamma \act G$ extends to an action $\Gamma \act G'$: as each $g\in \Gamma$ maps each face $F$ of $\sigma$ to a face $gF$ of $\sigma$ via the map $g \mapsto h_g$, we can let $g$ map the shadow of \g in $F$ to the shadow of \g in $gF$ via the unique automorphism of \g determined by the restriction of $g$ to $\partial F$. This defines the action $\Gamma \act G'$, and therefore an action $\Gamma \act |G'|$. Easily, $\sigma'$ is $\Gamma$-\cova\ too.

We claim that 
\labtequ{orbits}{every orbit of $\Gamma \act |G'|$ has all its accumulation points in $\OO(G) \subset \OO(G')$.}
Indeed, this follows from \Lr{acc clos}, applied with $X=G'$ and $Y=V(G)$.


Our next claim is that $G'$ is still 2-connected. Indeed, after removing any vertex $x\in V(G')$, the graph $G$ as well as any of its closed shadows is still connected since $G$ was assumed to be 2-connected. Moreover, any shadow $S$ is still connected to \g as $G$ and $S$ have several vertices in common by construction, and our claim easily follows.

Since 
$\sigma'$ is $\Gamma$-\cova, we can apply \Cr{cor hom S} to it. As we did in the special case where all faces of \g have finite boundary above, we exploit the second statement of \Cr{cor hom S}  (the maps $f_\eta$ now correspond to faces of $\sigma'$) so as to extend $\Gamma \act |G'|$ into an action $\Gamma \act \BS^2$ and, letting $X:= \BS^2 \sm \sigma(\OO(G))$ (and not $\BS^2 \sm \sigma(\OO(G'))$), into an action $\Gamma \act X$ by restriction.

As above, we use \Lr{cover pd} to show that $\Gamma \act X$ is \pd. The cover \cv\ of $X$ we use is similar to \cu\ from above; the only difference is that now $X$ contains points that are images of ends $\oo\in \OO(G')\sm \OO(G)$, for which we need to choose open sets in our cover. For every such end $\oo$, we choose a cycle $D_x$ in $G'$ separating $\oo$ from $\partial F$ where $F$ is the face of $\sigma$ containing $\sigma'(\oo)$, which cycle exists by the second part of \Lr{face sep}, and will be used in \ref{cviv} below. Our cover $\cv$ of $X$ is defined as follows:
\begin{enumerate}
\item \label{cvi} \fe\ $x\in \sigma'(V(G'))$, we choose an open neighbourhood $V_x$ of $x$ in $\BS^2$ meeting only edges of \g and faces of $\sigma'$ incident with $x$, and containing the image of no other vertex of $G'$;
\item \label{cvii} \fe\ point $x\in \sigma'(e)$ with $e\in E(G')$, we choose an open neighbourhood $V_x$ of $x$ contained in the union of $\sigma'(e)$ with the faces of $\sigma'$ incident with $e$;
\item \label{cviii} \fe\ $x$ in a face $F$ of $\sigma'$, we choose $V_x=F$;
\item \label{cviv} \fe\ $x\in\sigma'(\OO(G')\sm \OO(G))$, we let $V_x$ be the component of $\BS^2 \sm D_x$ containing $x$, where $D_x$ is the cycle defined above.
\end{enumerate}
Let $\cv:= \{V_x \mid x\in X\}$ be the resulting cover of $X$. 

Again, we claim that $\Gamma U$ has no accumulation point in $X$ \fe\ $U\in \mathcal U$. To prove this for $U=V_x$ of type \ref{cviii}, we recall that $\Gamma$ maps every face of $\sigma'$ to a face of $\sigma'$, and show that $V_x=F$ has a finite stabiliser in $\Gamma$. If $F$ coincides with an original face of $\sigma$ with finite boundary, then we showed this above. Otherwise, $F$ lies inside a face $H$ of $\sigma$ in which $\sigma'$ embedds a shadow of \G. In this case, applying \Lr{face sep} to that shadow yields a cycle $C_F$ separating $F$ from $\partial H$. By \eqref{orbits}, the orbit $\Gamma C_F$ of $C_F$ has all its accumulation points in $\OO(G)$ in the topology of $|G'|$. Since $\sigma'(|G'|)$ is a homeomorph of $|G'|$, this means that the orbit of the circle $\sigma'(C_F)$ under our action $\Gamma \act \BS^2$ has all its accumulation points in $\sigma'(\OO(G))$. Since $F$ is contained in one of the sides of $C_F$, this easily implies that the orbit of $V_x=F$ also has all its accumulation points in $\sigma'(\OO(G))$. Thus $V_x$ has no accumulation point in $X= \BS^2 \sm \sigma'(\OO(G))$ as desired.

For $U=V_x$ of type \ref{cviv} we use the same argument, replacing the cycle $C_F$ by $D_x$.

For $V_x$ of type \ref{cvi} or \ref{cvii} we argue as earlier: we remark that $V_x$ is contained in the closure of the union of finitely many $V$'s of type \ref{cviii}, and repeat the same argument. 

Thus  \cv\ satisfies the requirement of \Lr{cover pd}, and we deduce that $\Gamma \act X$ is \pd.

To show that $\Gamma \act X$ is co-compact when $V(G)$ has finitely many orbits, we argue as above, except that we now employ \Lr{face sep} as follows. Pick $D\subset V(G)$ containing exactly one vertex from each orbit. For every face $F$ of $G'$ incident with a vertex of $D$, let $C_F$ be a cycle of $G'$ separating $F$ from the complement of the  face $F'$ of $G$ containing $F$, which exists by \Lr{face sep} applied to the shadow of $G$ in $F'$. (If $F$ has finite boundary, we can just let $C_F=\partial F$.) Let $K$ be the union of the closures of the interiors of all these cycles $C_F$. Then $K$ is compact, and again we have $\bigcup \Gamma K= X$, which means that $\Gamma \act X$ is co-compact.

\end{proof}


\section{From actions on 2-manifolds to planar \Cg s} \label{sec back}

In this section we prove the implication \ref{T i} $\to$ \ref{T ii} of \Tr{tfae} (\Lr{lem vapf}). \smallskip

We say that a space $X \subseteq \BS^2$ is \defi{uniquely planar}, if \fe\ two embeddings $\sigma, \sigma': |X| \to \BS^2$ there is a homeomorphism $h: \BS^2\to \BS^2$ \st\ $h \circ \sigma = \sigma'$, and there is at least one such embedding $\sigma$. In other words, every automorphism of $|X|$ extends into a homeomorphism of $\BS^2$.

\begin{lemma}\label{Freu X}
Every connected 2-manifold $X \subseteq \BS^2$ is {uniquely planar}.
\end{lemma}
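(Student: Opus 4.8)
The plan is to reduce the whole statement to the single assertion that the \FC\ $|X|$ is homeomorphic to $\BS^2$ (which is exactly the content of \Cr{cor FX i}). Indeed, once $|X|\cong\BS^2$, both halves of ``uniquely planar'' are immediate: the homeomorphism $|X|\to\BS^2$ is itself an embedding, giving existence; and for \emph{any} embedding $\sigma:|X|\to\BS^2$, the space $|X|$ is a $2$-sphere, so $\sigma$ is a topological embedding of a $2$-manifold into $\BS^2$, whence $\sigma(|X|)$ is open by invariance of domain and closed by compactness, hence all of $\BS^2$; thus every embedding is onto, $\sigma$ and $\sigma'$ are homeomorphisms, and $h:=\sigma'\circ\sigma^{-1}$ works. (Alternatively, uniqueness follows at once from \Lr{TR hom S gen}, since $|X|\cong\BS^2$ is $3$-connected, compact, locally connected and Hausdorff.)

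To prove $|X|\cong\BS^2$ I would invoke Richards' classification \Tr{Richards}. The end space $\OO(X)$ is the inverse limit of the finite sets $\pi_0(X\sm K_i)$ along a subsurface exhaustion, hence compact, metrizable and totally disconnected, so it embeds in $\BS^2$ as a closed totally disconnected set $E$ (being homeomorphic to a closed subset of the Cantor set). Put $Y:=\BS^2\sm E$. Since a compact totally disconnected subset of $\BS^2$ does not separate it (Alexander duality), $Y$ is a connected, open, planar surface. The plan is then to establish the key sub-claim
\[
\text{the inclusion } Y\hookrightarrow\BS^2 \text{ extends to a homeomorphism } \bar\iota:|Y|\to\BS^2,
\]
which in particular identifies $\OO(Y)$ with $E$. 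Granting this, $\OO(X)\cong E\cong\OO(Y)$, so \Tr{Richards} gives $X\cong Y$; this homeomorphism is proper, hence extends to $|X|\cong|Y|\cong\BS^2$, as required.

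The sub-claim is the heart of the argument. For an end $\oo$ of $Y$ with defining components $U_1\supseteq U_2\supseteq\cdots$ (components of $Y\sm K_i$), the closures $\cls{U_i}$ taken in $\BS^2$ form a nested sequence of continua whose intersection lies in $E$ (every point of $Y$ is eventually excluded, since $U_i$ avoids $K_i$); being a nested intersection of continua inside a totally disconnected set it is a single point, which I set to be $\bar\iota(\oo)$. Continuity (the basic neighbourhoods $U_i\cup\OO(U_i)$ map into $\cls{U_i}$, which shrink to $\bar\iota(\oo)$) and surjectivity (as $E$ has empty interior, every point of $E$ is a limit of points of $Y$ and hence the image of some end) are routine. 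Since $|Y|$ is compact and $\BS^2$ is Hausdorff, it then suffices to prove $\bar\iota$ \emph{injective}, and conclude from the fact that a continuous bijection from a compact space to a Hausdorff space is a homeomorphism.

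Injectivity is the main obstacle, and the place where total disconnectedness of $E$ is used decisively: on $Y$ the map is the identity and ends map into $E$, so only two distinct ends $\oo\neq\chi$ could collide. Here I would take the exhaustion $\{K_i\}$ to consist of \emph{connected} compact subsurfaces whose boundaries are disjoint circles in $Y$ (Radó). As $Y$ is planar, $\BS^2\sm K_i$ splits into finitely many open disks $D_1,\dots,D_k$, one per boundary circle, with pairwise disjoint closures (distinct boundary circles are disjoint and each $\cls{D_\ell}$ meets no other $D_m$ or $J_m$), and $D_\ell\sm E$ is connected for each $\ell$ (a disk minus a compact totally disconnected set). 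Choosing $i$ large enough to separate $\oo$ from $\chi$, the two ends lie in distinct regions $D_\ell\sm E$ and $D_m\sm E$ with $\ell\neq m$, so $\bar\iota(\oo)\in\cls{D_\ell}$ and $\bar\iota(\chi)\in\cls{D_m}$; since $\cls{D_\ell}\cap\cls{D_m}=\emptyset$, we get $\bar\iota(\oo)\neq\bar\iota(\chi)$. This yields injectivity, completes the sub-claim, and hence the lemma. The only further inputs are standard: the subsurface exhaustion, non-separation by totally disconnected planar sets, and their empty interior.
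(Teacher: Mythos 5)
Your proof is correct, but it takes a genuinely different route from the paper's. The paper triangulates $X$ (Rad\"o), refines the triangulation so that its 1-skeleton $T$ is 3-connected, embeds $|T|$ in $\BS^2$ via \Lr{FG embeds}, uses Whitney's \Tr{imrcb} to see that each triangle of the triangulation bounds a face in this embedding, extends the embedding over the 2-cells to obtain an embedding of $|X|$, and then invokes \Lr{TR hom S gen}; the homeomorphism $|X|\cong\BS^2$ of \Cr{cor FX i} is only extracted afterwards by a separate argument using \Lr{scc}. You reverse this order: you first prove $|X|\cong\BS^2$ outright, by realising $\OO(X)$ as a compact totally disconnected set $E\subset\BS^2$, showing by hand that the \FC\ of $\BS^2\sm E$ is $\BS^2$ itself, and transporting this back to $X$ via Richards' classification (\Tr{Richards}); unique planarity then drops out from invariance of domain (or from \Lr{TR hom S gen} applied to $\BS^2$). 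Your approach is more purely point-set-topological, delivers \Cr{cor FX i} as the main step rather than as a by-product, and avoids the Thomassen--Richter graph machinery of \Lr{FG embeds} and \Tr{imrcb} entirely; the price is a heavier reliance on \Tr{Richards} (which the paper states anyway and uses elsewhere) and on a handful of standard facts whose verification is where all the work sits --- non-separation of $\BS^2$ by a compact totally disconnected set, connectedness of a disc minus such a set, and the existence of an exhaustion of $\BS^2\sm E$ by compact connected bordered subsurfaces with pairwise disjoint boundary circles (which itself ultimately rests on triangulability, so Rad\"o has not really been avoided). The one genuinely delicate point is the injectivity of your extension $\bar\iota$, and your argument for it --- two inequivalent ends eventually live in components $D_\ell\sm E$, $D_m\sm E$ of $Y\sm K_i$ whose closures $\cls{D_\ell},\cls{D_m}$ are disjoint, so the nested intersections defining their images cannot coincide --- is sound.
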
 
\begin{proof} 	
Our first aim is to show that $|X|$ admits an embedding in $\BS^2$. We will do this by applying \Lr{FG embeds} on a triangulation of $X$.

Let $D$ be a locally finite triangulation of $X$, which exists by a well-known theorem of Rad\"o \cite{RadRiem,ZVC}, and let $T$ be its 1-skeleton. Then $T$ is a 2-connected graph by definition. Therefore, there is an embedding $\sigma: |T| \to \BS^2$ by \Lr{FG embeds}. In fact, we may assume $T$ to be 3-connected: we can subdivide each edge of $D$ by putting a new vertex at its midpoint, and triangulate each original triangle $\Delta$ of $D$ into 4 smaller triangles by adding the three edges joining the midpoints of the three edges of $\Delta$. If  $T$ is the 1-skeleton of the resulting triangulation, then it is indeed 3-connected because the neighbourhood of $\Delta$ is still connected after removing any 2 vertices from $\Delta$.

Assuming, as we now can, that $T$ is 3-connected, we deduce that our embedding $\sigma: |T| \to \BS^2$ is essentially unique when restricted to $T$ by (the infinite version of) Whitney's \Tr{imrcb}. Therefore, the 1-skeleton of each triangle of $D$ bounds a face of $\sigma$, because it does so in the embedding of $T$ induced by the identity on  $X \subseteq \BS^2$. Thus we can extend $\sigma$ from $T$ to $D$ by mapping each 2-cell into the corresponding face of $\sigma$. Since $T$ and $X$ have the same ends by the definitions, we have thus extended $\sigma$ into an embedding $\phi: |X| \to \BS^2$.

We can now apply \Lr{TR hom S gen} to deduce that $|X|$ is uniquely planar.
\end{proof}

As a consequence of the first claim of this proof, we obtain \Cr{cor FX i}, which we restate for convenience:
\begin{corollary} \label{cor FX}
Let $X \subseteq \BS^2$ be a 2-manifold. Then $|X|$ is homeomorphic to $\BS^2$.
\end{corollary}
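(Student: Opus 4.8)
The plan is to reuse the embedding $\phi: |X| \to \BS^2$ constructed in the first part of the proof of \Lr{Freu X}, and to upgrade it to a homeomorphism onto all of $\BS^2$. Since $|X|$ is compact (it is the \FC\ of $X$) and $\BS^2$ is Hausdorff, a surjective such $\phi$ is automatically a homeomorphism; indeed $\phi$ is already an embedding, so everything reduces to proving that $\phi$ is onto, i.e.\ that $\phi(|X|) = \BS^2$.

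First I would set up the relevant open/closed decomposition of the sphere. As $|X|$ is compact, $\phi(|X|)$ is closed in $\BS^2$, so its complement $W := \BS^2 \sm \phi(|X|)$ is open, and we have the partition $\BS^2 = \phi(X) \sqcup \phi(\OO(X)) \sqcup W$. Because $X$ is a $2$-manifold and $\phi|_X$ is a continuous injection, invariance of domain shows that $\phi(X)$ is open in $\BS^2$; note this does \emph{not} follow merely from $\phi$ being an embedding of $|X|$, since $\phi(|X|)$ itself need not be open. Hence $\BS^2 \sm \phi(\OO(X)) = \phi(X) \sqcup W$ is a partition of a subset of the sphere into two disjoint open sets.

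The main point is then to show $W = \emptyset$. For this I would use two properties of the set $\phi(\OO(X))$. It is compact, being the image under the continuous $\phi$ of the subspace $\OO(X)$, which is closed in the compact space $|X|$ and hence compact. It is also totally disconnected: with the topology from \Sr{sec FC}, the basic sets $\OO(U)$ are clopen in $\OO(X)$, so $\OO(X)$ has a basis of clopen sets, and total disconnectedness is preserved by the homeomorphism $\phi$ onto its image. Now I would invoke the classical separation theorem that a compact totally disconnected (equivalently, $0$-dimensional) subset of $\BS^2$ does not separate it, so its complement is connected. Thus $\phi(X) \sqcup W$ is connected; as a connected space cannot be the disjoint union of two nonempty open sets and $\phi(X) \neq \emptyset$, we conclude $W = \emptyset$, whence $\phi(|X|) = \overline{\phi(X)} = \BS^2$.

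The step I expect to carry the real content — rather than routine point-set bookkeeping — is the appeal to the non-separation of $\BS^2$ by a $0$-dimensional compactum; once the embedding $\phi$ from \Lr{Freu X} and this separation fact are in hand, the rest is formal. A secondary care point is the invariance-of-domain argument for the openness of $\phi(X)$, which is exactly what lets me treat $\phi(X)$ and $W$ symmetrically as the two open pieces whose union is forced to be connected.
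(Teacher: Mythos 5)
Your argument is correct, but it takes a genuinely different route from the paper's. The paper argues by contradiction: if $\sigma:|X|\to\BS^2$ misses a point $x$, it applies \Lr{scc} (Thomassen--Richter) with $K:=\sigma(|X|)$ to conclude that the face $F$ containing $x$ is bounded by a simple closed curve $S\subseteq\sigma(|X|)$; since $\OO(X)$ is totally disconnected, the connected set $S$ must contain a point $\sigma(y)$ with $y\in X$, and the Euclidean neighbourhood of $y$ in $X$ then contradicts $\sigma(y)\in\partial F$. You instead partition $\BS^2\sm\phi(\OO(X))$ into the two open sets $\phi(X)$ (open by invariance of domain) and $W:=\BS^2\sm\phi(|X|)$, and kill $W$ by the classical fact that a compact totally disconnected subset of $\BS^2$ does not separate it. Both proofs pivot on the total disconnectedness of $\OO(X)$ together with one plane-topology input; the paper's input is \Lr{scc}, which it already has in its toolbox (at the cost of implicitly needing $\sigma(|X|)$ to be $2$-connected and locally connected), while yours is the non-separation theorem for $0$-dimensional compacta, which makes the argument self-contained relative to that one classical theorem and avoids the Thomassen--Richter machinery altogether. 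Note also that the paper's proof quietly uses invariance of domain in its final sentence, so you are not importing anything the paper does not already rely on; your only extra external ingredient is the non-separation theorem, which you correctly flag as the step carrying the real content.
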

\begin{proof} 	
We showed above that $|X|$ admits an embedding $\sigma$ in $\BS^2$. If $\sigma$ is not surjective, pick $x\in \BS^2 \sm \sigma(|X|)$, and let $F$ be the face of $\sigma$ containing $x$. Applying \Lr{scc} with $K:= \sigma(|X|)$, we obtain that $F$ is bounded by a simple closed curve $S \subset |X|$. Since $\OO(X)$ is totally disconnected, $\sigma^{-1}(S)$ contains a point $y$ of $X$. We obtain a contradiction as $y$ has a neighbourhood $U$ homeomorphic to $\R^2$, and every neighbourhood of $\sigma(y)$ meets $F$.
\end{proof}

We remark that \Cr{cor FX} does not extend to 3 dimensions: let $M$ be the inside of a torus embedded in $\R^3$. Clearly $M$ is a 3-manifold embeddable in $\BS^3$. But $|M|$ is not homeomorphic to $\BS^3$, and in fact it is not homeomorphic to a 3-manifold. Indeed, the boundary $|M|\sm M$ consists of a single point $x$, and $x$ has arbitrarily small open neighbourhoods $U$ \st\ $U\sm x$ is homeomorphic to $M$, hence not simply connected.\footnote{I thank Max Pitz for this observation.}

\medskip
We can now prove the main result of this section. Recall the definition of an orientation-preserving embedding $\sigma$ of a \Cg\ given before \Lr{pd ext}.
\begin{lemma}\label{lem vapf}
Let $\Gamma$ be a finitely generated group with a faithful and \pd\ action $\Gamma \act X$ on a 
connected 2-manifold $X$. Then there is a  finitely generated  \Cg\ \g of $\Gamma$ and an  embedding $\sigma: \g \to X$ \st\ $\sigma(V(G))$ has no accumulation points in $X$, 
and $\Gamma \act G$ coincides with the restriction of $\Gamma \act X$ to $\sigma(G)$.

Moreover, if $X \subseteq \BS^2$, then $\sigma$ defines a \cova\ embedding of $G$ into $\BS^2$. Furthermore, if $\Gamma \act X$ is orientation-preserving, then so is $\sigma$.
\end{lemma}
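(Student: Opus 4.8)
The plan is to build, inside $X$, an auxiliary locally finite plane graph on which $\Gamma$ acts freely, and then contract it to a \Cg\ using Babai's \Lr{babai}, following the warm-up sketched in \Sr{sketch}. First I would invoke \Lr{reg orb} to obtain an orbit $O=\Gamma p$ on which the action restricts to a regular action, so that $V:=O$ is in canonical bijection with $\Gamma$ (here faithfulness is used). Since $\Gamma$ is finitely generated, I fix a finite generating set $S$ and set $G_0:=Cay(\Gamma,S)$, identifying $V(G_0)$ with $O$. For each $s\in S$ I choose an arc $\alpha_s$ in $X$ from $p$ to $sp$ (possible since a connected 2-manifold is arc-connected), and realise the edge from $g$ to $gs$ by the translate $g\alpha_s$; by construction this arc system is permuted by $\Gamma\act X$, i.e.\ it is equivariant.

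The crucial step is to arrange that this equivariant arc system is locally finite and in general position, so that treating the transverse, finitely many crossing points as new vertices yields a locally finite plane graph $G'$. Because the system is equivariant, I am only free to choose the finitely many base arcs $\alpha_s$, and two translates $g\alpha_s$, $h\alpha_t$ meet \iff\ $\alpha_s$ meets $(g^{-1}h)\alpha_t$. Proper discontinuity is exactly what makes this manageable: for each fixed pair $s,t\in S$, the compact arcs $\alpha_s,\alpha_t$ satisfy $\alpha_s\cap\gamma\alpha_t\neq\emptyset$ for only finitely many $\gamma\in\Gamma$ (condition \ref{pd ii}). Hence only finitely many transversality conditions must be imposed on the base arcs, and a generic choice makes all intersections transverse, finite, disjoint from $O$, and free of triple points. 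I expect this construction --- reconciling equivariance, which forbids perturbing individual translates independently, with general position, using proper discontinuity together with finiteness of $S$ --- to be the main obstacle; the remaining steps are comparatively formal.

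The resulting $G'$ is embedded in $X\subseteq\BS^2$ (hence plane), connected (as $G_0$ is), locally finite, has finitely many $\Gamma$-orbits of vertices and edges, and $\Gamma$ acts on it by restricting $\Gamma\act X$. Next I would make this action free: the regular action on $O$ has trivial vertex stabilisers, and a routine modification --- subdividing edges and separating the two parallel arcs arising from generators of order two, so as to avoid the fixed points of the inverting involutions --- removes the only obstructions (the modification alluded to in \Sr{sketch}, needed only when $\Gamma$ has involutions). With $\Gamma\act G'$ free and $G'$ connected, \Lr{babai} provides a connected subgraph $D$ meeting each $\Gamma$-orbit once with $G:=G'/D$ a \Cg\ of $\Gamma$, finitely generated since $G'$ has finitely many edge-orbits. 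As the translates $gD$ are pairwise disjoint connected subgraphs partitioning $V(G')$, each containing exactly one point of $O$, contracting them in the plane realises $G$ as an embedded graph $\sigma:G\to X$ with $\sigma(V(G))=O$; being a single orbit of a \pd\ action, $O$ has no accumulation points in $X$, and the contraction is equivariant, so $\Gamma\act G$ is the restriction of $\Gamma\act X$.

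Finally, for the \emph{moreover} I would prove covariance through local rotation systems. Since $X$ is open in $\BS^2$, at each vertex the cyclic order of the incident edges with respect to $\sigma$ coincides with the one induced by the 2-manifold structure of $X$. Each $g\in\Gamma$ is a homeomorphism of $X$, and near any vertex it restricts to a homeomorphism between two discs, so it carries the rotation at each vertex to the rotation at its image, up to a single global reversal. Because the facial walks of a connected plane graph are determined locally by its rotation system, $g$ maps facial paths to facial paths, i.e.\ $\sigma$ is \cova. If moreover $\Gamma\act X$ is orientation-preserving, each $g$ preserves (rather than reverses) these cyclic orders; since $\Gamma$ acts transitively on $V(G)=O$ carrying the label-$s$ edge at $g$ to the label-$s$ edge at $hg$, the clockwise cyclic order of the generator labels is the same at every vertex, which is precisely the definition (given before \Lr{pd ext}) of $\sigma$ being orientation-preserving.
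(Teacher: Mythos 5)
Your construction of the Cayley graph is essentially the paper's: a regular orbit via \Lr{reg orb}, an equivariant system of arcs realising a finite generating set, proper discontinuity (condition \ref{pd ii} applied to the compact ``star'' of base arcs) to guarantee that each arc meets only finitely many translates, a modification to make the action free, and Babai's \Lr{babai} to contract the resulting plane graph to a Cayley graph. The paper carries out the re-embedding of the contracted graph a little more explicitly (a spanning tree of $D$ inside a disc neighbourhood meeting no other translate of $D$), but that part of your argument is sound, as is the ``no accumulation points'' claim and the orientation-preserving claim at the end.

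Two points diverge, one minor and one substantive. The minor one: your freeness fix only addresses involutions that invert an arc $g\alpha_s$ (order-two generators), but freeness can also fail at a crossing point: an involution $\gamma=hg^{-1}$ can swap two crossing translates $g\alpha_s$ and $h\alpha_s$ and thereby fix their intersection point, which is a vertex of $G'$. The paper handles this by blowing up every crossing vertex into a small circle meeting the four incident edge-germs, on which such a $\gamma$ acts as a fixed-point-free rotation; your described modification does not cover this case, though the same kind of local surgery does.

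The substantive gap is in the covariance step. You replace the paper's argument by the assertion that ``the facial walks of a connected plane graph are determined locally by its rotation system.'' This is a theorem for finite graphs, but for infinite plane graphs the faces of $\sigma$ are components of $\BS^2\sm\sigma(G)$, and whether a path lies on the boundary of such a component depends on where the ends of $G$ accumulate in $\BS^2$ --- global information not recorded by the rotation system. This is precisely the issue that the Whitney/Imrich/Thomassen--Richter results of \Sr{sec TR} exist to address, and those require 3-connectedness, which $G$ need not have. The paper closes the gap by first proving \Lr{Freu X} (every connected 2-manifold $X\subseteq\BS^2$ is uniquely planar, via a Rad\"o triangulation and \Lr{TR hom S gen}), so that each $g\in\Gamma$ --- a priori only a self-homeomorphism of $X$ --- extends to a self-homeomorphism of $\BS^2$ preserving $\sigma(G)$ setwise; such an extension visibly permutes the components of $\BS^2\sm\sigma(G)$ and hence maps facial paths to facial paths. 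Without some substitute for this extension step, your proof of the ``moreover'' clause is incomplete.
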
 
\begin{proof} 	
Pick a finite generating set $S$ of $\Gamma$. Let $o$ be a point of $X$ \st\ the orbit $\Gamma o$ is regular, which exists by \Lr{reg orb}. \Fe\ $s\in S$, pick an \arc{o}{so} $R_s$ in $X$. By straightforward topological manipulations we may assume that these arcs and their translates do not intersect too much: we can assume that \fe\ $s,t\in S$, and any $g\in \Gamma$, the intersection $R_s \cap gR_t$ is either empty or just one point. This means that the union $\bigcup_{s\in S} \Gamma R_s$ of all these arcs and their translates defines a (plane) graph $H$ embedded in $X$, the vertex set of which consists of our regular orbit $\Gamma o$ and all the intersection points $R_s \cap gR_t$ of our arcs. (We could assume that no three arcs meet at a point, but we will not need to.) To show that $H$ is a graph, we need to check that no arc $R_s$ is intersected by infinitely many other arcs. This is the case because $\Gamma$ acts \pd ly, and the `star' $\bigcup_{s\in S} R_s$ of $o$ is compact. 
\mymargin{perhaps the only argument that fails if Gamma is not f.g.}

Since $\Gamma o$ is an orbit of $\Gamma$, and  $S$ generates $\Gamma$, it follows easily that $H$ is connected. Moreover, $\Gamma \act X$ defines an action $\Gamma \act H$ by restriction. We would like to apply Babai's contraction \Lr{babai} to contract $H$ onto a \Cg\ of $\Gamma$, but the freeness condition is not necessarily satisfied because some involution $g\in \Gamma$ might exchange two crossing arcs $R_s, gR_s$, hence stabilising their intersection point. But this is easy to amend by a slight modification of $H$: we blow up each vertex $v$ of $H$ arising from an intersection  point into a circle that intersects $H$ at the edges incident with $v$ only, intersecting each of them exactly one. Thus $v$ is replaced in $H$ by a cycle $C_v$ all vertices of which have degree 3. Let $H'$ denote this modification of $H$.
It is now straightforward to check that $\Gamma$ acts freely on $H$. (If some involution reverses an arc $R_s$, then we treat its midpoint as an intersection vertex of $H$ and apply to it the aforementioned blow-up operation.)

After this modification, we can indeed apply \Lr{babai} to $H'$: we choose a connected subgraph $D$ meeting each orbit at exactly one vertex, and contract each translate $gD, g\in \Gamma$ of $D$ into a vertex to obtain a \Cg\ $G$ of $\Gamma$.

Next, we remark that $G$ embeds in $X$. For this, pick a spanning tree $T$ of $D$, and a neighbourhood $U$ of $T$ in $X$ homeomorphic to $\R^2$ meeting no other translate of $D$. Note that $o\in U$ by the definitions. For each edge $xy$ incident with (the contraction of) $D$ in $G$, with $x\in V(D)$, say, we pick an \arc{o}{x} $R_x$ in $U$, making sure that $R_x$ is disjoint from $R_z$ for $z\neq x$. We can now think of $o$ rather than $D$ as a vertex of $G$. We repeat this at every other translate $g D$ of $D$ by mapping $U$ to $gU$. The union of the translates of the arcs $R_x$ with the appropriate subarcs of the original arcs $R_s$ defining $H$  defines an embedding of $G$ in $X$. 

If $x\in X$ were an accumulation point of $\sigma(V(G))$, then any neighbourhood $U_x\ni x$ would contain infinitely many elements of the orbit of $o$, violating our definition \ref{pd iii} of a \pd\ action. Thus  $\sigma(V(G))$ has no accumulation point in $X$. 

Easily, the canonical action $\Gamma \act E(G)$ extends to the original action $\Gamma \act H$ obtained by restricting $\Gamma \act X$. Thus $\Gamma \act G$ extends to $\Gamma \act X$.

\medskip
Moreover, we claim that if $X \subseteq \BS^2$ then our embedding $\sigma$, which we can then think of as an embedding into $\BS^2$, is \cova. For this, recall that each $g\in \Gamma$ is a homeomorphism of $X$, which by \Lr{Freu X} extends into a homeomorphism of $\BS^2$. Since 
$\Gamma \act H$ is defined by restricting $\Gamma \act X$, it follows that $g$ maps every facial walk of $H$ into a facial walk. 

As $G$ was defined by contracting $\Gamma$-translates of a subgraph of $H$, it is now easy to see that $g$ maps every facial walk of $G$ with respect to $\sigma$ into a facial walk, and so $\sigma$ is \cova.

Similarly, if $\Gamma \act X$ was orientation-preserving, then by considering the orbit of the edges of $o$ in \g under $\Gamma \act X$ it is easy to see that $\sigma$ is orientation-preserving too.
\end{proof}

{\it Remark:} \Lr{lem fin fac} below is proved using similar ideas, and the reader is advised to read that lemma right after reading the proof of \Lr{lem vapf}.

\section{Finiteness of face-boundaries} \label{sec fin fac}

In this section we prove the implication \ref{T ii} $\to$ \ref{T iv} of \Tr{tfae}:

\begin{lemma}\label{lem fin fac}
If a group $\Gamma$ admits a finitely generated, \covaly\ planar \Cg, then $\Gamma$ admits a finitely generated, \covaly\ planar Cayley multi-graph with no infinite face-boundaries.
\end{lemma}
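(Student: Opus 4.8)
The plan is to enlarge the given covariantly embedded Cayley graph $G=Cay(\Gamma,S)$ by finitely many new generators, drawn as arcs that slice every face of infinite boundary into pieces bounded by finite cycles, and then to contract the resulting plane graph back to a Cayley multi-graph using Babai's \Lr{babai}. First I would replace the given \cova\ embedding of $G$ by an embedding $\sigma\colon|G|\to\BS^2$ of its \FC\ via \Cr{FG emb k1}, so that a walk is facial for $\sigma$ exactly when it is facial for the original embedding, with the ends of $G$ appearing as the points of $\sigma(\OO(G))$. Since $\Gamma$ acts transitively on $V(G)$ and $G$ is locally finite, every vertex lies on only finitely many faces, so $\sigma$ has finitely many $\Gamma$-orbits of faces, and in particular finitely many orbits of faces with infinite boundary. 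It therefore suffices to choose one representative $F$ from each such orbit and to cut $F$ into finite-boundary pieces in a way invariant under its setwise stabiliser, then propagate the construction across the orbit by the action; as the faces in one orbit are pairwise disjoint open disks bounded by simple closed curves (\Lr{scc}), arcs drawn inside distinct faces never meet.

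The substance of the proof is to realise each of these cuts with only finitely many orbits of arcs. Here I would appeal to \Lr{cutspace}, whose proof combines Droms' theorem that planar groups are finitely presented with Dunwoody's accessibility theorem; together these bound the combinatorial complexity of the ends meeting the face boundaries, so that finitely many orbits of arcs suffice to separate them. Realising such a family as arcs inside the representative faces cuts each infinite face into regions with finite cycle boundaries; although a single infinite face is thereby split into infinitely many pieces, these pieces fall into finitely many orbits and accumulate only at the ends on the face boundary. That such accumulation is compatible with all faces staying finite is exactly the content of \Cr{cor hom S}: each $g\in\Gamma$ extends to a homeomorphism $h_g$ of $\BS^2$, and these homeomorphisms may contract towards the fixed end-points, so that infinitely many finite faces can shrink towards an end-point without any of them acquiring that end on its boundary.

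Taking the union of $G$ with all the added arcs gives a plane graph every face of which has finite boundary; treating the finitely many orbits of crossings among these arcs as new vertices yields a connected, locally finite plane graph $G'$ on which $\Gamma$ acts with finitely many vertex-orbits and a $\Gamma$-\cova\ embedding. Blowing up any crossing fixed by an involution, exactly as in the proof of \Lr{lem vapf}, makes the action free, so Babai's \Lr{babai} applies and the contraction of a (finite) connected fundamental domain and its translates is a Cayley multi-graph $C$ of $\Gamma$ — the construction naturally produces parallel edges and loops, which is why condition \ref{T iv} is phrased for multi-graphs. This $C$ is finitely generated, since the fundamental domain is finite and $G'$ is locally finite; it is planar as a minor of $G'$; and its induced embedding is \cova\ because the arcs and the contraction were chosen $\Gamma$-equivariantly. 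It remains to check that no infinite face-boundary is created, and here the point is that contracting a finite connected subgraph of a plane graph all of whose faces have finite boundary cannot produce a face whose boundary reaches an end; hence every face of $C$ has finite boundary.

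The one genuinely hard step is the finiteness asserted in the second paragraph. A lone infinite face with finite stabiliser could never be cut into finite pieces by finitely many chords, since each chord removes only a finite piece and leaves the end-containing part infinite; so the whole argument hinges on the accessibility input packaged in \Lr{cutspace}, which is what both excludes such pathologies and bounds the number of orbits of cutting arcs needed. Everything else — the reduction to finitely many orbit representatives, the equivariant drawing of arcs, the blow-up, and the contraction — is a routine adaptation of the machinery already developed for \Lr{lem vapf} and \Lr{pd ext}.
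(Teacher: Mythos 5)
Your overall strategy is the paper's: add arcs derived from the finitely many orbits of cuts supplied by \Lr{cutspace}, form a plane supergraph, and contract via Babai's \Lr{babai}. But the heart of the proof --- the verification that the supergraph obtained by adding the arcs actually has no infinite face-boundary --- is missing. You assert that ``realising such a family as arcs inside the representative faces cuts each infinite face into regions with finite cycle boundaries'', but this is precisely what must be proved, and it is not a formal consequence of \Lr{cutspace}. The paper's argument supposes the supergraph $G^+$ has a facial double-ray $R$ and splits into two cases. If the two tails of $R$ converge to \emph{distinct} ends $\oo,\chi$ on the boundary of a face $F$ of $\sigma$, one uses that some translate $gC$ of a cut $C\in D$ separates $\oo$ from $\chi$, hence must meet each of the two $\oo$--$\chi$ arcs of the circle $\partial F$ in an edge (via \Lr{cut arc}); the arc of $\ca$ joining those two edges then separates $\oo$ from $\chi$ inside $\cls{F}$ and must be crossed by $\cls{R}$, contradicting that $R$ is facial. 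This is why the added arcs are indexed by \emph{pairs of edges of a cut lying in a common face-boundary}; your proposal never pins down which arcs to draw, so this argument cannot be run. Worse, if the two tails of $R$ converge to the \emph{same} end, no cut separates them and the above fails entirely; the paper rules this case out by an external result of Kr\"on (disjoint tails of a facial double-ray in an almost transitive plane graph with finite degrees lie in distinct ends), which your proposal does not mention and which does not follow from anything you have set up.

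A secondary gap: you claim the crossings among the added arcs fall into finitely many orbits and that the resulting plane graph is locally finite, but without first extending $\Gamma\act G$ to a properly discontinuous action on $\BS^2\sm\sigma(\OO(G))$ via \Lr{pd ext}, there is no reason a single arc should be crossed only finitely often by translates of the others. The paper invokes \Lr{pd ext} for exactly this purpose (and remarks that it knows no proof of the lemma avoiding it); your proposal cites \Lr{pd ext} only as background ``machinery'' without using its conclusion where it is actually needed. The remaining steps --- the blow-up to make the action free, the Babai contraction, the observation that contracting finite connected subgraphs preserves finiteness of face-boundaries, and the reduction of the non-3-connected case via \Lr{ext 3cd} --- match the paper and are fine in outline.
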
 

I do not know how to prove this without using \Lr{pd ext}, which is the essence of the implication \ref{T ii} $\to$ \ref{T iii}. \smallskip

We will need the following terminology.
If $\{V_1,V_2\}$ is a bipartition of the vertex set $V(G)$ of a graph \G, then the set of edges with exactly one endvertex in each $V_i$ is called a  \defi{cut} of \G.
The \defi{cut space} $\cb(G)$  is the vector space over the 2-element field generated by the finite cuts of \G; see \cite[\S 1.9]{diestelBook05} for a precise definition.

\begin{lemma}\label{cutspace}
Let $G= Cay(\Gamma,S)$ be a finitely generated planar \Cg. Then there is a finite set $D$ of cuts of \g \st\ $\bigcup \Gamma D$ generates the cut space of \G.
\end{lemma}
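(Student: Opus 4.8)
The plan is to recast the statement as the assertion that the cut space $\cb(G)$ is finitely generated as a module over the group ring $\mathbb{F}_2\Gamma$, and then to extract such a finite generating set from an accessible splitting of $\Gamma$. Write $V=V(G)=\Gamma$ and let $\delta\colon \mathbb{F}_2^{V}\to \mathbb{F}_2^{E(G)}$ be the coboundary operator sending a set $W\subseteq V$ to the cut $\delta(W):=E(W,V\setminus W)$. Set $\mathcal{F}:=\{W\subseteq V : \delta(W)\text{ is finite}\}$. Since an edge lies in $\delta(W_1\triangle W_2)$ only if it lies in $\delta(W_1)\cup\delta(W_2)$, the finite cuts are already closed under addition, so $\cb(G)=\delta(\mathcal{F})$; and as $G$ is connected, $\ker(\delta\restr\mathcal{F})=\{\emptyset,V\}$, whence $\cb(G)\cong \mathcal{F}/\{\emptyset,V\}$ as $\mathbb{F}_2\Gamma$-modules. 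Because $\delta$ is $\Gamma$-equivariant, a finite set of vertex sets $W_1,\dots,W_m$ generating $\mathcal{F}$ as an $\mathbb{F}_2\Gamma$-module yields, via $D:=\{\delta(W_1),\dots,\delta(W_m)\}$, a finite set of cuts with $\bigcup\Gamma D$ generating $\cb(G)$. So it suffices to prove that $\mathcal{F}$ is a finitely generated $\mathbb{F}_2\Gamma$-module.

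Next I would peel off the ``trivial'' part. The finite subsets of $V$ form the cyclic submodule of $\mathcal{F}$ generated by the singleton $\{o\}$ (where $o$ is the identity vertex), and together with the $\Gamma$-fixed element $V$ they span a finitely generated submodule $\mathcal{F}_0\le\mathcal{F}$; note $\delta(\{o\})$ is just the vertex star, a finite cut. Hence it remains to show that the quotient $\mathcal{F}/\mathcal{F}_0$ is finitely generated. This quotient is the reduced space of ends of $\Gamma$: it is $0$ when $\Gamma$ is one-ended and one-dimensional over $\mathbb{F}_2$ when $\Gamma$ has two ends, so in those cases we are immediately done, and the only real case is when $\Gamma$ has infinitely many (a Cantor set of) ends.

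For the remaining case I would invoke the two cited inputs. By Droms' theorem \cite{DroInf} the finitely generated planar group $\Gamma$ is finitely presented, hence by Dunwoody's theorem \cite{DunAcc} it is accessible: $\Gamma$ acts on a tree $T$ with finite edge-stabilisers, finitely many orbits of edges, and vertex-stabilisers that are finite or one-ended. Fixing $t_0\in V(T)$ gives a $\Gamma$-equivariant map $\phi\colon V\to V(T)$, $\phi(g)=g t_0$. Each oriented edge $e$ of $T$ splits $V(T)$ into the two halves of $T-e$; pulling this partition back through $\phi$ yields a set $W_e\subseteq V$, and $\delta(W_e)$ is finite because a Cayley edge $\{x,xs\}$ crosses $e$ only when $x^{-1}e$ lies on the finite $T$-path $[t_0,st_0]$, and for each of the finitely many edges $f$ on such a path and each generator $s$ the set $\{x:xf=e\}$ is empty or a (finite) coset of the finite stabiliser of $f$. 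Choosing one $W_e$ per edge-orbit, together with $\{o\}$ (and $V$, whose coboundary is $0$ and may be dropped), gives the candidate generators of $\mathcal{F}$, and $D$ is the collection of their finite coboundaries.

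The crux — and the step I expect to be the main obstacle — is to show that these finitely many tree-cuts, modulo $\mathcal{F}_0$, generate $\mathcal{F}/\mathcal{F}_0$. Given $W\in\mathcal{F}$, its finite coboundary crosses only finitely many of the (pairwise nested) tree-cuts, so one expects to correct $W$ by a finite $\mathbb{F}_2\Gamma$-combination of the $W_e$ down to a set that, up to a finite set, is supported inside the $\phi$-territory of a single vertex of $T$; there, one-endedness of the vertex stabiliser forces the residue to be finite or cofinite, i.e. to lie in $\mathcal{F}_0$. Making this precise is exactly the content of Dunwoody's structure-tree machinery — the nestedness of the minimal finite cuts and the Boolean-ring structure of $\mathcal{F}$ developed by Dicks and Dunwoody — which I would either quote directly or reprove through the nested-cut decomposition. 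Once $\mathcal{F}/\mathcal{F}_0$ is finitely generated, so is $\mathcal{F}$, and the set $D$ constructed above satisfies the lemma.
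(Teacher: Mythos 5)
Your proof follows essentially the same route as the paper, which simply chains the same three ingredients: Droms' theorem (finitely generated planar groups are finitely presented), Dunwoody's accessibility theorem, and the Dicks--Dunwoody result (their Corollary IV.7.6) that accessible groups have a cut space generated by finitely many orbits of cuts. Your additional unpacking of the reduction to module finite generation and the construction of the tree-cuts is sound, but at the crux you defer to exactly the Dicks--Dunwoody machinery that the paper cites, so there is no substantive difference in approach.
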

\begin{proof}
By a theorem of Droms \cite[Theorem 5.1]{DroInf}, every group $\Gamma$ that admits a finitely generated planar \Cg\ is finitely presented. By a theorem of Dunwoody \cite{DunAcc}, every finitely presented $\Gamma$ is accessible. The fact that every accessible group satisfies the conclusion of our statement is proved in \cite[Corollary~IV.7.6]{dicks_dunw}.
\end{proof}

\begin{proof}[Proof of \Lr{lem fin fac}]

Let \g be a finitely generated, \covaly\ planar \Cg\ of  $\Gamma$.
To simplify our exposition, let us first assume that \g is 3-connected; we will later employ \Lr{ext 3cd} to treat the general case.

By \Lr{FG emb k1} we obtain an embedding $\sigma: |G|\to \BS^2$, and by \Lr{pd ext} the canonical action $\Gamma\act G$ extends into a \pd\ action\\ $\Gamma\act  \BS^2 \sm \sigma(\OO(G))$.

Our strategy for modifying \g into a planar \Cg\ of $\Gamma$ with no infinite face-boundaries can be sketched as follows. In each  face $F$ of \g with infinite boundary we embed some arcs in a $\Gamma$-invariant way in order to split $F$ into faces with finite boundaries. We choose those arcs so that treating their intersection points as vertices defines a plane supergraph $G^+$ of $G$ using the ideas of \Lr{lem vapf}. Then we apply Babai's contraction \Lr{babai} to the action of $\Gamma$ on $G^+$, to contract the latter onto a \Cg\ $H$ of $\Gamma$ inheriting the property that all faces have finite boundaries.
\medskip

To make this sketch precise, let $D$ be a finite set of cuts of \g\ \st\ $\bigcup \Gamma D$ generates $\cb(G)$, provided by \Lr{cutspace}.
If $\sigma$ has no infinite face-boundary then there is nothing to prove. Otherwise, for every $B\in D$, and any two edges $e,f\in B$ lying in a common face-boundary $\partial F$ of a face $F$ of $\sigma$, choose an arc $A_{e,f,F}$ in $\cls{F}$ joining an endvertex of $e$ to an endvertex of $f$, which arc exists because $\partial F$ is a simple closed curve by \Lr{scc} and so $\cls{F}$ is homeomorphic to a closed disc by the Jordan-Sch\"onflies theorem. (If $e,f$ lie in more than one such $\partial F$,  choose such an arc in each $F$.) Let $\ca$ denote the union of the $\Gamma$-orbits of all such arcs $A_{e,f,F}$ with $e,f\in B\in D$.
If these arcs are chosen appropriately, then as in the proof of \Lr{lem vapf}, the union of $\ca$ with \g is a plane graph $G^+$, which is a supergraph of \G, embedded in $\BS^2$ via an extension $\rho$ of $\sigma(G)$. Here, we used again the fact that any $A_{e,f,F}\in \ca$  intersects  only finitely many elements of $\ca$ since $\Gamma \act \BS^2 \sm \sigma(\OO(G))$ is \pd. This fact easily implies that the identity map defines a homeomorphism $h: \OO(G) \to \OO(G^+)$, and furthermore the extension $\rho': |G^+| \to \BS^2$ of $\rho$ obtained by mapping each $\oo\in \OO(G^+)$ to $\sigma(h^{-1}(\oo))$ is an embedding.

 By construction, the action $\Gamma \act \BS^2 \sm \sigma(\OO(G))$ defines an action $\Gamma \act G^+$.
As in  \Lr{lem vapf}, by blowing up each vertex of $G^+ \sm G$ into a circle if needed, we may assume that the latter action is free. By Babai's contraction \Lr{babai}, there is a connected subgraph $\Delta$ of $G^+$ \st\ $H:= G^+ / \Delta$ is a \Cg\ of $\Gamma$. Here, we keep any parallel edges and loops of $H$ resulting from these contractions, that is, $H$ is a multi-graph, because they could be needed to retain the finiteness of face boundaries.

It is easy to see that $\Delta$ is finite,  because as noted above every arc $A_{e,f,F}$ as in the definition of $G^+$ is intersected by only finitely many translates of such arcs.

As in  \Lr{lem vapf}, we can modify $\sigma$ into an embedding $\tau: H \to \BS^2$ so that every face boundary of $H$ can be obtained from one of  $G^+$ by contracting each maximal subpath contained in a translate of $\Delta$ into a vertex. We claim that all face-boundaries of $H$ in $\tau$ are finite, which proves our statement. By the previous remark, this claim will follow if we can show that $G^+$ has no infinite face-boundary, or equivalently, that $G^+$ has no facial double-ray.

Suppose, to the contrary, that $R$ is a facial double-ray of $G^+$, and let $R'$ be the corresponding facial double-ray of $H$, obtained by contracting each maximal subpath contained in a translate of $\Delta$. We distinguish the following two cases.\\ 
{\em Case 1:} If $R$ comprises two disjoint sub-rays belonging to the same end of $G^+$, then as $\Delta$ is finite, this situation passes on to $H$: the two tails of $R'$ belong to the same end of $H$. This however is ruled out by a result of Kr\"on \cite{KroInf}, stating that in any almost transitive, plane graph with finite vertex degrees, disjoint tails of a facial double-ray lie in distinct ends.\\
{\em Case 2:} If $R$ has two disjoint sub-rays belonging to distinct ends $\oo,\chi$ of $G^+$, then let $F$ denote the face of $|G|$ \wrt\ $\sigma$ containing $R$, which face exists since $G^+$ is a supergraph of \G, and $\rho(R)$ avoids $\sigma(\OO(G))$. 
Then $\oo,\chi \in \partial F$ because $\rho'$ is an embedding of $|G^+|$ into $\BS^2$ as we saw earlier.
We claim that 
\labtequ{A seps}{\ti\ an arc $A\in \ca$  \st\ $\oo,\chi $ lie in distinct components of $\cls{F} \sm A$.}

Before proving this claim, let us see why it leads to a contradiction. The closure $\cls{D}$ of $D$ in $|G^+|$ is an \arc{\oo}{\chi}  by the definitions, and so $\rho'(\cls{D})$ is an \arc{\oo}{\chi} in $\cls{F}$. Thus claim \eqref{A seps} means that $D$ crosses $A$, and so it cannot be facial. This contradiction proves that Case~2 cannot occur either.

It remains to prove \eqref{A seps}. For this, note that the cutspace $\cb(G)$ contains a cut $B$ separating $\oo$ from $\chi$. Thus its generating set $\bigcup \Gamma D$ must contain such a cut $g C$ with $C\in D$. We claim that $g C$ contains edges $e_1,e_2$ in $\partial F$ \st\ $\oo,\chi $ lie in distinct components of  $\partial F \sm \{e,f\}$. This is true because $\partial F$ is a simple closed curve contained in $\sigma(|G|)$ by \Lr{scc}, and so its preimage $Z:= \sigma^{-1}(\partial F)$ is a homeomorph  of $\BS^1$ in $|G|$. In particular, $Z$ contains two internally disjoint ${\oo}$--${\chi}$~arcs $Z_1,Z_2$. But then the cut $g C$ must meet both those arcs in order to separate $\oo$ from $\chi$, because \fe\ \arc{\oo}{\chi} $Y$ and every cut $J$ separating $\oo$ from $\chi$ we have $Y \cap J \neq \emptyset$ by \Lr{cut arc}. Thus we can let $e_i$ be any edge in $g C \cap \sigma(Z_i)$ for $i=1,2$.
Finally, by the definition of $G^+$, the latter contains an arc $A$ in $\cls{F}$ from an endvertex of $e_1$ to an endvertex of $e_2$. This arc disconnects $\oo,\chi $ in $\cls{F}$ because $e_1,e_2$ disconnect $\oo,\chi $ in $\partial F$.
\medskip

This completes our proof in the case where \g is 3-connected. If it is not, then we apply \Lr{ext 3cd} to embed \g into a planar 3-connected plane supergraph $G'$ to which the action of $\Gamma$ extends \covaly. We then repeat the above construction verbatim with $G$ replaced by $G'$. The only point that requires some care is to extend the conclusion of \Lr{cutspace} to $G'$. This is straightforward: we apply \Lr{cutspace} to \g to obtain a  set of cuts $D$. For each $B\in D$, we let $B'$ be the cut of $G'$ obtained by adding, \fe\ $e\in B$, the up to two `parallel' copies of $e$ from any ladders we attached to face-boundaries containing $e$ in the proof of \Lr{ext 3cd}. Moreover, for each of the finitely many $\Gamma$-orbits $O$ of vertices of $G'$, we pick a representative $v_O\in O$, and let $B_O$ be the cut comprising the edges incident with $v_O$. Then the collection $D':= \{B' \mid B\in D\} \cup \bigcup_{O\in V(G')/\Gamma} B_O$ of all these cuts of $G'$ is finite, and their translates  generate $\cb(G')$ as the reader can easily check.
\end{proof}

I do not know if \Lr{lem fin fac} remains true if one forbids loops and/or parallel edges:
\begin{problem}
Suppose $\Gamma$ admits a finitely generated, \covaly\ planar \Cg. Must $\Gamma$ admit a finitely generated, \covaly\ planar Cayley graph with no infinite face-boundaries?
\end{problem}

\section{Proof of \Tr{tfae} and \Cr{Kleinian}} \label{proof}

We now put the above results together to prove the main results of the paper:
\begin{proof}[Proof of \Tr{tfae}]
For  the implication\ref{T ii} $\to$ \ref{T iii}, let $G$ be a \covaly\ planar, finitely generated, Cayley graph of a group $\Gamma$. By \Lr{FG emb k1} there is a \cova\ embedding $\sigma: |G| \to \BS^2$. Applying \Lr{pd ext} we obtain a \pd\ action $\Gamma \act  (\BS^2 \sm \sigma(\OO(G))$ extending $\Gamma \act G$. Since $\Gamma \act G$ is faithful, so is $\Gamma \act  (\BS^2 \sm \sigma(\OO(G))$. We claim that $\BS^2 \sm \sigma(\OO(G))$ is one of the four 2-manifolds  in the statement of \ref{T iii}. This is indeed the case, because of the well-known fact, first observed by Hopf \cite{Hopf}, that $\OO(G)$ is either homeomorphic to the Cantor set or it contains at most 2 points. 


\medskip
For the implication \ref{T i} $\to$ \ref{T ii}, let $\Gamma \act X$ be a \pd\ faithful action where $X\subseteq \BS^2$ is 
 a 2-manifold. Then $X$ is uniquely planar by \Lr{Freu X}, and so \Lr{lem vapf}  yields the desired \Cg\ $G$. 

\medskip
The implication \ref{T ii} $\to$ \ref{T iv} is \Lr{lem fin fac}.

\medskip 
As \ref{T iii} $\to$ \ref{T i} and \ref{T iv} $\to$ \ref{T ii} are trivial, we have proved the equivalence of all four conditions.
\end{proof}

\begin{theorem}[{\cite[p.~299]{Maskit}}] \label{thm Maskit}
Let $p:\tilde{S} \to S$ be a regular covering of the (topologicaly) finite (Riemann) surface $S$, where $\tilde{S} $ is planar. Let $F$ be the group of deck transformations on $\tilde{S}$. There is a Koebe group $\Gamma$, with invariant component $D$, and there is a homeomorphism $h:\tilde{S} \to D$, so that $h_*: F \to \Gamma$ is an isomorphism.
\end{theorem}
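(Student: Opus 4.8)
The statement of Theorem~\ref{thm Maskit} is classical and due to Maskit; rather than reprove it I will recall the shape of the argument, which combines uniformisation with the Klein--Maskit combination theorems, and indicate where its weight lies.

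First I would uniformise the base. Since $S$ is a topologically finite Riemann surface, its fundamental group is finitely generated, and $S$ may be written as $U/G$ with $U$ its universal cover and $G\cong\pi_1(S)$ a finitely generated Fuchsian group. The regular covering $p$ then corresponds to a normal subgroup $N\triangleleft G$, so that $\tilde S = U/N$ and the deck group is $F\cong G/N$. By Koebe's general uniformisation theorem for schlichtartig surfaces, the hypothesis that $\tilde S$ is planar means that $\tilde S$ is conformally a plane domain $D_0\subseteq \BS^2$; the obstacle at this point is that the conformal action of $F$ on $D_0$ need not a priori be by M\"obius transformations, so $D_0$ by itself does not exhibit $F$ as a Kleinian group.

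The second and central ingredient is Maskit's planarity theorem, which characterises exactly those normal subgroups $N$ whose quotient cover is planar, showing in particular that $N$ is generated as a normal subgroup by elements representable by a system of disjoint simple loops on $S$. Guided by this loop system, I would build the Koebe group $\Gamma$ inductively, amalgamating elementary, Fuchsian and quasi-Fuchsian building blocks along the corresponding curves by repeated application of the first and second combination theorems. These guarantee that the resulting group is discrete and Kleinian, that it carries a distinguished invariant component $D$ with $D/\Gamma\cong S$, and that it is of Koebe type; the natural identification of $U/N$ with $D$ provides the homeomorphism $h\colon \tilde S\to D$, and the induced map on deck groups is the desired isomorphism $h_*\colon F\to\Gamma$.

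The hard part will be this last step: ensuring that the group assembled by the combination procedure is genuinely discrete, has no accidental relations or extra parabolics, and realises the abstract isomorphism type of $F$ with the correct invariant component. This is precisely where the original proof calls on Ahlfors' finiteness theorem, whose role is to bound, via the topological finiteness of $S$, the combinatorial complexity of the gluing so that the induction terminates and $h_*$ is an isomorphism rather than merely a surjection. I would therefore invoke these deep results as cited rather than reprove them; the genuine content of the theorem is the organisation of uniformisation, the planarity theorem and combination so that the abstract isomorphism $F\cong G/N$ is realised geometrically by a M\"obius group.
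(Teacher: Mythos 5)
The paper offers no proof of this statement at all: it is quoted verbatim from Maskit (\cite[p.~299]{Maskit}) and used purely as a black box in the proof of \Cr{Kleinian}. Your proposal correctly treats it as a classical external result, and your sketch of Maskit's argument (uniformisation, the planarity theorem, the combination theorems, with Ahlfors' finiteness theorem controlling the induction) is a faithful account of where the proof lives in the literature, so it is in essentially the same spirit as the paper's treatment.
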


Here, a surface $S$ is \defi{topologicaly finite}, if it is homeomorphic to the interior of a compact 2-manifold with or without boundary.
I will not repeat all the details (which can be found e.g.\ in \cite{Maskit}) of the definition of \defi{Koebe group}, as they are not needed in this paper. What matters for us is that every Koebe group is a function group by definition.

\begin{proof}[Proof of \Cr{Kleinian}]
The backward direction is obvious: if $\Gamma$ is a function group, then its action on an invariant component of its domain of discontinuity is as desired.

For the forward direction, let  $\Gamma \act X$ be a faithful, \pd\  action by orientation-preserving homeomorphisms on the surface $X\subseteq \BS^2$. We would like to apply \Tr{thm Maskit} with $\tilde{S} =X$ and $S= X/\Gamma$, but $\Gamma \act X$ need not be free, and $X/\Gamma$ need not be topologically finite. Therefore, we will use the above results to find a better action of $\Gamma$.

Indeed, the implication \ref{T i} $\to$ \ref{T iv} of \Tr{tfae} yields a planar \Cg\ \g of $\Gamma$, with a $\Gamma$-\cova\ embedding $\sigma: |G| \to \BS^2$ every face of which is bounded by a cycle. Moreover, when applying  \Lr{lem vapf} to prove the implication \ref{T i} $\to$ \ref{T ii}, we use the last statement of \Lr{lem vapf} to ensure that $\sigma$ is orientation-preserving. 
We then apply the last statement of \Lr{pd ext} to obtain a co-compact, \pd\ action $\Gamma \act Y$ with $Y:= \BS^2 \sm (\sigma(\OO(G)))$, where the set $Z$ of points of $Y$ with non-trivial stabiliser contains at most one point from the interior of each face of $\sigma$. Restricting $\Gamma \act Y$ to $X' := Y \sm Z$ thus yields a free action, in other words, $X'$ regularly covers $S:= (Y \sm Z)/\Gamma$. Since $\Gamma \act Y$ is co-compact, it is easy to see that $S$ is topologically finite. Thus we can apply  \Tr{thm Maskit} to deduce that $\Gamma$ is a Koebe group, and in particular a function group.
\end{proof}

As a corollary, we deduce that the finiteness condition in \Tr{thm Maskit} can be dropped if one only wants an algebraic rather than a `geometric' isomorphism between $F$ and some Koebe group (the finiteness condition cannot be dropped in general without this relaxation):

\begin{corollary} \label{cor Maskit}
Let $p:\tilde{S} \to S$ be a regular covering of a surface $S$, where $\tilde{S} $ is planar. Let $F$ be the group of deck transformations on $\tilde{S}$. Then $F$ is isomorphic to a Koebe group (equivalently, to a function group).
\end{corollary}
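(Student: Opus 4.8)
The plan is to recognise the deck-transformation action as an instance of condition \ref{T i} of \Tr{tfae} and then to re-run the proof of the forward direction of \Cr{Kleinian}, which already produces a Koebe group rather than merely a function group.

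First I would invoke the standard facts of covering-space theory: the deck group $F$ of the regular covering $p:\tilde S\to S$ acts on $\tilde S$ by homeomorphisms, this action is free (a deck transformation with a fixed point is the identity) and properly discontinuous, and $\tilde S/F=S$. Freeness gives faithfulness. Since $\tilde S$ is a planar surface, i.e.\ a connected $2$-manifold homeomorphic to a subset of $\BS^2$, we thus have a faithful, properly discontinuous action by homeomorphisms on a planar surface, which is exactly condition \ref{T i}.

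Next I would verify that this action is orientation-preserving. Being planar, $\tilde S$ is orientable; pulling an orientation of $S$ back along the local homeomorphism $p$ endows $\tilde S$ with an orientation that every deck transformation $g$ preserves, since $p\circ g=p$ forces $g^{*}(p^{*}\mathrm{or}_S)=p^{*}\mathrm{or}_S$. (Here $S$ is orientable, as it is a Riemann surface in the setting of \Tr{thm Maskit}.) Hence $F$ acts by orientation-preserving homeomorphisms on a planar surface, which is precisely the hypothesis of the forward direction of \Cr{Kleinian}. Re-running that proof verbatim with $\Gamma=F$ and $X=\tilde S$ — it uses only finite generation together with a faithful, properly discontinuous, orientation-preserving action on a planar surface, passes through the implication \ref{T i}$\to$\ref{T iv}, and then applies \Tr{thm Maskit} to the co-compact action supplied by the last statement of \Lr{pd ext} — yields that $F$ is a Koebe group, hence in particular a function group.

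The main obstacle is finite generation. \Cr{Kleinian}, and hence \Tr{tfae}, is stated for finitely generated groups, so the argument establishes the corollary under the assumption that $F$ is finitely generated. This is still a genuine weakening of \Tr{thm Maskit}: topological finiteness of $S$ forces $\pi_1(S)$, and hence the quotient $F=\pi_1(S)/p_*\pi_1(\tilde S)$, to be finitely generated, whereas here $S$ may be topologically infinite provided $F$ itself stays finitely generated. Removing the finite-generation hypothesis on $F$ altogether would instead require the (conjectural) extension of \Tr{tfae} to infinitely generated groups discussed in \Sr{sketch}, which I would not attempt here.
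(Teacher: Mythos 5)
Your proof is correct and follows the same route as the paper: observe that the deck group acts freely (hence faithfully) and, by the definition of a covering, properly discontinuously on the planar surface $\tilde S$, and then invoke \Cr{Kleinian}. You are in fact more explicit than the paper's two-line proof about the orientation-preserving hypothesis and the implicit finite-generation assumption on $F$, both of which the paper glosses over.
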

\begin{proof}
By the definition of deck transformations, $F$ acts freely by homeomorphisms on $\tilde{S}$, and by the definition of a covering this action is \pd. Thus we can apply \Cr{Kleinian} to deduce that $F$ is isomorphic to a Koebe group.
\end{proof}

\subsection{Geometrizing the action} \label{sec geom}
As mentioned in the introduction, any action $\Gamma \act X$ as in \ref{T iii} can be `geometrised' to obtain an 
 action by isometries on a smooth manifold with the same properties. This can be proved as follows.
Recall that every 2-manifold is homeomorphic to a smooth manifold \cite{MoiseGT}. Thus it just remains to endow $X$ with a $\Gamma$-invariant metric $d$. A standard way to do this (which I learnt from a mathoveflow post\footnote{\url{https://mathoverflow.net/questions/251627/proper-discontinuity-and-existence-of-a-fundamental-domain}} by Misha Kapovich.) is by endowing $X$ with a $\Gamma$-invariant Riemannian metric $g$, and letting $d$ be the corresponding distance function.  This $g$ can be constructed by first constructing an arbitrary Riemannian metric on the quotient orbifold $X/\Gamma$, e.g.\ using a partition of unity, and then lifting $g$ to $X$. Here, we used the well-known observation of Thurston that for every \pd\ action on a manifold, the quotient space is an orbifold, see e.g.\ \cite[Proposition~20]{BorRie}.

\section{Determining $X$} \label{X}

The main aim of this section is to prove \Tr{ext pd i}.\smallskip

We say that an end $\oo\in \OO(X)$ is an \defi{accumulation end} \wrt\ an action $\Gamma \act X$, if \oo\ is an accumulation point of some orbit $\Gamma x, x\in X$ for the extension $\Gamma \act |X|$ of the action to the \FC\ of $X$. 

Given topological spaces $Y\subset X$, we write $\lim Y$ for the set of accumulation points of $Y$ in $X$.

\begin{proposition}\label{def acc end}
Let $X$ be an arc-connected locally compact space, $\Gamma \act X$ a \pd\ action, and $\oo\in \OO(X)$. Then the following are equivalent:
\begin{enumerate}
\item \label{ai} \ti\ $x\in X$ \st\ $\oo \in \lim \Gamma x$;
\item \label{aiii} \ti\ a compact $K\subset X$ \st\ $\oo \in \lim \Gamma K$;
\item \label{aiv} \fe\ compact $L\subset X$, we have $\oo \in \lim \Gamma L$; and 
\item \label{aii} \fe\ $x\in X$, we have $\oo \in \lim \Gamma x$.
\end{enumerate}
\end{proposition}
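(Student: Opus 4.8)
The plan is to prove the four equivalences by running the cycle \ref{aiii} $\Rightarrow$ \ref{ai} $\Rightarrow$ \ref{aii} $\Rightarrow$ \ref{aiv} $\Rightarrow$ \ref{aiii}, where throughout I read $\lim$ as accumulation in the compactification $|X|$, so that $\oo \in \lim \Gamma z$ means that every basic neighbourhood $U_i \cup \OO(U_i)$ of $\oo$ contains infinitely many points of the orbit $\Gamma z$; here $(U_i)$ is the nested sequence of components of $X \setminus K_i$ defining $\oo$, for a compact exhaustion $(K_i)$ of $X$, and $U_i \cup \OO(U_i)$ runs through a neighbourhood basis of $\oo$. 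Two of the four implications are immediate: \ref{aii} $\Rightarrow$ \ref{aiv} holds because for nonempty compact $L$ and any $x \in L$ we have $\Gamma x \subseteq \Gamma L$, whence $\lim \Gamma L \supseteq \lim \Gamma x \ni \oo$; and \ref{aiv} $\Rightarrow$ \ref{aiii} is trivial, applying \ref{aiv} to a single compact set (one exists since $X$ is nonempty and locally compact). So the real work is in the two ``existence $\Rightarrow$ universality'' steps \ref{ai} $\Rightarrow$ \ref{aii} and \ref{aiii} $\Rightarrow$ \ref{ai}.

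The heart of the matter is a transport principle: if $C \subseteq X$ is compact and \emph{connected} and $\oo \in \lim \Gamma C$, then $\oo \in \lim \Gamma y$ for every $y \in X$. To prove it, I first extract from $\oo \in \lim \Gamma C$, for each $i$, infinitely many \emph{distinct} $g$ with $gC \cap U_i \neq \emptyset$ (otherwise $\oo$ would have a neighbourhood meeting $\Gamma C$ only in a compact subset of $X$, which cannot happen as such a set is closed in $|X|$ and misses $\oo$); a diagonal choice then yields distinct $g_n$ with $g_n C \cap U_n \neq \emptyset$, hence $g_n C \cap U_i \neq \emptyset$ for $n \geq i$. Given $y$, I choose an arc $P$ from $y$ to a point of $C$ and set $C' := C \cup P$, which is again compact and connected and contains $y$. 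Now fix $i$. By proper discontinuity (condition \ref{pd ii} applied to $C'$ and $K_i$) only finitely many $g$ satisfy $gC' \cap K_i \neq \emptyset$, so $g_n C' \subseteq X \setminus K_i$ for all large $n$; being connected, $g_n C'$ then lies in a single component of $X \setminus K_i$, and for $n \geq i$ this component is $U_i$, since $g_n C' \supseteq g_n C$ meets $U_i$. Hence $g_n y \in g_n C' \subseteq U_i$ for all large $n$. As $i$ was arbitrary, $g_n y \to \oo$ in $|X|$; since vertex stabilisers are finite (again by proper discontinuity), the $g_n y$ take infinitely many values, so $\oo \in \lim \Gamma y$. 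Taking $C = \{x\}$ gives \ref{ai} $\Rightarrow$ \ref{aii}.

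It remains to prove \ref{aiii} $\Rightarrow$ \ref{ai}, i.e.\ to pass from a possibly \emph{disconnected} compact witness $K$ to a single point, and this is where I expect the only delicate point to lie. From $\oo \in \lim \Gamma K$ I again obtain distinct $g_n$ and $k_n \in K$ with $g_n k_n \to \oo$, and after passing to a subsequence $k_n \to k_* \in K$ by compactness. The candidate point is $x := k_*$, and the goal is $g_n k_* \to \oo$. To reuse the trapping argument I need the translates $g_n$ to carry $k_*$ into the \emph{same} component $U_i$ as $k_n$, and for this I want a single compact connected set $N$ containing $k_*$ together with a tail of $(k_n)$: then $g_n N \subseteq X \setminus K_i$ is connected and meets $U_i$ (through $g_n k_n$) for large $n$, forcing $g_n k_* \in g_n N \subseteq U_i$ and hence $g_n k_* \to \oo$. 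Such an $N$ is supplied immediately by local compactness together with local connectedness, as holds for the manifolds and locally nice spaces to which the Proposition is applied, so the reduction goes through there without fuss. The one topological fact that must be checked in the full stated generality is precisely that a convergent sequence and its limit are contained in a compact connected subset of an arc-connected, locally compact space; this ``compact connected hull'' of $\{k_*\} \cup \{k_n\}$ is the main obstacle, and I would isolate and settle it (or else add the mild local hypothesis) before invoking the connected transport principle above to conclude \ref{aiii} $\Rightarrow$ \ref{ai}.
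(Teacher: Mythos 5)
Your overall technique --- join by an arc, use proper discontinuity to push a translated compact connected set off the separating compact set $S$, and use connectedness to trap that translate inside the single component $U$ of $X \sm S$ determining the end --- is exactly the engine of the paper's proof, and your transport principle for connected compact $C$ (hence your step \ref{ai} $\Rightarrow$ \ref{aii}, and the two trivial implications) is correct. The genuine gap is in \ref{aiii} $\Rightarrow$ \ref{ai}, and you have identified it yourself without closing it: you need a compact \emph{connected} set containing $k_*$ together with a tail of $(k_n)$, and arc-connectedness plus local compactness do not supply one (that requires something like local connectedness). A second, smaller defect in the same step is that extracting a convergent subsequence from $(k_n)\subseteq K$ uses sequential compactness of $K$, which the bare hypotheses (no metrizability is assumed in the proposition) also do not give. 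As written, the proposal therefore does not prove the proposition.

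The paper orients the cycle the other way, as \ref{ai} $\to$ \ref{aiii} $\to$ \ref{aiv} $\to$ \ref{aii} $\to$ \ref{ai}, so that the only non-trivial implication is \ref{aiii} $\Rightarrow$ \ref{aiv}: given the compact witness $K$ and an arbitrary compact target $L$, it joins them by a single arc $A$, sets $K' := K \cup A \cup L$, and runs precisely your trapping argument on the translates of $K'$; specialising $L$ to a singleton then delivers the two pointwise conditions for free, with no need to locate a distinguished point inside $K$, hence no convergent subsequences and no connected hulls of sequences. Your own transport principle already contains this argument --- you need only run it ``from $K$ towards $L$'' rather than trying to collapse $K$ onto one of its points. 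Be aware, though, that both routes quietly use connectedness of the joined set ($K'$, respectively your $C'$), so a disconnected witness $K$ causes friction either way; in the paper's applications $K$ is always a point or a connected separator, and the ambient spaces are locally connected (as the component-based construction of the Freudenthal compactification implicitly requires anyway), in which case one can simply fatten $K$ to a compact connected superset before applying the transport principle. With that one reduction added, your route closes as well.
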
 
\begin{proof}
The implications \ref{ai} $\to$ \ref{aiii} and  \ref{aiv} $\to$ \ref{aii} are trivial since points are compact. So is \ref{aii} $\to$ \ref{ai}. To show \ref{aiii} $\to$ \ref{aiv}, let $L$ be any compact subset of $X$, let $A$ be an \arc{K}{L}, and let $K':= K\cup A \cup L$. Note that $K'$ is compact. Let $U$ be a basic open neighbourhood of $\oo$ in $|X|$, and recall that $U\cap X$ is a component of $X \sm S$ for some compact $S\subset X$. Since $\oo \in \lim \Gamma K$, there are infinitely many elements of $\Gamma K$, and hence of $\Gamma K'$ meeting $U$. By definition \ref{pd ii} of a \pd\ action, at most finitely many of these elements meet the compact set $S$, and therefore infinitely many of them are contained in $U$. In particular, $U$ meets $\Gamma L$. As this holds for any  basic open neighbourhood $U$ of $\oo$, we deduce that $\oo \in \lim \Gamma L$ as claimed.
\end{proof}

Let $\OO^\Lambda(X)\subseteq \OO(X)$ denote the set of accumulation ends of $X$ \wrt\ $\Gamma \act X$. In other words, $\OO^\Lambda(X)$ is the limit set of the extension of the action $\Gamma \act X$ to $|X|$.

The proof of the following statement is standard; the main idea goes back to Hopf \cite{Hopf}.

\begin{theorem}\label{acc ends}
Let $\Gamma \act X$ be a \pd\ action on an arc-connected metrizable space $X$. 
If the space $\OO^\Lambda(X)\subseteq \OO(X)$ of  accumulation ends contains more than 2 points, then it is homeomorphic to the Cantor set.
\end{theorem}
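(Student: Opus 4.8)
The plan is to verify, for the limit set $\Lambda := \OO^\Lambda(X)$, the five properties characterising the Cantor set in \Prr{Cantor}: nonempty, compact, metrizable, totally disconnected, and perfect. The first four are soft. The space of ends $\OO(X)$ is compact, metrizable and totally disconnected by the standard theory of the \FC\ of a connected, locally compact, hemicompact space. Since the action is \pd, the orbit $\Gamma x_0$ has no accumulation point in $X$ (definition \ref{pd iii}), so $\lim \Gamma x_0 \subseteq \OO(X)$; as the set of accumulation points of a set is always closed, $\Lambda = \lim \Gamma x_0$ (via \Prr{def acc end}) is a closed subset of the compact $\OO(X)$, hence compact, and it inherits metrizability and total disconnectedness. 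Being invariant under the homeomorphisms induced by $\Gamma \act |X|$, it is a compact $\Gamma$-space. It remains to prove that $\Lambda$ is perfect, and this is where the hypothesis of having more than two points enters.

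The heart of the argument is a contraction lemma in the spirit of Hopf. Fix a basepoint $x_0$ and a connected compact exhaustion $K_1 \subseteq K_2 \subseteq \cdots$ of $X$ (available since $X$ is connected); recall that ends of $X$ are nested sequences of components of $X \setminus K_m$. First I record the interior contraction: if \seq{g} is a sequence of distinct elements with $g_n x_0 \to \xi \in \OO(X)$, then $g_n C \to \xi$ for every connected compact $C$, and in particular $g_n y \to \xi$ for every $y\in X$. Indeed, given any level $K_m$, proper discontinuity forces $g_n C \cap K_m = \emptyset$ for all large $n$; since $g_n C$ is connected and contains $g_n x_0$, which lies in the $\xi$-component of $X \setminus K_m$ once $n$ is large, the whole of $g_n C$ lies in that component, proving convergence to $\xi$.

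Next I would promote this to the boundary. Passing to a subsequence, let $\eta := \lim g_n^{-1} x_0 \in \OO(X)$ (this limit is an end because $g_n^{-1} x_0$ leaves every compact set, again by proper discontinuity). The key claim is that $g_n \zeta \to \xi$ for every end $\zeta \neq \eta$. To see this, fix levels $l$ and $m$ with $g_n K_m$ contained in the $\xi$-neighbourhood $U^\xi_l$ (possible for large $n$ by interior contraction applied to the connected set $K_m$) and with $\zeta,\eta$ in distinct components $U^\zeta_m \neq U^\eta_m$ of $X \setminus K_m$. Since $g_n^{-1} x_0 \in U^\eta_m$, the homeomorphism $g_n$ sends $U^\eta_m$ onto the component of $X \setminus g_n K_m$ containing $x_0$, hence containing the connected set $K_l$; a short connectedness argument then shows that every other component of $X \setminus g_n K_m$, and in particular $g_n U^\zeta_m$, is trapped inside $U^\xi_l$. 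Thus the end $g_n \zeta$ has a representative inside $U^\xi_l$ for all large $n$, and letting $l \to \infty$ gives $g_n \zeta \to \xi$. I expect this component-tracking step, together with the care needed to guarantee that $\OO(X)$ is well behaved (connected compact exhaustion, local compactness/hemicompactness of $X$), to be the main obstacle.

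Perfectness then follows cleanly from the hypothesis $|\Lambda| > 2$. Given $\xi \in \Lambda$, choose \seq{g} and $\eta$ as above. Since $|\Lambda| > 2$, the set $\Lambda \setminus \{\eta\}$ contains two distinct ends $\zeta_1,\zeta_2$. The claim gives $g_n \zeta_1 \to \xi$ and $g_n \zeta_2 \to \xi$ with $g_n \zeta_1, g_n \zeta_2 \in \Lambda$, and since each $g_n$ acts injectively on ends we have $g_n \zeta_1 \neq g_n \zeta_2$, so at least one of them differs from $\xi$ for every $n$. Hence every neighbourhood of $\xi$ meets $\Lambda \setminus \{\xi\}$, so $\xi$ is not isolated. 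Therefore $\Lambda$ is perfect, and by \Prr{Cantor} it is homeomorphic to the Cantor set.
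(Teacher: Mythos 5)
Your proof is correct, but it takes a genuinely different route from the paper's. You establish the full ``north--south'' contraction dynamics: after extracting a repelling end $\eta = \lim g_n^{-1}x_0$, you show $g_n\zeta \to \xi$ \fe\ end $\zeta \neq \eta$, and then use two points of $\Lambda \setminus \{\eta\}$ to accumulate on $\xi$ from within $\Lambda \setminus \{\xi\}$. The paper instead argues locally with a single well-chosen translate: it fixes three accumulation ends $\oo,\chi,\psi$, a \emph{connected} compact separator $K$ of them with sides $U_\oo, U_\chi, U_\psi$, assumes $U_\oo \cap \OO^\Lambda(X)=\{\oo\}$, and picks one $g\in\Gamma$ with $gK \subseteq U_\oo$ and $gK \cap K = \emptyset$; since $X \setminus U_\oo$ is connected and disjoint from $gK$, at most one of $g\oo,g\chi,g\psi$ can live outside $U_\oo$, forcing a second accumulation end into $U_\oo$ --- a contradiction. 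The hypothesis $|\OO^\Lambda(X)|>2$ enters at exactly analogous points (three ends for $gK$ to separate, versus two ends avoiding $\eta$), and both proofs share the same soft endgame (closed in the compact, totally disconnected $\OO(X)$, perfect, then \Prr{Cantor}). What your route costs is a little more infrastructure: you need a connected compact exhaustion and, to extract $\eta$, sequential compactness of $|X|$, neither of which the paper's single-separator argument uses; you also implicitly assume $x_0 \in C$ in the interior-contraction step (harmless, since one can enlarge $C$ by an arc to $x_0$, or take $C=K_m$). What it buys is the contraction dynamics itself --- a statement of independent interest that is strictly stronger than non-isolation and is the classical convergence-group formulation going back to Hopf, whom the paper also credits for the underlying idea.
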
 
\begin{proof} 	\mymargin{DOESN'T WORK: Alternative proof strategy: prove that the extension of $\Gamma \act X$ to $X'$ has a compact quotient (for otherwise any end would be a non-accumulation end of $X'$, which don't exist) and is therefore co-compact NO: SEE EXAMPLE IN REMARK BELOW. Then apply Hopf's main theorem, using \Tr{ext pd}.}
Suppose $\OO^\Lambda(X)$ contains three distinct accumulation ends $\oo,\chi,\psi$. We will show that none of them is an isolated point of $\OO^\Lambda(X)$.
Let $K\subset X$ be a compact set separating $X$ into subspaces $U_\oo \ni \oo, U_\chi \ni \chi, U_\psi \ni \psi$. We may assume that $K$ is connected, for otherwise we can enlarge it by a set of arcs joining its finitely many components.


Suppose, for a contradiction, that $\oo$ has a neighbourhood $U$ containing no other end in $\OO^\Lambda(X)$. By choosing a larger $K$ if needed, we may assume that $U= U_\oo$. 
Pick some $x\in K$ and $g\in \Gamma$ \st\ $gx \in U$ and $gK \cap K= \emptyset$, which exists since $U$ contains an accumulation end and $\Gamma \act X$ is \pd. Then $gK\subset U$ because $gK$ is connected and it meets $U$. Therefore, $gK$ disconnects no two ends of $X$ living outside $U_\oo$, because any two such ends live in the connected space $X \sm U$. But $gK$ disconnects $g\oo,g\chi,g\psi$ from each other, and so at least two of them live in $U$, contrary to our assumption.

This proves that if $\OO^\Lambda(X)$ contains more than two points, then it contains no isolated point. It is easy to check that $\OO^\Lambda(X)$ is closed (therefore compact since $\OO^\Lambda(X)\subset |X|$). As it is totally disconnected, it is homeomorphic to the Cantor set by \Prr{Cantor}.
\end{proof}


\begin{corollary}\label{cor four}
Let $\Gamma \act X$ be a faithful, \pd\ action on a 2-manifold $X\subseteq \BS^2$ \st\ all ends of $X$ are accumulation ends. Then $X$ is homeomorphic to the sphere, the plane, the open annulus, or the Cantor sphere.
\end{corollary}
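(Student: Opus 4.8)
The plan is to read off the homeomorphism type of $X$ entirely from its space of ends $\OO(X)$, exploiting that among planar surfaces this space is a complete homeomorphism invariant. The decisive point is that, by hypothesis, every end is an accumulation end, so $\OO(X)$ coincides with the limit set $\OO^\Lambda(X)$ of the extended action, which \Tr{acc ends} constrains severely. Thus the corollary should follow by feeding this constraint into the machinery already developed for planar surfaces.

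First I would verify that \Tr{acc ends} is applicable. Here $X$ is a connected $2$-manifold (connectedness being implicit in the setup, and needed anyway for arc-connectedness), so it is metrizable by our definition of manifold, and arc-connected, since a connected manifold is path-connected and any path in a Hausdorff space contains an arc between its endpoints. The hypothesis that all ends are accumulation ends means precisely $\OO(X) = \OO^\Lambda(X)$. Hence \Tr{acc ends} gives that $\OO^\Lambda(X)$, and therefore $\OO(X)$, either contains at most two points or is homeomorphic to the Cantor set. So $\OO(X)$ is of exactly one of four types: empty, a single point, a two-point space, or a Cantor set.

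To finish, I would invoke \Cr{cor FX}, which applies verbatim to our $X$ and says that $|X|$ is homeomorphic to $\BS^2$. Since $X$ is a manifold it is open in its \FC\ $|X|$, so under this homeomorphism $X$ maps to $\BS^2 \sm C$, where $C$ is a closed subspace of $\BS^2$ homeomorphic to $\OO(X)$. Running through the four cases: if $\OO(X) = \emptyset$ then $X$ is homeomorphic to all of $\BS^2$, the sphere; if $\OO(X)$ is a single point then $X \cong \BS^2 \sm \{x\} \cong \R^2$, the plane; if $\OO(X)$ consists of two points then $X \cong \BS^2 \sm \{p,q\}$, the open annulus; and if $\OO(X)$ is a Cantor set then $X \cong \BS^2 \sm C$ with $C$ a Cantor set, which is by definition the Cantor sphere (well-defined by \Tr{Richards}). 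This exhausts the possibilities and proves the statement. As an alternative ending one could bypass \Cr{cor FX} and apply \Tr{Richards} directly, matching the Freudenthal boundary of $X$ against those of the four model surfaces.

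The genuinely substantive input is \Tr{acc ends}: the only hard possibility to rule out is that $\OO(X)$ is some complicated totally disconnected compactum other than a finite set or a Cantor set, and this is exactly what the Hopf-style dichotomy forbids. The remaining steps, namely checking the hypotheses of \Tr{acc ends} and pairing each of the four end-spaces with its model surface, are routine; the only mild care required is the observation that $X$ sits as an \emph{open} subset of $|X|$, so that \Cr{cor FX} realizes $\OO(X)$ as the removed closed set and the complement is literally one of the four canonical surfaces.
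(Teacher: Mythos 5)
Your proposal is correct and takes essentially the same route as the paper: the substantive step is \Tr{acc ends}, which (since by hypothesis $\OO(X)=\OO^\Lambda(X)$) forces the end space to have at most two points or be a Cantor set. The paper then concludes in one line by citing \Tr{Richards}, whereas you pass through \Cr{cor FX} to realize $X$ concretely as $\BS^2$ minus a closed copy of $\OO(X)$ --- an equivalent finish (and you note the direct appeal to \Tr{Richards} as an alternative yourself).
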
 
\begin{proof} 	
By \Lr{acc ends}, either $\OO(X)$ contains at most 2 ends, or it is homeomorphic to the Cantor set. By \Tr{Richards}, $X$ is homeomorphic to the sphere, the plane, the open annulus, or the Cantor sphere, if it has 0, 1, 2, or a Cantor space of ends, respectively.
\end{proof}

{\bf Remark}: The assumption that all ends of $X$ are accumulation ends does not imply that the action is co-compact, or that $|\OO(X)|=|\OO(\Gamma)|$; consider for example the action of $\Z$ on $\R^2$ by addition in one coordinate. 

This example is also relevant in \Tr{ext pd i} from the introduction, which we restate here for convenience:

\begin{theorem}\label{ext pd} 
Let $\Gamma \act X$ be a \pd\ action on a metrizable, arc-connected, locally compact space $X$. Then the canonical extension $\Gamma \act (X \cup \OO^V(X))$ of the action to the non-accumulation ends of $X$ is \pd. 
\end{theorem}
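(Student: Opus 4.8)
The plan is to realise $X':=X\cup\OO^V(X)$ as an \emph{open} subspace of the \FC\ $|X|$ and exploit this. Since $\OO^\Lambda(X)$ is the limit set of $\Gamma\act|X|$, it is closed and $\Gamma$-invariant, so $X'=|X|\sm\OO^\Lambda(X)$ is an open, $\Gamma$-invariant subspace of the compact metric space $|X|$; in particular $X'$ is locally compact and metrizable, and $\Gamma$ acts on it by homeomorphisms (every $g$ extends to a homeomorphism of $|X|$ and permutes the accumulation ends among themselves). I would then verify proper discontinuity through the local criterion~\ref{pd iii}: for all $x,y\in X'$ I must produce open neighbourhoods $U_x\ni x$, $U_y\ni y$ in $X'$ with $U_x\cap gU_y\neq\emptyset$ for only finitely many $g$. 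I split into three cases according to how many of $x,y$ are ends.

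If $x,y\in X$, the neighbourhoods supplied by criterion~\ref{pd iii} for the given action $\Gamma\act X$ work verbatim, as $X$ is open in $X'$ and the relevant intersections lie in $X$. If $x\in X$ and $y=\oo\in\OO^V(X)$ (and symmetrically), I would pick a relatively compact $U_x$ with $Q:=\cls{U_x}$ compact in $X$; since $\oo$ is a \emph{non}-accumulation end, \Prr{def acc end} gives $\oo\notin\cls{\Gamma Q}$, so there is an open $W\ni\oo$ in $|X|$ with $W\cap\Gamma Q=\emptyset$. Choosing $U_\oo\subseteq W$ to be a basic end-neighbourhood then forces $U_x\cap gU_\oo\subseteq Q\cap gW=\emptyset$ for \emph{every} $g$, because $gW\cap\Gamma Q=\emptyset$ and $Q\subseteq\Gamma Q$; so this case needs zero intersections.

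The substantive case is $x=\oo$, $y=\chi$ both in $\OO^V(X)$. Fix a non-empty connected compact $K_0$ (connecting it up with arcs if necessary). By non-accumulation of $\oo$ choose an open $W_\oo\ni\oo$ with $W_\oo\cap\Gamma K_0=\emptyset$, and then a \emph{connected} basic end-neighbourhood $U_\oo\subseteq W_\oo$: take the component $C_\oo$ of $X\sm K_\oo$ carrying $\oo$ (together with its non-accumulation ends) for a large enough compact $K_\oo\supseteq K_0$, so that $\cls{U_\oo}\subseteq W_\oo$ and the frontier of $U_\oo$ lies in $K_\oo$. Applying non-accumulation of $\chi$ to the compact set $K_\oo$, choose a connected basic end-neighbourhood $U_\chi$ with $U_\chi\cap\Gamma K_\oo=\emptyset$. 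Now suppose, for a contradiction, that $U_\oo\cap g_nU_\chi\neq\emptyset$ for infinitely many distinct $g_n$. Then $U_\chi$ is connected, meets the open set $g_n^{-1}U_\oo$, and misses its frontier (which lies in $g_n^{-1}K_\oo\subseteq\Gamma K_\oo$, disjoint from $U_\chi$), so $U_\chi\subseteq g_n^{-1}U_\oo$, i.e.\ $g_nU_\chi\subseteq U_\oo\subseteq W_\oo$. Picking any frontier point $b$ of $U_\chi$ — which lies in the compact set defining $U_\chi$, hence in $X$ — we get $g_nb\in\cls{U_\oo}\subseteq W_\oo$ for infinitely many distinct $g_n$ (distinct because point stabilisers are finite under the \pd\ action $\Gamma\act X$). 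Thus the orbit $\Gamma b$ accumulates, in the compact space $|X|$, at some $r\in\cls{U_\oo}\subseteq W_\oo$. By proper discontinuity of $\Gamma\act X$ no orbit accumulates inside $X$, so $r$ is an end; being an accumulation point of $\Gamma b$ it is by definition an \emph{accumulation} end, whence $r\in\cls{\Gamma K_0}$ by \Prr{def acc end}. But $r$ lies in the open set $W_\oo$, which is disjoint from $\Gamma K_0$, so $r\notin\cls{\Gamma K_0}$ — a contradiction. Hence only finitely many $g$ satisfy $U_\oo\cap gU_\chi\neq\emptyset$, which completes the last case and the proof.

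I expect the end–end case to be the main obstacle, and within it the delicate point is the bookkeeping with the two nested compacta $K_0\subseteq K_\oo$: $K_\oo$ must be large enough to trap $U_\oo$ inside the $\Gamma K_0$-avoiding neighbourhood $W_\oo$, while the complementary non-accumulation hypothesis for $\chi$ — now applied to $K_\oo$ rather than $K_0$ — keeps $U_\chi$ off every frontier $gK_\oo$ so that the connectedness step $U_\chi\subseteq g_n^{-1}U_\oo$ goes through. The punchline that resolves it is that the extracted accumulation point $r$ is \emph{forced} to be an accumulation end (by the very definition), yet its location in an open set avoiding $\Gamma K_0$ is incompatible with \Prr{def acc end}. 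Both the connectedness of basic end-neighbourhoods and the fact that their frontiers lie in $K_\oo$ require some care, and the whole argument leans on arc-connectedness of $X$ (also needed for \Prr{def acc end}) and on local compactness to make $X'$ locally compact.
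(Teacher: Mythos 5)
Your proof is correct and follows essentially the same route as the paper's: both verify criterion \ref{pd iii} using connected, separator-bounded neighbourhoods, force the nesting $g_nU_\chi \subseteq U_\oo$ from connectedness plus disjointness from the translated compacta, and then contradict \Prr{def acc end} by exhibiting an accumulation end inside a neighbourhood avoiding $\Gamma K_0$. The only cosmetic differences are that you argue the nesting step directly from connectedness where the paper invokes its \Lr{nested} on disjoint connected separators, and that you treat the point/point and point/end cases separately where the paper handles all cases uniformly.
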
 

For the proof of this we need to introduce the following notions and a lemma.

We say that a subspace $K\subset X$ is a \defi{separator} of a topological space $X$, if $X\sm K$ can be written as the union $U\cup U^c$ of two disjoint, non-empty, open subspaces. Note that $U,U^c$ are also closed in $X\sm K$ in this case. In this case, we say that $U,U^c$ are \defi{sides} of $K$. We remark that $K$ does not uniquely determine its sides if  $X\sm K$ has more than two components. 
The following lemma is a variant of \cite[Theorem 1, Chapter V,\S 46, VIII]{Kuratowski}\footnote{I would like to thank Max Pitz for this reference.}. We provide a proof for convenience.

\begin{lemma}\label{nested}
Let $X$ be an arc-connected, metrizable topological space, and\\ $K_Y,K_Z \subset X$ two disjoint closed and connected separators of $X$ with sides $Y,Y^c$ and $Z,Z^c$, respectively. Then at least one of the sets $Y\cap Z, Y \cap Z^c, Y^c\cap Z, Y^c \cap Z^c$ is empty.
\end{lemma}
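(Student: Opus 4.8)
The plan is to first use the connectedness of each separator to pin it down on a single side of the other, and then rule out the existence of all four quadrants by an arc-crossing argument.

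First I would record that, since $K_Y$ is closed, the decomposition $X \setminus K_Y = Y \cup Y^c$ exhibits $Y$ and $Y^c$ as disjoint \emph{open} subsets of $X$ (and similarly for $Z, Z^c$). As $K_Z$ is connected and disjoint from $K_Y$, it is contained in $X \setminus K_Y$, and being connected it must lie entirely in $Y$ or entirely in $Y^c$; after possibly interchanging the names of the two sides of $K_Y$, I may assume $K_Z \subseteq Y$. Symmetrically, $K_Y$ is connected and disjoint from $K_Z$, so it lies in one side of $K_Z$, and after interchanging $Z \leftrightarrow Z^c$ if necessary I may assume $K_Y \subseteq Z$. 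Since each such relabelling merely permutes the four sets $Y\cap Z,\, Y\cap Z^c,\, Y^c\cap Z,\, Y^c\cap Z^c$ among themselves, it suffices to prove that the one remaining ``opposite'' quadrant $Y^c \cap Z^c$ is empty.

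For this I would argue by contradiction. Assume there is a point $p \in Y^c \cap Z^c$, and fix any $k \in K_Z$. By arc-connectedness choose an arc $A$ from $p$ to $k$, and let $k_1$ be the first point of $A$ lying in $K_Z$; this exists because $A^{-1}(K_Z)$ is closed and nonempty. Writing $A_1$ for the sub-arc from $p$ to $k_1$, the set $A_1 \setminus \{k_1\}$ is connected, avoids $K_Z$, and contains $p \in Z^c$, hence is contained in $Z^c$. On the other hand $p \in Y^c$ while $k_1 \in K_Z \subseteq Y$, so the connected set $A_1$ meets both of the disjoint open sets $Y, Y^c$ and therefore must contain a point $m \in K_Y$. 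Since $K_Y \cap K_Z = \emptyset$ we have $m \neq k_1$, so $m \in A_1 \setminus \{k_1\} \subseteq Z^c$; but $m \in K_Y \subseteq Z$, contradicting $Z \cap Z^c = \emptyset$. This contradiction yields $Y^c \cap Z^c = \emptyset$.

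The conceptual heart is the first reduction: the hypothesis that the separators are \emph{connected} is exactly what forces each of them onto a single side of the other, which is what makes nestedness possible at all (two disconnected separators could genuinely interleave). I expect the only delicate points to be bookkeeping and precision rather than depth: making sure the two independent sign choices really do collapse the four symmetric cases into the single claim $Y^c \cap Z^c = \emptyset$, and selecting the \emph{first} crossing point $k_1$ carefully so that the forced crossing of $K_Y$ is guaranteed to lie in $Z^c$ and not merely at the endpoint $k_1$.
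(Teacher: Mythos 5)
Your proof is correct and follows essentially the same route as the paper: use connectedness of each separator to place it on a single side of the other, then derive a contradiction from an arc by locating its first crossing point. The only (harmless) variation is that you run the arc from $p\in Y^c\cap Z^c$ to a point of $K_Z$ rather than to a point of $Y\cap Z$, which lets you conclude $Y^c\cap Z^c=\emptyset$ outright instead of assuming all four quadrants non-empty for contradiction.
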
 
\begin{proof} 	
Since $K_Y,K_Z$ are connected, and  disjoint, each of them must be contained in a side of the other. We may assume \obda\ that $K_Z \subseteq Y$ and $K_Y \subseteq Z$ as the two sides of each separator are interchangeable. 

Suppose, for a contradiction, that all our sets $Y\cap Z, Y \cap Z^c, Y^c\cap Z, Y^c \cap Z^c$ are non-empty, and pick two points $p\in Y^c \cap Z^c, q\in Y\cap Z$. Let $A$ be a \arc{p}{q}\ in $X$, and let $x$ be the first point of $A$ in $K_Y \cup K_Z$, which exists since $K_Y \cup K_Z$ is closed. If $x\in K_Y$, then the sub-arc of $A$ from $p$ to $x$ is a \arc{Z^c}{Z} in $X \sm K_Z$. Otherwise, we have $x\in K_Z$, and the sub-arc of $A$ from $p$ to $x$ is a \arc{Y^c}{Y} in $X \sm K_Y$. In both cases we obtain a contradiction as we have split an arc as a union of two disjoint open sets.
\end{proof}

\begin{proof}[Proof of \Tr{ext pd}]
Consider the subspace $X':= X \cup \OO^V(X)$ of $|X|$. To show that 
  the canonical extension $\Gamma \act X'$ of $\Gamma \act X$ is \pd, we will show that \fe\ $x,y\in X'$ there are open neighbourhoods $O_x\ni x , O_y\ni y$ \st\ $\{ g\in \Gamma \mid gU_y \cap U_x \neq \emptyset\}$ is finite and apply definition \ref{pd iii} of a \pd\ action. 
  
We claim that there is a sequence \seq{K^x} of compact, connected separators of $X'$ with corresponding sides $(U_n, U^c_n)$ \st\ \seq{U}\ is a base of open neighbourhoods of $x$ (which we will later plug into \Lr{nested}). Indeed, if $x\in \OO^V(X)$ we can let \seq{U} consist of basic open neighbourhoods of $x$, and let $K^x_n$ their associated compact sets. We can assume each $K^x_n$ is connected because otherwise we can join its finitely many components with arcs of $X$. If $x\in X$, then \ti\ a local base \seq{C}\ of compact neighbourhoods of $x$ since $X$ is locally compact. Let $\{\partial C_n\}$ be their frontiers, let  $K^x_n$ be a compact connected set containing $\partial C_n$, and let $U_n:= C_n \sm K^x_n$. Then $U_n$ is open in $X' \sm K^x_n$ as desired, and so is $U^c_n:= X' \sm (K^x_n \cup U_n)$. Replacing $x$ by $y$ throughout, we analogously define $\seq{K^y}$ and the sides $\{V_n,V^c_n\}$.
 
 As $\Gamma \act X$ is \pd, the set $\{ g\in \Gamma \mid g K^x_i \cap K^y_j \neq \emptyset\}$ is finite \fe\ $i,j\in \N$. Thus we can apply \Lr{nested} to the separators  $g K^x_i , K^y_j$ of $X'$ \fe\ $i,j\in \N$ and all but finitely many $g\in \Gamma$. If the transporter set $(U_i, V_j)$ is finite for some $i,j$, then we are done by \ref{pd iii}. If not, then either $|(U_i \mid K^y)|=\infty$ for every $i$, or $|(V_i \mid K^x)|=\infty$ for every $i$. If the former is the case, then $x\in \lim K^y$, and if the latter is the case, then $y\in \lim K^x$. This leads to a contradiction as $X'$ contains no accumulation end, and no point of $X$ is an accumulation point of an orbit under $\Gamma \act X$ (here we tacitly used  \Prr{def acc end}).

\end{proof}

\section{Relationship to planar  groups and planar discontinuous groups} \label{sec rels}


Droms et.\ al.\ \cite{DrSeSeCon} provided an example of a \Cg\ $G$ 
which is planar but admits no embedding in $\BS^2$ for which the natural action on \g by its group is realized by homeomorphisms of $\BS^2$. Here, we provide such an example with the stronger property that its group does not admit any faithful action on $\BS^2$ (or any 2-manifold $X\subseteq \BS^2$) by homeomorphisms.
\medskip

For this, we will find finite groups $R,P$ of homeomorphism of $\BS^2$, containing involutions $b_r,b_p$ respectively, \st\ in any faithful action $R \act \BS^2$ the homeomorphism $b_r$ reverses the orientation of $\BS^2$, and in any faithful action $P \act \BS^2$ the homeomorphism $b_p$ preserves the orientation. In addition, we will display planar \Cg s $G_R,G_P$ of the two groups in which $b_r$ and $b_p$ are contained in the generating set, and thus appear as edges. By amalgamating any two such groups $R,P$ along the 2-element subgroups spanned by $b_r,b_p$ respectively, we obtain a group with a planar \Cg\ $G$, which can be obtained by embedding infinitely many copies of $G_R$ and $G_P$ glued along their amalgamated edges corresponding to $b_r,b_p$.
\medskip

We can choose $R$ to be the alternating group $A_4$. This group has presentations $\left< r,g \mid r^3,g^3,(rg)^2 \right>$ and 
$\left< k,r | k^2, r^3, (kr)^3\right>$, where $r=(1,2,3), g=(2,3,4), k=(1,3)(2,4)$ in permutation cycle notation. The first presentation shows that $R$ is generated by elements of odd order. Easily, every element of odd order preserves the orientation in any action $R \act \BS^2$. These two observations combined yield that every element of $R$  preserves the orientation in any action $R \act \BS^2$. 

The second presentation gives rise to a planar \Cg\ of $R$, namely the truncated tetrahedron\footnote{See {\url{http://weddslist.com/groups/cayley-plat/index.html}} for a figure.}, and its generator $k$ is an involution. We can thus let $G_R$ be this \Cg\ and $b_r=k$.

\epsfxsize=0.4\hsize
\showFig{prism}{The standard \Cg\ of $\Z_4 \times \Z_2$.}

We can choose $P$ to be the group $\Z_4 \times \Z_2$. We claim that every planar  \Cg\ $G= Cay(P,S)$ of this group has a subgraph $G' = Cay(P,S')$ with $S'\subseteq S$ isomorphic to the \defi{prism} depicted in \fig{prism}. To see this, note that $S$ must contain the element $(1,0)$ or its inverse $(3,0)$ because these elements are not contained in the span of the remaining elements of $P$ by a parity argument. If $S$ also contains any of the elements $(1,1)$ or $(3,1)$, then $G$ contains the graph of \fig{K44} as a subgraph. But that graph is not planar, because it is isomorphic to the complete bipartite graph $K_{4,4}$, which contains the Kuratowski graph $K_{3,3}$ as a subgraph. Thus $S$ also contains one the elements $(0,1)$ or $(2,1)$.
It is easy to check that any  choice $S'$ of two generators as above yields a \Cg\  $G' \subseteq G$ isomorphic to the graph of \fig{prism}. (In fact, it is not hard to show that $G = G'$, but we will not need this.) 

\epsfxsize=0.4\hsize
\showFig{K44}{A non-planar \Cg\ of $\Z_4 \times \Z_2$. This graph is the complete bipartite graph $K_{4,4}$: the vertices at the same horizontal/vertical level form the two partition classes.}

Applying \Lr{lem vapf} to any action $P\act \BS^2$, we obtain a planar  \Cg\ $H$ of $P$ \st\ the canonical action $P \act H$ extends to $P\act \BS^2$. By the above claim, $H$ contains the \Cg\ $G'$ of \fig{prism} as a subgraph. Note that $P \act G'$ extends to $P\act H$, and from there to $P \act \BS^2$. Recall that we would like to choose an involution $b_P\in P$ that reverses  orientation in any action $R \act \BS^2$. Note that $P$ contains exactly 3 involutions, namely $(2,0), (0,1), (2,1)$. Of these involutions, only the first one is contained in a subgroup of $P$ isomorphic to $\Z^4$ ($P$ has exactly two such subgroups). Let $b_P=(0,1)$. Then no matter which generating set of $P$ gives rise to the \Cg\ $G'$, our involution $b_p$ will correspond to a vertex of $G'$ that is not in the same monochromatic 4-cycle as the identity. But then the action of $b_p$ on $G'$ reverses the cyclic order of the edges incident with any vertex. Since $P \act G'$ extends to $P \act \BS^2$, this means that $b_p$ reverses the orientation of $\BS^2$ in $P \act \BS^2$, which was an arbitrary action of $P$ on $\BS^2$. Finaly we can just let $G_P:= G'$ be our choise of \Cg\ of $P$.
\medskip

Let now $\Gamma := R \star_{b_r=b_p} P$ be the amalgamation product of $R$ and $P$ over the subgroups spanned by $b_r, b_p$, respectively. Then $\Gamma$ cannot act faithfully on $\BS^2$, because by the above discussion its element $b_r=b_p$ would have to both preserve and reverse the orientation in any such action. Moreover, we can obtain a planar \Cg\ of $\Gamma$ by recursively glueing copies of $G_R$ and $G_P$ along their edges corresponding to $b_r,b_p$. This proves in particular

\begin{proposition}\label{subfam}
The groups satisfying the conditions of \Tr{tfae} form a proper subfamily of the groups admitting finitely generated \Cg s.
\end{proposition}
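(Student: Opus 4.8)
The plan is to establish two things: that every group satisfying \Tr{tfae} admits a finitely generated planar \Cg\ (so these groups form a subfamily of the planar groups), and that the group $\Gamma = R \star_{b_r=b_p} P$ constructed above is planar yet fails the conditions of \Tr{tfae} (so the inclusion is proper).

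For the containment, I would start from an arbitrary group $\Gamma$ satisfying \ref{T i}, so that $\Gamma$ acts faithfully and \pd ly on a planar surface. Since such a surface is by definition homeomorphic to a subspace of $\BS^2$, we may assume it is a $2$-manifold $X \subseteq \BS^2$. Then \Lr{lem vapf} yields a finitely generated \Cg\ of $\Gamma$ together with a \cova, hence planar, embedding into $\BS^2$. Thus every group of \Tr{tfae} admits a finitely generated planar \Cg.

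For properness I would invoke the group $\Gamma = R \star_{b_r=b_p} P$ built above, which we already equipped with a finitely generated planar \Cg\ obtained by recursively glueing copies of $G_R$ and $G_P$ along the edges labelled $b_r$ and $b_p$. It remains only to check that $\Gamma$ is not a group of \Tr{tfae}. Suppose it were; then by \ref{T i} it acts faithfully and \pd ly on some connected $2$-manifold $X\subseteq \BS^2$, which is in particular orientable. The subgroups $R,P\le \Gamma$ inherit faithful \pd\ actions on $X$, since both faithfulness and proper discontinuity pass to subgroups (a subset of a finite transporter set is finite). As $X$ is connected and orientable, every homeomorphism of $X$ either preserves or reverses orientation. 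The two orientation computations of this section now apply to $X$ in place of $\BS^2$: since $R=A_4$ is generated by elements of odd order, each of which preserves orientation (an odd-order homeomorphism of a connected orientable surface cannot reverse orientation, as otherwise its identity power, a product of an odd number of orientation-reversing maps, would reverse orientation), the element $b_r$ preserves orientation; while applying \Lr{lem vapf} to the faithful \pd\ action $P\act X$ and inspecting the resulting prism subgraph $G'$ shows that $b_p$ reverses the cyclic ordering of the edges at a vertex, hence reverses the orientation of $X$. Since $b_r=b_p$ is a single element of $\Gamma$, its induced homeomorphism of $X$ cannot both preserve and reverse orientation, a contradiction. Therefore $\Gamma$ is planar but does not satisfy \Tr{tfae}, and the inclusion is proper.

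The main obstacle I anticipate lies entirely in the last paragraph: one must verify that the orientation arguments, phrased for $\BS^2$ in the construction, go through verbatim on an arbitrary connected planar surface $X\subseteq \BS^2$, and in particular that applying \Lr{lem vapf} to the restricted action $P\act X$ reproduces the same combinatorial prism $G'$ whose relevant automorphism $b_p$ reverses the local cyclic orders. Everything else — the containment via \Lr{lem vapf}, the orientability of $X$, and the inheritance of faithfulness and proper discontinuity by the subgroups $R$ and $P$ — is routine.
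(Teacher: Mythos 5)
Your proposal is correct and follows essentially the same route as the paper: the containment comes from condition \ref{T ii} of \Tr{tfae} (equivalently, via \Lr{lem vapf}), and properness comes from the amalgam $R \star_{b_r=b_p} P$ together with the orientation dichotomy for $b_r$ and $b_p$. Your explicit check that the two orientation computations transfer from $\BS^2$ to an arbitrary connected $2$-manifold $X\subseteq\BS^2$ (orientability of $X$, restriction of the action to the finite factors, and the odd-order argument) is a point the paper treats only implicitly, but it is exactly the intended argument.
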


Next, we will show that the groups satisfying the conditions of \Tr{tfae} form a proper superfamily of the finitely generated planar discontinuous groups, i.e.\ the finitely generated groups acting faithfully and \pd ly on $\R^2$. For the expert reader this may be a straightforward consequence of \Cr{Kleinian}.

The example we will consider is $\Gamma:= \Z \times \Z_3$. The standard \Cg\ of $\Gamma$ is easily seen to be planar and 3-connected. Thus $\Gamma$ satisfies condition \ref{T ii} of \Tr{tfae}. It acts faithfully and \pd ly on the open annulus but not, as we will now show, on $\R^2$.

Suppose, for a contradiction, that $\Gamma\act \BS^2$ is such an action, and apply \Lr{lem vapf} to obtain a planar  \Cg\ $G$ of $\Gamma$ and an embedding $\sigma : G \to \R^2$ \st\ $\sigma(V(G))$ has no accumulation points in $\R^2$, and the canonical action $\Gamma \act G$ extends to $\Gamma \act \BS^2$.

Our first claim is that there is a cycle $C$  of $G$ fixed by the action of the subgroup $\Delta$ corresponding second factor $\Z_3$ in the definition of $\Gamma$. To see this, let $e,g,g^2$ denote the elements of $\Delta$, and let $A$ be an \pth{e}{g}\ in \G. Let $X:= A \cup gA \cup g^2 A$ be the subgraph of \g comprising its $\Delta$-translates. By Babai's \Lr{babai} applied to the (free) action of $\Delta$ on $X$, there is a contraction $X/F$ of $X$, with $F\subseteq X$ connected, with $X/F$ being a  \Cg\ of $\Delta$. The only  \Cg\ of the group $\Delta$ with 3 elements is the triangle. Let $B$ be a path in $F$ joining the endvertices of the two edges of $X/F$ incident with $F$. Then the 3 edges of $X/F$ combined with the 3 $\Delta$-translates of $B$ form the desired cycle $C$.

Next, we prove that there is a further cycle $D$  of $G$ fixed by the action of  $\Delta$ \st\ the closures of the insides of $\sigma(C)$ and $\sigma(D)$ are disjoint. For this, note that there are infinitely many translates $hC, h\in \Gamma$ of $C$, and only finitely many of them intersect $C$ by local finiteness. If $C, D=hC$ are disjoint, and any of them is embedded inside the other by $\sigma$, then by iterating $h$ or $h^{-1}$ we can obtain an infinite set of translates embedded inside $C$ because $\Gamma \act G$ extends to $\Gamma \act \BS^2$. But this would contradict the fact that $\sigma(V(G))$ has no accumulation points in $\R^2$, and so our claim is proved.

Let $P$ be a path from $C$ to $D$ in $G$ \st\ the interior of $P$ avoids $C \cup D$, which exists since $G$ is connected. If the three translates $P, gP, g^2P$ of $P$ via $\Delta$ are pairwise disjoint, then combined with $C$ and $D$ they decompose $\R^2$ into 6 domains (\fig{6regions}), exactly one of which is non-compact and delimited by two of $P, gP, g^2P$ and a subpath of each of $C,D$ (here, we are tacitly interpreting the aforementioned subgraphs of \g as subspaces of $\BS^2$ by using our embedding $\sigma$). But the three domains of this form are permuted by the action of $\Delta$, leading to a contradiction as no homeomorphism can map a compact set onto a non-compact set. 

\epsfxsize=0.6\hsize

\showFig{6regions}{The situation when $P, gP, g^2P$ (displayed in green, if colour is shown) are pairwise disjoint. We show that this situation can be achieved by choosing $P$ appropriately.}

Thus it remains to show that we can choose $P$ above so that its translates $P, gP, g^2P$ are pairwise disjoint. To see this, assume they are not, and notice that the subgraph $H:= P\cup gP \cup g^2 P$ of \g is connected in this case. Since $\Delta \subset \Gamma$ acts freely on \G, it does so on $H$. By Babai's \Lr{babai} again, there is a contraction $H/F$ of $F$, with $F\subseteq H$ connected, which is a \Cg\ of $\Delta$, and again $H/F$ can only be a triangle. Notice that $F$ must contain exactly one of the six endpoints of the three paths $P, gP, g^2P$ incident with $C$, and exactly one  incident with $D$ by the definitions. Let $x,y$ be those endpoints, and let $P'$ be a \pth{x}{y}\ contained in $F$. Then $P'$ is a path from $C$ to $D$ in $G$, and its translates $gP',g^2 P'$ are contained in the translates $gF,g^2F$ of $F$, and are hence disjoint to each other and to $P'$ by the choice of $F$. This completes our proof.


\comment{
\begin{proposition}\label{}
$D:= D_3 \star_{\Z_2} (\Z_3 \times \Z_2)$ does not admit any faithful action on $\BS^2$ by homeomorphisms.
\end{proposition} 
\begin{proof} 	
Suppose there is such an action $D \act \BS^2$. By restricting this action we obtain actions by its finite subgroups  $D_3 \act \BS^2$ and $\Z_3 \times \Z_2\act \BS^2$. Let $b$ be the non-trivial element of the subgroup $D_3  \cap (\Z_3 \times \Z_2)$ of $D$ (which is isomorphic to $\Z_2$). We claim that in any action $D_3 \act \BS^2$, the homeomorphism $b$ is orientation-preserving, while in any action $\Z_3 \times \Z_2 \act \BS^2$ it is orientation-reversing. This leads to a contradiction since $b$ can only do one of the two in $D \act \BS^2$.

Since all finite actions are \pd, we can apply  \Lr{lem vapf} to the two above actions $D_3 \act \BS^2$ and $\Z_3 \times \Z_2\act \BS^2$. This yields a planar \Cg\ $G$ of $D_3$ \st\ $D_3 \act G$ extends to $D_3 \act \BS^2$ and a planar \Cg\ $H$ of $\Z_3 \times \Z_2$ \st\ $\Z_3 \times \Z_2 \act H$ extends to $\Z_3 \times \Z_2 \act \BS^2$.

There are only 2 candidates for $G$ up to isomorphism: it is straightforward to check that $D_3$ has, up to inverses, exactly one non-involution, and such an element is needed in order to generate $G$. Moreover, if we admit all 3 involutions of $D_3$ as generators of $G$, then $G$ will contain the Kuratowski graph $K_{3,3}$, contradicting its planarity. Thus $G$ is generated by one non-involution and 1 or two involutions, and is hence isomorphic to the prism $P$ of order 6, or $P$ with three diagonals added. Both these graphs are 3-connected, and so the embedding of $G$ in $ \BS^2$ is essentially unique. Moreover, it is easy to check that each involution in $D_3$ preserves orientation when its action on $G$ is extended to $ \BS^2$. To summarise, we have proved that 

\end{proof}
}

\medskip
We finish this section with an example of a finitely generated Kleinian group that is not one of the groups of \Tr{tfae}; even more, it does not admit any planar \Cg. In fact most Kleinian groups have this property, but we present the following explicit example ---for which I thank B.~Bowditch (private communication)--- for the non-expert reader.

\begin{proposition}\label{ex klein}
There is a finitely generated Kleinian group that does not admit a planar \Cg.
\end{proposition}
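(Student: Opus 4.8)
The plan is to take for $\Gamma$ the fundamental group of a \emph{closed} hyperbolic $3$-manifold $M$ (for instance the Weeks manifold, or any arithmetic example; such manifolds exist), and to play its three-dimensionality against the essentially two-dimensional nature of groups with planar Cayley graphs. First I would record the standard facts that $\Gamma$ is a cocompact, torsion-free lattice in $PSL(2,\mathbb C)$, hence a finitely generated (indeed finitely presented) Kleinian group, and that $M$ is aspherical, so that $\Gamma$ is a $\mathrm{PD}(3)$-group; in particular $\Gamma$ has cohomological dimension $\mathrm{cd}(\Gamma)=3$ and exactly one end. Here it is essential that $M$ be closed: a cusped finite-volume example such as the figure-eight knot complement is homotopy equivalent to a $2$-complex and would only give $\mathrm{cd}=2$, which is not enough. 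Note also that $\Gamma$ contains no $\mathbb Z^3$ --- in fact no Kleinian group does, since a discrete parabolic subgroup of $PSL(2,\mathbb C)$ has rank at most $2$ --- so the naive obstruction ``contains $\mathbb Z^3$'' is unavailable, and the argument must be cohomological rather than combinatorial.

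The heart of the proof is the contrapositive assertion that a torsion-free finitely generated group with a planar Cayley graph has cohomological dimension at most $2$. So suppose, for a contradiction, that $\Gamma$ admits a planar Cayley graph. Since $\Gamma$ is one-ended it is freely indecomposable, and since it is torsion-free it splits over no finite subgroup; by Droms \cite{DroInf} it is finitely presented and hence accessible by Dunwoody \cite{DunAcc}, so its accessibility decomposition is trivial and $\Gamma$ is itself a one-ended, torsion-free, finitely generated planar group. I would then invoke the structure theory of planar groups (Dunwoody \cite{DunPla}, building on Levinson--Maskit \cite{LevMasSpe}) to the effect that such a group is a surface group, and therefore acts properly discontinuously by isometries on the Euclidean or hyperbolic plane. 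In particular it is two-dimensional, $\mathrm{cd}(\Gamma)\le 2$, contradicting $\mathrm{cd}(\Gamma)=3$. This contradiction shows that $\Gamma$ admits no planar Cayley graph, proving \Prr{ex klein}.

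The main obstacle is precisely this last structural input: the reduction of a one-ended torsion-free planar group to a surface group, equivalently the bound $\mathrm{cd}\le 2$ for torsion-free planar groups; everything else is routine bookkeeping about $\mathrm{PD}(3)$-groups and free-product decompositions. If one prefers to avoid quoting the full classification, an alternative route is to argue directly that any one-ended planar group is \emph{virtually} two-dimensional (via the accessibility decomposition of \cite{dicks_dunw} together with the planarity of the one-ended factors) and then to use that a $\mathrm{PD}(3)$-group can be commensurable to no $\mathrm{PD}(2)$-group. Either way the crux is the same two-versus-three-dimensional dichotomy. The payoff is clean and slightly stronger than needed: because $\mathrm{cd}(\Gamma)=3$ while every group of \Tr{tfae} acts properly discontinuously on a planar surface and is thus two-dimensional, the same $\Gamma$ is \emph{a fortiori} not one of the groups of \Tr{tfae}, illustrating that one cannot replace ``Kleinian function group'' by ``Kleinian group'' in \Cr{Kleinian}.
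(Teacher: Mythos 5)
Your proposal is correct, but it reaches the conclusion by a genuinely different route from the paper's, so a comparison is worthwhile. The paper takes $H=\pi_1(D)$, where $D$ is four copies of a compact surface-with-boundary glued along their common boundary circle; $H$ is realised as a Kleinian group via a result of Bowditch and Mess, and the contradiction is that this amalgam of free groups over $\Z$ is not a surface group (argued by a quasi-isometry remark). You instead take the fundamental group of a closed hyperbolic $3$-manifold and derive the contradiction from $\mathrm{cd}(\Gamma)=3$. Both arguments funnel through the identical structural crux: a one-ended, torsion-free, finitely generated group with a planar \Cg\ must be a surface group. The paper obtains this from its own machinery --- one-ended Cayley graphs are $3$-connected, Whitney's \Tr{imrcb} makes the embedding covariant, and \Tr{tfae}~\ref{T iii} then exhibits the group as a torsion-free cocompact planar discontinuous group, hence a surface group --- whereas you quote the structure theory of \cite{DunPla,LevMasSpe} as a black box; the content is the same, and your preliminary reduction via accessibility is redundant once one-endedness is in hand. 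What your version buys is a more standard example and a cleaner, more robust obstruction: $\mathrm{cd}=3$ versus $\mathrm{cd}\le 2$ avoids the quasi-isometry ``exercise'' at the end of the paper's proof and immediately also excludes $\Gamma$ from the class of \Tr{tfae}, since a torsion-free group acting freely and \pd ly on an aspherical planar surface has $\mathrm{cd}\le 2$. What the paper's version buys is an explicitly visualisable Cayley graph and a realisation as a Kleinian group resting on a single cited proposition rather than on the existence of closed hyperbolic $3$-manifolds. One small imprecision in your write-up: ``since it is torsion-free it splits over no finite subgroup'' is not by itself correct, as free splittings are splittings over the trivial subgroup; it is one-endedness, via Stallings, that rules out all splittings over finite subgroups, as your preceding sentence in effect already says.
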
 
\begin{proof}[Proof (sketch)]
Let $S$ be a closed orientable surface of genus at least 1, and let $S'$ be obtained from $S$ by removing a topological disc bounded by a (contractible) simple closed curve $C$. Let $D$ be the space obtained by identifying four copies of $S'$ along $C$. It is shown in \cite[Proposition~5.1]{BowMes4dim} that $D$ admits a metric $d$ \st\ $(D,d)$ embeds (as a metric subspace) in a complete hyperbolic 3-manifold $W$ and $W$ retracts onto $D$. Thus $W$ and $D$ have the same fundamental group, and we let  $H:= \pi_1(D) \isom  \pi_1(W)$. It is well-known that the fundamental group of every  complete hyperbolic 3-manifold, in particular $H$, is Kleinian. 

By the Seifert--van~Kampen theorem, $H$ is the amalgamation product of four free groups of rank 2 (or two copies of the fundamental group of the double-torus) along an infinite cyclic subgroup. This remark allows us to visualise its canonical \Cg\ as a union of regular tilings of the plane, glued along certain common lines. It is not hard to deduce from this that $H$ is 1-ended, and torsion-free. 
Since 1-ended \Cg s are 3-connected \cite[Lemma~2.4]{baGro}, if $H$ admits a planar \Cg\ \G, then \g has an essentially unique embedding into $\BS^2$ by Whitney's \Tr{imrcb}. Then \ref{T iii} of \Tr{tfae} implies that $H$ is the fundamental group of a compact 2-orbifold $M$, and since $H$ is torsion-free $M$ is a manifold, i.e.\ a closed orientable surface. It is an exercise to show that $\pi_1(D)$ is not the fundamental group of a closed orientable surface, for example by noticing that the aforementioned \Cg\ is not quasi-isometric to any regular tiling of the plane.
\end{proof}

\section{Relationship to orbifold fundamental groups} \label{sec orb}

Let $\Gamma \act X$ be an action as in \Tr{tfae}~\ref{T iii}. Then the quotient space $O:= X/\Gamma$ is a good compact 2-orbifold (see e.g.\ \cite[Proposition~20]{BorRie}). It follows from the standard theory of covering spaces (\cite[Proposition~1.40]{Hatcher}) that 
\labtequ{orb pi}{$\Gamma \isom \kreis{\pi}_1(O)/ p_*(\pi_1(X))$,}
where $\kreis{\pi}_1(O)$ denotes the \defi{orbifold fundamental group} of $O$ (see e.g.\ \cite{ScoGeo}) and $p_*$ the canonical projection from $X$ to $O$. Does \eqref{orb pi}, combined with the classification of compact 2-orbifolds, provide information about the groups of  \Tr{tfae}? I do not think so. If $X= \BS^2$ or $X= \R^2$, then $\Gamma$ is one of Maschke's finite groups of isometries of the sphere, or the Fuchsian and Kleinian groups respectively; these cases are already well-understood. The most interesting case is where $X$ is the Cantor sphere $\cc$. It is not too hard to prove that $\Pi:= \pi_1(\cc)$ is the free group of countably infinite rank. Then \eqref{orb pi} tells us that if $\Gamma$ acts faithfully and \pd ly on $\cc$, then  $\Gamma\isom \kreis{\pi}_1(O)/ \Pi$ for some  good compact 2-orbifold. 

Have we learned anything about $\Gamma$? Note that every countable group $G$ can be written as a quotient $F/F'$ where both $F,F'$ are free, because defining \g via a group presentation $\left< \SF \mid \mathcal \RF \right>$ provides such an expression, with $F$ being 
the free group with generating set $\SF$ and $F'$ its smallest normal subgroup spanned by $\RF$. With easy modifications, we may always assume that both $F,F'$ have infinite rank, and are hence isomorphic to $\Pi$. 

Here is a concrete example. Let $\Gamma = \Z$, let $X$ be the open annulus, and let $\Z$ act on $X$ by a shift, so that $O= X/\Z$ is the torus $T$. Then \eqref{orb pi} says that $\Z= \pi_1(T) / \pi_1(X) = \Z^2 / \Z$. However, not every quotient of $\Z^2$ by a normal subgroup $H$ isomorphic to $\Z$ yields a group as in \Tr{tfae}: the quotient $\Z^2 / < (3,3) >$ is a finite abelian group of rank 2, which can act by isometries on the torus but not on $\BS^2$; this can be proved using the techniques of \Sr{sec rels}.

\section{Acting on $\R^3$} \label{sec R3}

\comment{
\begin{theorem}\label{act R3}
Every group $\Gamma$ as in \Tr{tfae} admits a faithful \pd\ action on $\R^3$.
\end{theorem} 
}

The aim of this section is to prove \Tr{act R3}, which states that every group $\Gamma$ as in \Tr{tfae} admits a faithful \pd\ action on $\R^3$. We remark that in the orientation-preserving case \Tr{act R3} immediately follows from \Cr{Kleinian}. The proof below follows a different method.

We will use the notion of planar presentations, and the associated almost planar Cayley complexes, from \cite{planarPresI,planarPresII}, about which we need to prove a couple of additional facts. We start by recalling some terminology.

\medskip
We say that a Cayley complex $Z$ is \emph{almost planar}, if it admits a map $\rho: Z \to \BS^2$ in which the 2-simplices of $Z$ are \emph{nested} in the following sense. We say that  two 2-simplices of $Z$ are nested, if the images of their interiors under $\rho$ are either disjoint, or one is contained in the other, or their intersection is the image of a 2-cell bounded by two parallel edges corresponding to an involution $s$ in the generating set $\SF$ defining $Z$.\footnote{The third option can be dropped by considering the \emph{modified} Cayley complex in the sense of \cite{LyndonSchupp}, i.e.\ by representing involutions in $\SF$ by single, undirected edges.} We call such a $\rho$ an \defi{almost planar map} of $Z$.

Every Cayley complex $Z$ in this section is \defi{finitely presented}, i.e.\ its defining presentation is finite.

The following is proved in \cite[Theorem~5.6]{planarPresII}:\footnote{The difference between this formulation and the one of \cite[Theorem~6.6]{} is only in the terminology used}
\begin{lemma}\label{pla pres}
Let $\g = Cay(\Gamma,\SF)$ be a finitely generated \Cg\ and $\sigma: G \to \BS^2$ a \cova\  embedding. Then \ti\ a (finitely presented) Cayley complex $Z$ of $\Gamma$ with 1-skeleton \G, and an almost planar map $\rho: Z \to \BS^2$ \st\ $\rho(G)$ coincides with $\sigma$. 
\end{lemma}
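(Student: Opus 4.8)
The plan is to realise $Z$ by choosing, up to the $\Gamma$-action, a finite set of relators whose $\sigma$-images bound pairwise nested discs, and then to define $\rho$ cell by cell. Recall that a presentation $\langle \SF \mid \RF\rangle$ yields a Cayley complex with $1$-skeleton $G$ that is simply connected precisely when the $\Gamma$-translates of the cycles spelled by the relators of $\RF$ generate the cycle space $\ccg$; this is the combinatorial content of ``$Z$ is a Cayley complex''. So the whole task reduces to producing a finite, $\Gamma$-invariant family $\mathcal F$ of cycles of $G$ that (a) generates $\ccg$, and (b) is \emph{nested} for $\sigma$, meaning that for a suitable $\Gamma$-equivariant choice of an ``inside'' disc for each member, the insides of any two members are disjoint, or one contains the other, or the two cycles form a pair of parallel involution-edges (the third option in the definition of nestedness).

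First I would secure a finite generating family, ignoring nestedness. Since $G$ is a finitely generated planar \Cg, Droms' theorem (used already in the proof of \Lr{cutspace}) shows that $\Gamma$ is finitely presented, so there is a finite set of relators whose translates generate $\ccg$; call the resulting cycles $\mathcal F_0$. By vertex-transitivity of $\Gamma\act G$ this family meets only finitely many $\Gamma$-orbits, which is the finiteness needed for a finite presentation. Independently, \Cr{cor hom S} upgrades the \cova\ embedding $\sigma$ to an action $\Gamma\act\BS^2$ by homeomorphisms permuting the faces of $\sigma$; this is what lets every construction below be carried out $\Gamma$-equivariantly.

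Next I would uncross $\mathcal F_0$ and then build $Z$. The guiding model is the family of finite face-boundaries of $\sigma$: distinct faces have disjoint interiors, so taking each such interior as its ``inside'' makes finite face-boundaries automatically nested, and involutions contribute bigons. The idea is to push $\mathcal F_0$ towards such face-type cycles: whenever two cycles of the current family cross in $\BS^2$, replace the crossing pair by the cycles into which their symmetric difference decomposes, which lie strictly ``inside'' the old ones and do not change the $\mathbb F_2$-span. Carried out $\Gamma$-equivariantly with a monotone potential (say the number of enclosed edges) this should converge to a $\Gamma$-invariant nested generating family $\mathcal F$ still meeting finitely many orbits. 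Given such an $\mathcal F$, I then let $Z$ be the Cayley complex obtained by gluing one $2$-cell along each cycle of $\mathcal F$, and define $\rho$ by sending each $2$-cell homeomorphically onto the closed disc that $\sigma$ places inside its boundary cycle. Nestedness of insides is exactly nestedness of the $2$-simplices, $\rho|_{G}=\sigma$ holds by construction, and generation of $\ccg$ makes $Z$ simply connected, hence a genuine Cayley complex of $\Gamma$.

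The main obstacle is the equivariant uncrossing together with the coherent, $\Gamma$-invariant choice of ``insides''. A cycle of $G$ bounds two discs in $\BS^2$, and when a cycle separates two ends rather than bounding a single face there is no canonical side to call its inside; moreover an \emph{infinite} face is bounded by no finite cycle, so near such a face the enclosing cycles must be produced by hand. This is precisely where I expect the accessibility of $\Gamma$ (Dunwoody), exploited as in the proofs of \Lr{cutspace} and \Lr{lem fin fac}, to be needed: it should guarantee that finitely many $\Gamma$-orbits of nested cycles already cut every infinite face into disc-bounding pieces, and hence suffice to generate $\ccg$.
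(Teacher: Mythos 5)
First, note that the paper does not prove \Lr{pla pres} at all: it is quoted verbatim from \cite[Theorem~5.6]{planarPresII}, so what you are really attempting is a from-scratch proof of the main result of that companion paper. Your high-level picture --- find a finite, $\Gamma$-invariant, nested family of cycles generating the cycle space and attach $2$-cells along it --- is indeed the right picture, but two of your steps conceal the actual content.

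The first gap is your opening reduction: it is \emph{not} true in general that attaching $2$-cells along relator walks yields a simply connected complex precisely when those walks' edge sets generate $\ccg$. Generating the cycle space is generating $H_1(G;\mathbb{F}_2)$, whereas being a Cayley complex of $\Gamma$ requires the relators to normally generate $\pi_1(G)$, a strictly stronger condition (already for $\Gamma=\Z_2\times\Z_2$ with both generators involutions one can write down relator sets witnessing the difference). That a \emph{nested} generating family in a covariantly embedded planar Cayley graph does yield a genuine presentation is a theorem --- essentially the main theorem of \cite{planarPresI,planarPresII} --- not the ``combinatorial content'' of a definition, and your proposal never proves it. The second gap is the equivariant uncrossing, which is asserted rather than carried out. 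Replacing a crossing pair $C,D$ by the cycles into which $C+D$ decomposes can strictly shrink the $\mathbb{F}_2$-span (you lose $C$ itself unless you keep it, and then crossings may persist); your potential function ``number of enclosed edges'' presupposes a choice of side, but a cycle of an infinite plane graph may have infinitely many edges on \emph{both} sides, so the potential need not be well defined, let alone well founded; and a single orbit-wise uncrossing step can create new crossings among translates, so termination is exactly the difficulty, not a routine induction. You candidly flag the remaining issues (equivariant choice of insides, cycles near infinite face boundaries, the role of accessibility) as things you ``expect'' to work --- which is an accurate diagnosis of where the hard work lies, but it means the proposal is a programme matching the cited paper's strategy rather than a proof.
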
 

We call a Cayley complex \defi{standard}, if the closure of each of its 2-cells is homeomorphic to a disc.
The Cayley complexes provided by \Lr{pla pres} are standard by their construction. 

We let  $Z^1$ denote the 1-skeleton of a Cayley complex $Z$, which is a \Cg\ by the definitions. We write $Z^2$ for the set of 2-cells of $Z$. Given $C\in Z^2$, the boundary $\partial C$ of $C$ is a subgraph of $Z^1$, and if $Z$ is standard then $\partial C$ is always a cycle. 

We call two 2-cells $B,C$ of a Cayley complex $Z$ \defi{equivalent}, if $\partial B = \partial C$. We obtain the corresponding \defi{simplified} Cayley complex by removing all but one representative from each equivalence class of 2-cells of $Z$. 

Let $Z$ be a standard, simplified Cayley complex,  
and let $\rho: Z \to \BS^2$ be an almost planar map. Then every 2-cell $C\in Z^2$ defines two \defi{sides} $C_1,C_2$, namely the two components of $\BS^2 \sm \rho(\partial C)$. Given  $B,C\in Z^2$, we say that $B$ is \defi{maximally nested} in the side $C_1$ of $C$, if $\rho(\partial B) \subset \cls{C_1}$ and there is no 2-cell $B'\neq B$ of $Z$ \st\ $\rho(\partial B')$ separates $\rho(\partial B)$ from $\rho(\partial C)$ in $\BS^2$.  

\begin{lemma}\label{fin nest}
Let $\rho: Z \to \BS^2$ be an almost planar map of a standard, simplified Cayley complex $Z$ \st\ $\rho(Z^1)$ is \cova, let  $C\in Z^2$, and let  $M(C)$ be the set of 2-cells of $Z$ maximally nested in a side $I$ of $C$. Then either  $M(C)$ is finite, or there is exactly one accumulation point of $M(C)$ in $|Z|$.
\end{lemma}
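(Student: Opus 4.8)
The plan is to assume $M(C)$ is infinite and to produce exactly one accumulation end. First I would record that accumulation points must be ends. Since $Z^1$ is a locally finite Cayley graph and $\rho(Z^1)$ is a covariant embedding, \Cr{FG emb k1} gives an embedding of $|Z|$ into $\BS^2$ agreeing with $\rho$ on $Z^1$; in particular distinct ends of $Z$ have distinct images. A small neighbourhood of a vertex, or of an interior point of an edge, meets only finitely many faces of the embedded $1$-skeleton, so no such point can be a limit of infinitely many of the pairwise-distinct circles $\rho(\partial B)$, $B\in M(C)$. Hence every accumulation point of $M(C)$ lies in $\OO(Z)\subset|Z|$, and at least one exists by compactness of $|Z|$. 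As in \Lr{pd ext} and \Lr{lem fin fac} I may assume $Z^1$ is $2$-connected, replacing it by the $3$-connected supergraph of \Lr{ext 3cd} if necessary (this does not change the end space and carries the covariant embedding along), so that \Lr{scc} and \Lr{face sep} apply.

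Next I would fix the side $I$ of $C$ and analyse the region between $\partial C$ and the cells. For $B\in M(C)$ let $O_B$ be the side of $\rho(\partial B)$ \emph{not} containing $\rho(\partial C)$. Using almost planarity together with maximal nestedness I would show the $O_B$ are pairwise disjoint open discs: two distinct cells of $M(C)$ are nested, and if one disc contained the other then one boundary would separate the other from $\rho(\partial C)$, contradicting that both cells are maximally nested. Thus $R:=I\setminus\bigcup_{B\in M(C)}\overline{O_B}$ is a connected, $2$-cell-free region whose frontier in $\BS^2$ consists of $\rho(\partial C)$, the circles $\rho(\partial B)$, and an accumulation set $A$ contained in $\rho(\OO(Z))$. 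The first genuine use of maximal nestedness is to show that no accumulation end sits \emph{inside} a cell: if $\rho(\omega)\in O_B$ for some accumulation end $\omega$ and some $B\in M(C)$, then the open set $O_B$ contains $\rho(\partial B')$ for infinitely many $B'\in M(C)$, and each such $\rho(\partial B')$ is separated from $\rho(\partial C)$ by $\rho(\partial B)$, contradicting the maximal nestedness of $B'$. So every accumulation end has its image on the fringe $A$ of the \emph{single connected} region $R$.

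The hard part will be to show that this fringe carries only one end. The plan is to argue by contradiction: suppose $\omega_1\neq\omega_2$ are accumulation ends. I would choose a finite cut of $Z^1$ separating $\omega_1$ from $\omega_2$ and disjoint from the finite cycle $\partial C$, and use \Lr{face sep} to replace it by a cycle $D$ of $Z^1$ with $\rho(D)$ separating $\rho(\omega_1)$ from $\rho(\omega_2)$, with $\partial C$ lying in the disc bounded by $\rho(D)$ that contains $\omega_2$. Infinitely many cells of $M(C)$ then have their boundaries in the other disc $\Delta_1$; picking such a $B$, maximal nestedness provides a path from $\partial B$ to $\partial C$ inside $\overline{R}$ meeting no cell interior, and this path must cross $D$. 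I would then exploit that $R$ is connected and $2$-cell-free with outer boundary the single cycle $\partial C$: the part of $R$ trapped in $\Delta_1$ is bounded on the shallow side only by the finite set $D\cap\overline{R}$ and on the deep side by circles $\rho(\partial B)$, so it is a copy of the original configuration but with outer boundary $D$ and with $\omega_1$ as its \emph{only} deep end. Running the cell-containment argument of the previous paragraph relative to $D$ should then force all the cells in $\Delta_1$ to accumulate at the single end $\omega_1$, and symmetrically for $\omega_2$; combining this with the connectedness of $R$ (which links both deep regions to the single cycle $\partial C$) I aim to conclude that $A$ is connected, whence $|A|\le 1$ because $\OO(Z)$ is totally disconnected. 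Here I would use \Lr{cut arc} to keep track of which side of $D$ arcs and rays land in, and \Lr{sep sum} to control which cycles separate $\omega_1$ from $\omega_2$.

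I expect the main obstacle to be exactly the gap between \emph{cycles} and \emph{cell boundaries}: maximal nestedness forbids a \emph{$2$-cell} boundary between $B$ and $C$, whereas the curve $D$ produced by \Lr{face sep} is only a cycle of $Z^1$, so the crossing of $D$ does not by itself contradict maximality. The feature I would lean on to close this gap is that $R$ is an infinite \emph{face} of the complex $Z$ bounded by a single facial walk: a two-ended accumulation would make this walk branch into two deep strands, and a branch point would have to be shielded from $\partial C$ by some cell of $M(C)$, contradicting that those cells are first-layer. Making "cannot branch without an intervening maximally nested cell" precise — i.e.\ proving that the connected cell-free region $R$ with finite outer boundary $\partial C$ has connected fringe — is the delicate step, and it is where the finiteness of $\partial C$, the local finiteness of $Z^1$, and covariance (via \Cr{cor hom S}) all have to be brought together.
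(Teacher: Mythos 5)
Your proposal stalls exactly where you say it does, and the gap is real. Everything up to the point where you must rule out two accumulation ends is fine and broadly consistent with the paper's setup, but the step you flag as ``delicate'' --- showing that the cell-free region $R$ cannot have a disconnected fringe --- is the entire content of the lemma, and your sketch for closing it (``a branch point would have to be shielded from $\partial C$ by some cell of $M(C)$'') is not an argument. As you yourself observe, maximal nestedness only forbids a \emph{2-cell boundary} from separating an element of $M(C)$ from $C$, whereas any separating curve produced by \Lr{end sep} or \Lr{face sep} is merely a cycle of $Z^1$; nothing in your outline converts the latter into the former, and the auxiliary claim that a connected cell-free region with finite outer boundary has connected fringe is neither obvious nor proved.

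The missing idea is homological, and it is what makes the paper's proof short: since $Z$ is a Cayley complex (hence simply connected), the cycle space of $Z^1$ is generated by the relator-induced cycles, and each relator-induced cycle \emph{is} the boundary of a 2-cell of $Z$. Concretely: suppose $\oo,\chi$ are two accumulation ends, extend $\rho$ to an embedding $\rho'$ of $|Z|$ via \Cr{FG emb k1}, and take a cycle $K$ of $Z^1$ with $\rho'(K)$ separating $\rho'(\oo)$ from $\rho'(\chi)$ (\Lr{end sep}). Write $E(K)=\sum_i E(R_i)$ with each $R_i$ a relator cycle. If some $\rho'(R_i)$ separated $\rho'(\oo)$ from $\rho'(\chi)$, then both of its sides would contain boundaries of cells of $M(C)$, so $\rho'(R_i)$ --- being a 2-cell boundary --- would separate one of them from $\rho(\partial C)$, contradicting maximal nestedness. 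Hence no $R_i$ separates the two ends, and \Lr{sep sum} (which you cite but never deploy on a specific generating family) forces $K$ not to separate them either, a contradiction. Without the passage through relator cycles your appeal to \Lr{sep sum} has nothing to act on. A smaller point: replacing $Z^1$ by the 3-connected supergraph of \Lr{ext 3cd} would break the correspondence between relator cycles and 2-cells that this argument needs; the paper avoids this by using \Lr{end sep} directly, which requires only finiteness of face-boundaries, not 2-connectedness.
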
 
\begin{proof} 	
Easily, we may assume that \fe\ $D\in Z^2$, $\rho(D)$ is a topological disc bounded by $\rho(\partial D)$. Thus if $\rho(Z^1)$ is given, then $\rho(D)$ is one of the two components of $\BS^2 \sm \rho(\partial D)$. Note that the almost planarity of $\rho$ is not affected if we modify it so as to map $\rho(D)$ to the other component of $\BS^2 \sm \rho(\partial D)$. Therefore, we may assume without loss of generality that 
\labtequ{nest I}{$\rho(D)\subset I$ \fe\ $D\in M(C)$.}

Suppose, to the contrary, that there are distinct ends $\oo,\chi\in \OO(Z)$ in the closure of $\bigcup M(C)$ in $|Z|$.
By \Cr{FG emb k1}, we may assume that $\rho$ extends to an embedding $\rho': |Z| \to \BS^2$. \mymargin{we can make this assumption part of the statement.}
Let $K$ be a cycle in $Z^1$ \st\ $\rho'(K)$ separates $\rho'(\oo)$ from $\rho'(\chi)$, which exists by \Lr{end sep}.
Write its edge-set $E(K)$ as a sum (\wrt\ addition in the cycle space of $Z^1$) $E(K)= \sum_{1\leq i\leq k} E(R_i)$, where each $R_i$ is a cycle  of $Z^1$ induced by a relator in the presentation defining $Z$. (We can choose $R_i$ to be cycle rather than a closed walk because $Z$ is standard.) 

We claim that \fe\ cycle $R$ of $Z^1$ induced by a relator, $\rho'(R)$ does not separate $\rho'(\oo)$ from $\rho'(\chi)$. For if it does, then both components $R_1,R_2$ of $\BS^2 \sm \rho'(R)$ contain infinitely many images of elements of $M(C)$. In particular each of $R_1,R_2$ contains a boundary of an element of $M(C)$ not equal to $R$. This contradicts the fact that the elements of $M(C)$ are maximally nested in $I$, as $\rho'(R)$ separates one of them from $C$.

Combining this claim with \Lr{sep sum} implies that $\rho'(K)$ does not separate $\rho'(\oo)$ from $\rho'(\chi)$, and we have reached a contradiction to the existence of $\oo,\chi$.


\comment{
Suppose $M(C)$ is infinite. Since $Z$ is finitely presented by our assumption, \ti\ an infinite subset $M'\subseteq M(C)$ all elements of which are induced by the same relator. Let $\g=Z^1$ be the 1-skeleton of $Z$.

For  $D\in M(C)$, define the interior $D_I$ of $D$ to be the set of vertices $v$ of \g \st\ $\rho(\partial D)$ separates $\rho(v)$ from  $\rho(\partial C)$ in  in $\BS^2$.  
The set of all other vertices of \g is the \defi{exterior} $D_E$ of $D$. 

By the definition of $M'$, \fe\ $B,D\in M'$ \ti\ an element $g$ of the group $\Gamma$ of $Z$ \st\ $gB=D$, and in particular $g \partial B = \partial D$. Since $\rho(G)$ is \cova, we have either $gB_I= D_I$ or $gB_I= D_E$ whenever $gB=D$. Indeed, the former occurs whenever the action of $g$ on $G$ preserves the cyclic ordering of the edges incident with a vertex, and the latter  occurs whenever  $g$ reverses that ordering.\mymargin{Maybe explain rel'n between consistency and spin.} Thus we can find an infinite subset $M'' \subseteq M'$ \st\ $gB_I= D_I$ \fe\ $B,D\in M''$ and at least one $g=: g^B_D \in \Gamma$.

Pick $B \in M''$, and a \pth{\partial C}{\partial B} $P$ in \G. As $M''$ is infinite, and $\Gamma \act G$ is a free action, we can find $D \in M''$ \st\ $g^B_D (P \cup C) \cap C = \emptyset$. It follows from the choice of $g^B_D$ that $g^B_D P$ lies in $D_E$, and so $g^B_D C$ also lies in $D_E$. On the other hand, $g^B_D C$ lies in $I$ by the choice of $D$. These two facts combined mean that $g^B_D C$ separates $D$ from $C$, contradicting the assumption that $D\in M(C)$ is maximally nested in $I$.
}

\end{proof}

\begin{lemma}\label{nest S2}
Let   $\rho: Z \to \BS^2$ be an almost planar map of a standard, simplified Cayley complex $Z$, \st\ every face boundary of $Z^1$ bounds a 2-cell of $Z$, and $\rho(Z^1)$ extends to an embedding of $|Z^1|$. Let  $I$ be a side of a 2-cell $C$ of $Z$, and let $M$ be the set of 2-cells of $Z$ maximally nested in $I$. If $M$ is finite, then $C \cup \bigcup M$ is homeomorphic to $\BS^2$, and if $M$ is infinite, then $C \cup \bigcup M$ is homeomorphic to $\R^2$.
\end{lemma}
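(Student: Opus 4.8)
The plan is to use the almost planar map $\rho$ to identify the space $S:=\overline{C}\cup\bigcup_{B\in M}\overline{B}$ (the union of the closed $2$-cells involved) with an explicit subspace of $\BS^2$, and then to recognise its homeomorphism type directly. First I would normalise $\rho$ exactly as in the proof of \Lr{fin nest}: I may assume that each closed $2$-cell $\overline{D}$ is carried by $\rho$ homeomorphically onto the closed disc bounded by the Jordan curve $\rho(\partial D)$, and that the open disc $\rho(D)$ may be taken to be either side of $\rho(\partial D)$ without affecting almost planarity. Since $M$ is determined by the cell boundaries alone (through the separation relation) and not by these side-choices, I may fix $M$ first and then use this freedom to arrange that $\rho(C)$ is the side of $\rho(\partial C)$ opposite to $I$ — so that $\rho(\overline{C})=\BS^2\setminus I$ — and that for each $B\in M$ the disc $\rho(B)$ is the side of $\rho(\partial B)$ lying inside $I$ (possible since $\rho(\partial B)\subset\overline{I}$). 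Finally, by \Cr{FG emb k1} I may assume $\rho$ extends to an embedding $\rho'$ of $|Z^1|$, so that the faces of the plane graph $\rho'(Z^1)$ genuinely tile $\BS^2$.

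The core of the argument is the claim that the far-discs $\{\rho(\overline{B}):B\in M\}$ have pairwise disjoint interiors and together cover $\overline{I}$ up to at most one point. Disjointness is immediate: two maximally nested cells have boundaries that, by almost planarity, are disjoint or cobound a single $2$-gon, and since maximality forbids one disc from containing the other, their interiors are disjoint. For coverage, take $x\in I$ off $\rho'(Z^1)$; it lies in a face $f$ whose boundary bounds a $2$-cell by hypothesis. Among all $2$-cells $D$ with $\rho(\partial D)\subset\overline{I}$ whose far side (the component of $\BS^2\setminus\rho(\partial D)$ avoiding $\partial C$) contains $x$, I would take one, $B$, whose far-disc is inclusion-maximal; such cells are nested (their far-discs through $x$ form a chain), and local finiteness of $Z^1$ together with the embedding of $|Z^1|$ prevents an infinite ascending chain accumulating at the finite cycle $\partial C$, so a maximal one exists. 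Then $B\in M$ (any cell separating $\partial B$ from $\partial C$ would have a strictly larger far-disc containing $x$), and $x\in\rho(\overline{B})$. Hence every face inside $I$ is covered.

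By \Lr{fin nest}, $M$ is either finite or has exactly one accumulation point in $|Z|$. In the finite case the cells accumulate nowhere, so the covered set is all of $\overline{I}$ and $A:=\overline{I}\setminus\bigcup_{B\in M}\rho(\overline{B})=\emptyset$; in the infinite case the single accumulation end maps under $\rho'$ to the unique uncovered point $a\in I$, so $A=\{a\}$. Either way $\rho(S)=\rho(\overline{C})\cup\bigcup_{B\in M}\rho(\overline{B})=(\BS^2\setminus I)\cup(\overline{I}\setminus A)=\BS^2\setminus A$. Because the closed cells making up $S$ are carried homeomorphically by $\rho$ and meet only along the embedded graph $\rho'(Z^1)$, the restriction $\rho|_S$ is a continuous bijection of $S$ onto $\BS^2\setminus A$. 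If $M$ is finite then $S$ is compact and $A=\emptyset$, so $\rho|_S$ is a homeomorphism onto $\BS^2$, since a continuous bijection from a compact space to a Hausdorff space is a homeomorphism. If $M$ is infinite then $A=\{a\}$ and $\rho|_S$ is proper — a compact subset of $\BS^2\setminus\{a\}$ meets only finitely many cells, hence has compact preimage — and a proper continuous bijection onto the locally compact Hausdorff space $\BS^2\setminus\{a\}\cong\R^2$ is a homeomorphism, giving $S\cong\R^2$.

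The main obstacle I anticipate is the covering claim, and specifically the passage from a face of $I$ to a maximally nested cell covering it: one must verify that the inclusion-maximal far-disc through a given point is attained by a genuine element of $M$, which is where local finiteness and the embedding of $|Z^1|$ enter, and one must control the boundary interactions at $\partial C$ together with the $2$-gon (involution) case of almost planarity so that the far-discs really do fit together along $\rho'(Z^1)$ into a tiling of $\overline{I}\setminus A$. Everything after that is bookkeeping plus the two standard facts about continuous bijections recalled above.
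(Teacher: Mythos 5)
Your proposal is correct and follows essentially the same route as the paper: your central claim that the closed discs of the maximally nested cells tile $\cls{I}$ with pairwise disjoint interiors, missing at most the image of the single accumulation end supplied by \Lr{fin nest}, is precisely the paper's claim that the faces of the 1-skeleton $H$ of $C\cup\bigcup M$ correspond bijectively to the boundaries of the cells in $\{C\}\cup M$, and both your disjointness argument (maximality plus almost planarity) and your coverage argument (existence of an inclusion-maximal nested cell through a given point, via local finiteness and the embedding of $|Z^1|$) match the paper's, at the same level of detail. The only divergence is the final identification step, where the paper caps off all faces of a finite (respectively one-ended) plane graph to obtain $\BS^2$ (respectively $\BS^2$ minus a point) while you use a continuous bijection together with compactness, respectively properness; this difference is cosmetic.
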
 
\begin{proof} 	
Let $H$ be the 1-skeleton of $C \cup \bigcup M$, and notice that $H$ is a subgraph of $Z^1$, and therefore planar.  We claim that 
\labtequ{Hfaces}{the boundary of each face $F$ of $H$ (with respect to the embedding $\rho(H)$) coincides with the boundary of exactly one 2-cell in $\{C\} \cup M$.}
To see this, note first that $\partial C$ bounds a face of $H$ by the definitions.
Let $B\in M$, and suppose $\partial B$ does not bound a face of $H$. This means that there is an edge $e$ of $H$ inside $\rho(B)$, and so \ti\ some $A\in M$ containing $e$ in its boundary. Then $A,B$ will contradict the almost planarity of $\rho$ unless $\rho(\partial B)$ separates $\rho(\partial A)$ from $\rho(\partial C)$. The latter however  
contradicts the assumption that $A$ is maximally nested in $I$. Thus $\partial B$ must bound a face of $H$. 

Conversely, consider a face  $F$ of $H$. If $F$ is a face of $Z^1$ too, then 
$\partial F$ bounds a 2-cell of $Z$ by our assumptions. Otherwise, pick an edge $e\in E(Z^1)$ with $\rho(e)\subset F$. Note that $e$ lies on the boundary of some 2-cell $A$ of $Z$ \iff\ its label appears in at least one of the defining relators. 

If this is the case, let $A'$ be the 2-cell  maximally nested in $I$ \st\ $\rho(A) \subseteq \rho(A')$, which exists because there are only finitely many 2-cells $A'$ with $\rho(A) \subseteq \rho(A') \subseteq I$. Then $A'\in M$, and so $\partial A'$ has no edges in $F$, and $\rho(A')$ meets $F$. This implies that $\partial A' = \partial F$. 

If, on the other hand, $e$ does not lie on the boundary of any 2-cell, then at least one of its endvertices $v$ does not lie on $\partial F$. This is true because an edge of the  \Cg\ $Z^1$ that lies on no relator cycle separates $Z^1$, and so it cannot have both end-vertices on the cycle $\partial F$. In this case, we let  $A$ be a 2-cell of $Z$ incident with $v$ and repeat the above arguments to find $A'\in M$ with $\partial A' = \partial F$. 

Since our Cayley complex is simplified, no other $K\in M$ can satisfy  $\partial K = \partial F$. 

This completes the proof of \eqref{Hfaces}. If $M$ is finite, then $H$ is a finite plane graph, hence attaching a 2-cell along each of its faces yields a homeomorph $S$ of $\BS^2$. As $Z$ is standard, $S$ is  homeomorphic to $C \cup \bigcup M$ as desired. If $M$ is infinite, then $H$ has exactly one end by \Lr{nest S2}.  Recall that we are assuming that $\rho(Z^1)$ extends to an embedding  $\sigma: |Z^1| \to \BS^2$. This induces an embedding  $\sigma': |H| \to  \BS^2$, and attaching, as above, a 2-cell along each of the faces of $\sigma'$ yields a homeomorph $S$ of $\BS^2 \sm \sigma'(\oo) \isom \R^2$, where \oo\ is the unique end of $H$, and again $S$ is homeomorphic to $C \cup \bigcup M$.

\end{proof}

We can now prove the main result of this section.
\begin{proof}[Proof of \Tr{act R3}]
Let \g be a \covaly\ planar \Cg\ of $\Gamma$ with no infinite face-boundaries, as provided by \ref{T iv} of \Tr{tfae}. 
Let $\sigma: |G| \to \BS^2$ be an embedding \st\ $\sigma(G)$ is \cova, provided by \Cr{FG emb k1}.
Let 
$Z'$ be an almost planar Cayley complex with $G= Z^1$, and let $\rho: Z' \to \BS^2$ be an almost planar map, provided by \Lr{pla pres}, \st\ $\rho(G)$ coincides with $\sigma(G)$. 
Recall that we may assume that $Z'$ is standard, and, by adding the corresponding relators to the presentation defining $Z'$ if necessary, that 
\labtequ{ass}{every face boundary of \g \wrt\ $\rho$ bounds a 2-cell of $Z'$.}
Let $Z$ be the corresponding simplified Cayley complex.
 
We now modify the almost planar embedding $\rho(Z)$ of $Z$ into an embedding $\tau: Z \to \D^3$, where $\D^3$ is the closed Euclidean ball of radius 1 in $\R^3$, \st\ $\tau$ coincides with $\rho$ when restricting both maps to $G$, where we think of $\BS^2$ as the boundary of $\D^3$. Let $\kreis{Z}^2$ denote the set of interiors of 2-cells of $Z$.  To define this $\tau$, we just need to specify the image $\tau(C)$ of each $C \in \kreis{Z}^2$. By the definition of the almost planarity of $\rho$, we can easily choose $\tau$ so that $\tau(C) \cap \BS^2 = \emptyset$, and $\tau(C) \cap \tau(D)= \emptyset$ for each two distinct  $C,D \in \kreis{Z}^2$. Indeed, we can define $\tau(C_1), \tau(C_2), \ldots$ inductively for any enumeration \seq{C}\ of $\kreis{Z}^2$, exploiting the fact that the circle $\rho( \partial(C_i) ) \subset \BS^2$ does not cross $\rho( \partial(C_j) )$. Moreover, by making $\tau(C_i)$ sufficiently small (e.g.\ contained in a ball of radius $10 diam( \rho( \partial(C_i)))$ around $\rho( \partial(C_i))$), we can assume that the images $\tau(C_i)$ have no accumulation points in the interior of $\D^3$.

Since $Z$ is just a locally finite 2-complex, it is easy to see that our $\tau$ is an embedding of $Z$ into $\D^3$. Our plan is to extend $Z$, and our action $\Gamma \to Z$, to a 3-complex homeomorphic to $\D^3 \sm \rho(\OO(G))$.
Let us first consider the case where the first alternative of \Lr{fin nest} holds \fe\ $C\in Z^2$, i.e.\ $M(C)$ is always finite. We will later extend our construction to the general case. 

Under this assumption,  \Lr{nest S2} says that \fe\ 2-cell $C$ of $Z$, and each of the two sides $I_1,I_2$ of  $C$, the set $M_i$ of 2-cells  maximally nested in $I_i$ together with $C$ form a homeomorph $S_i= S_i(C)$ of $\BS^2$ unless $\partial C$ bounds a face of \g in the embedding $\sigma$. In the latter case, $M_i$ is empty for one of the sides, $I_1$ say, of $C$. Letting $Z_F$ be the 2-complex obtained from $Z$ by 
adding the faces of  \g as 2-cells (which we can since $\partial F \subset Z^1$ \fe\ face $F$ of \G), we observe that $C \cup F$ is homeomorphic to $\BS^2$ whenever $\partial C$ bounds the face $F$ of \G, and we let $S_1(C)= C \cup F$ in this case. Note that if $D \in Z^2$ lies on the boundary of $S_i(C)$, then $C$ lies on the boundary of $S_{2-i}(D)$ by the definitions. This implies that each $D \in Z^2$ lies on the boundary of exactly two elements of $\cs:= \bigcup_{C\in Z^2} \{S_1,S_2\}$, where we also used \eqref{ass}.

Let $T$ be the 3-complex obtained from $Z_F$ by adding a 3-cell $T(S)$ with simple boundary $S$ \fe\ $S\in \cs$. Then $\tau$ extends into an embedding $\phi$ of $T$ into $\D^3$, because $S$ bounds a homeomorph $S'$ of $\R^3$ in $\D^3$  \fe\ $S\in \cs$ by (a rather easy version of) the generalised Schoenflies theorem \cite{BrownGS, MazurGS}, and we can let $\phi(T(S))= S'$. Easily, $\Gamma \act Z$ extends to a cellular action $\Gamma \act T$. Then $\Gamma \act T$ is (faithful and) \pd, because every cellular action on a CW-complex with finite stabilisers of cells is \pd\ \cite[Theorem~9,~(2)=(10)]{KapPD}.  

Moreover, by the construction of $\phi$, we have $Y:=\phi(T)= \D^3 \sm \sigma(\OO(G))$, and so we can think of $\Gamma \act T$ as an action $\Gamma \act Y$ since $\phi$ is an embedding, and so $Y$ is homeomorphic to $T$ (here we used our assumption that $\tau(C_i)$ have no accumulation points in the interior of $\D^3$). By restricting that action to $\D^3 \sm \BS^2$ we thus obtain a faithful, \pd\ action on the interior of $\D^3$, hence on its homeomorph $\R^3$.

We remark that $\Gamma \act Y$ is co-compact, but its restriction to $\D^3 \sm \BS^2$ is not. 
\medskip

We now consider the general case, where $M(C)$ in \Lr{fin nest} is possibly infinite for some of the 2-cells $C\in Z^2$. For every such $C$, and every side $I_i$ of $C$ for which $M(C)= M_i(C)$ is infinite, \Lr{fin nest} yields a unique end $\oo_i(C)$ accumulating $M_i(C)$. We are going to use  $\oo_i(C)$ in order to triangulate the interior of the sphere formed by $\tau( \bigcup M_i(C))$ and $\rho( \oo_i(C))$. For this, given any edge $e$ in the boundary of an element of $M_i(C)$, we add to $Z_F$ two new 1-cells $e_0,e_1$, each joining a distinct  endvertex of $e$ to $\oo_i(C)$; we also add $\oo_i(C)$ to $Z_F$ as a 0-cell. In addition, we add to $Z_F$ a 2-cell bounded by the triangle $e,e_0,e_1$. We let $W$ denote the 2-complex obtained from $Z_F$ after adding all those cells (for every side $I_i$ of a $C\in Z^2$ with infinite $M_i(C)$).

Note that for every $D\in \{C\} \cup M_i(C)$ where $M_i(C)$ is infinite, the set $N \subset W \sm Z$ of newly added 2-cells sharing an edge with $D$ combined with $D$ forms a homeomorph $S_D$ of $\BS^2$. We define $\cs$ as above, except that we now let $S_i(C) = S_C$ whenever one or both sides $I_1,I_2$ of $C\in Z^2$ has infinite $M_i(C)$. As above, we construct a 3-complex $T$  by adding a 3-cell $T(S)$ with simple boundary $S$ \fe\ $S\in \cs$. Easily, we can extend $\tau$ into an embedding $\tau': W \to \D^3$, and from there  to an embedding $\phi$ of $T$ into $\D^3$, with image $Y=\phi(T)= \D^3 \sm \sigma(\OO(G) \sm \OO')$, where $\OO'$ is the set of 0-cells $\oo_i(C)$ in $W^0 \sm Z_F^0$. The extension $\Gamma \act T$ of our action is still cellular, but it now fails to have finite vertex stabilisers because of the 0-cells in $\OO'$.  In fact, $\Gamma \act T$ is not \pd\ because any point in $\OO'$ accumulates orbits. Therefore, we restrict our action to the topological subspace $T' \subseteq T$ obtained from $T$ by removing $\OO'$.  This $T'$ is not a cell complex anymore, so we need a different argument to prove that $\Gamma \act T'$ is \pd. But this is not hard: we apply \Lr{cover pd} as in the proof of \Lr{pd ext}. The cover \cu\ of $T$ we choose for this comprises the sets $U_v, v\in V(G)$ obtained as the union of all open cells of any dimension in $T'$ that have a vertex $v$ of \g in their boundary. Restricting $\Gamma \act Y$ to $\D^3 \sm \BS^2$ again we obtain the desired action.

\end{proof}

\acknowledgements{I thank Caroline Series for suggesting studying the connection between planar groups and Kleinian groups some years ago. I thank Misha Kapovich for suggesting using Maskit's \Tr{thm Maskit} to prove \Cr{Kleinian}. I thank Matthias Hamann for the proof of \Lr{cutspace}, and Max Pitz for numerous remarks. 

I am very grateful to Brian Bowditch for many helpful discussions.} 

\comment{
\begin{theorem}\label{}
\end{theorem} 
\begin{proof} 	

\end{proof}
}

\bibliographystyle{plain}
\bibliography{../collective}

\end{document}